\newcolumntype{M}[1]{>{\centering\arraybackslash}m{#1}}
\newcolumntype{N}{@{}m{0pt}@{}}
\newtheorem{theorem}{Theorem}[section]
\newtheorem{prop}[theorem]{Proposition}
\newtheorem{lemma}[theorem]{Lemma}
\newtheorem{corollary}[theorem]{Corollary}
\newtheorem{remark}[theorem]{Remark}
\def\liminf{\mathop{\rm lim\,inf}\limits}
\def\limsup{\mathop{\rm lim\,sup}\limits}
\def\one{\mathbf{1}}
\def\N{\mathbb{N}}
\def\Z{\mathbb{Z}}
\def\R{\mathbb{R}}
\def\EE{\mathbb{E}}
\def\PP{\mathbb{P}}
\def\E{\mathcal{E}}
\def\HH{\mathcal{H}}
\def\I{\mathcal{I}}
\def\T{\mathcal{T}}
\def\L{\mathcal{L}}
\def\m{\mathtt{m}}
\def\r{\mathtt{r}}
\def\h{\mathtt{h}}
\def\F{\mathfrak{F}}
\def\eps{\varepsilon}
\def\ess{\mathfrak{S}}
\def\LL{\mathfrak{L}}
\DeclareMathOperator{\Var}{\textnormal{Var}}
\newcommand{\st}{\hspace{.05em}:\hspace{.05em}}
\newenvironment{customthm}[1]
{\innercustomthm}
{\endinnercustomthm}
\theoremstyle{definition}
\DeclareRobustCommand{\cev}[1]{\reflectbox{\ensuremath{\vec{\reflectbox{\ensuremath{#1}}}}}}
\begin{document}

\title[Phase transition in a soliton cellular automaton]{Double jump phase transition in a soliton cellular automaton}

\author{Lionel Levine}
\address{Lionel Levine, Department of Mathematics, Cornell University, Ithaca, NY 14853.}
\email{\texttt{levine@math.cornell.edu}}

\author{Hanbaek Lyu}
\address{Hanbaek Lyu, Department of Mathematics, University of California, Los Angeles, CA 90095.}
\email{\texttt{colourgraph@gmail.com}}

\author{John Pike}
\address{John Pike, Department of Mathematics, Bridgewater State University, Bridgewater, MA 02324.}
\email{\texttt{john.pike@bridgew.edu}}

\date{\today}

\keywords{box-ball system, phase transition, condensation, excursion operator, birth-death chain, Motzkin path, Galton-Watson forest, 
Brownian motion, random stack-sortable permutation}
\subjclass[2010]{37K40, 60F05}

\begin{abstract}
In this paper, we consider the soliton cellular automaton introduced in \cite{takahashi1990soliton} with a random initial 
configuration. We give multiple constructions of a Young diagram describing various statistics of the system in terms of 
familiar objects like birth-and-death chains and Galton-Watson forests. Using these ideas, we establish limit theorems 
showing that if the first $n$ boxes are occupied independently with probability $p\in(0,1)$, then the number of solitons is of 
order $n$ for all $p$, and the length of the longest soliton is of order $\log n$  for $p<1/2$, order $\sqrt{n}$ for $p=1/2$, 
and order $n$ for $p>1/2$. Additionally, we uncover a condensation phenomenon in the supercritical regime: 
For each fixed $j\geq 1$, the top $j$ soliton lengths have the same order as the longest for $p\leq 1/2$, whereas all but the 
longest have order $\log n$ for $p>1/2$. As an application, we obtain scaling limits for the lengths of the $k^{\text{th}}$ 
longest increasing and decreasing subsequences in a random stack-sortable permutation of length $n$ in terms of random walks  
and Brownian excursions. 
\end{abstract}

\maketitle

\section{Introduction}
\label{Introduction}

In 1990, Takahashi and Satsuma proposed a $1+1$ dimensional cellular automaton of filter type called the 
\emph{soliton cellular automaton}, also known as the \emph{box-ball system} \cite{nagai1999soliton,takahashi1990soliton}. 
It is defined as a discrete-time dynamical system $\left(X_{s}\right)_{s\ge 0}$ whose states are binary sequences 
$X_{s}:\N\rightarrow\{0,1\}$ with finitely many $1$'s. 
We may think of the states as configurations of balls in boxes where box $k$ contains a ball at stage $s$ if $X_{s}(k)=1$ and is empty 
if $X_{s}(k)=0$. The update rule $X_{s}\mapsto X_{s+1}$ is defined as follows: At the beginning of stage $s$, each ball has been moved 
a total of $s$ times. To reach stage $s+1$, successively move the leftmost ball which has been moved a total of $s$ times to the first empty 
box on its right, continuing until all balls have been moved. Alternatively, at each stage $s\geq 0$ a `carrier' starts at the origin and sweeps 
rightward to infinity. Each time she encounters an occupied box, she \emph{pushes} the ball to the top of her stack. Each time she encounters 
an empty box and her stack is nonempty, she \emph{pops} any ball from her stack into the box. In keeping with this picture, we will refer 
to the stages of the box-ball system as \emph{sweeps} henceforth.

As a concrete example, the system initially having balls in boxes $2,3,5,6,7,11$  evolves through sweep $s=3$ as
\[
 \setlength\arraycolsep{3pt}
\begin{array}{*{2}{r|lllllllllllllllllllllllll@{\ }}}
            s=0 & & 0 & 1 & 1 & 0 & 1 & 1 & 1 & 0 & 0 & 0 & 1 & 0 & 0 & 0 & 0 & 0 & 0 & 0 & 0 & 0 & 0 & 0 & \ldots \\[\jot]
            1 & & 0 & 0 & 0 & 1 & 0 & 0 & 0 & 1 & 1 & 1 & 0 & 1 & 1 & 0 & 0 & 0 & 0 & 0 & 0 & 0 & 0 & 0 & \ldots \\[\jot]
            2 & & 0 & 0 & 0 & 0 & 1 & 0 & 0 & 0 & 0 & 0 & 1 & 0 & 0 & 1 & 1 & 1 & 1 & 0 & 0 & 0 & 0 & 0 & \ldots \\[\jot]
           3 & & 0 & 0 & 0 & 0 & 0 & 1 & 0 & 0 & 0 & 0 & 0 & 1 & 0 & 0 & 0 & 0 & 0 & 1 & 1 & 1 & 1 & 0 & \ldots \\
\end{array}
\]
\vspace{4pt}

In this model, a (non-interacting) \emph{soliton of length $k$} is defined to be a string of $k$ consecutive $1$'s followed 
by $k$ consecutive $0$'s. During one sweep, such a soliton travels to the right at speed $k$. The physical interpretation is that 
of a traveling wave with velocity equal to its wavelength. 
If a $k$-soliton precedes a $j$-soliton with $j<k$, then the two will eventually collide, resulting in interference. The subsequent states of 
the system depend on the congruence class of their initial distance modulo their relative speed, $k-j$, but solitons are never created or destroyed in 
the course of these interactions. The case of three or more interacting solitons can be described similarly \cite{takahashi1990soliton}. It is easy to see 
that since we have finitely many balls initially, after some finite time the system consists of non-interacting solitons whose lengths are nondecreasing 
from left to right. We will call such a configuration \emph{stable}. This final macrostate of the system can be encoded in the Young diagram 
$\Lambda(X_{0})$ having $j^{\text{th}}$ column equal in length to the $j^{\text{th}}$ longest soliton.  

In this paper, we start the soliton cellular automaton from a random initial configuration and study the limiting shape of the 
resulting Young diagram. We have two parameters, $n\in\N$ and $p\in (0,1)$. Let $X^{n,p}$ 
be a random coloring of $\N$ so that each site in $[1,n]$ is $1$ with probability $p$ and $0$ with probability $1-p$, 
independently of all others, and all sites in $(n,\infty)$ are $0$. Let $\Lambda^{n,p}=\Lambda(X^{n,p})$ be the corresponding random 
Young diagram and denote the lengths of its $i^{\text{th}}$ row and $j^{\text{th}}$ column by $\rho_{i}(n)$ and $\lambda_{j}(n)$, respectively. 
(Thus $\lambda_{j}(n)$ gives the length of the $j^{\text{th}}$ longest soliton and $\rho_{i}(n)$ the number of solitons of length at 
least $i$.) We are going to show that each fixed row has order $n$ for all values of $p$, but the column lengths vary drastically according to whether 
$p$ is less than, equal to, or greater than $1/2$. The asymptotics of the rows and columns of $\Lambda^{n,p}$ are summarized in the following table, 
for which Theorem~\ref{theorem:rows} proves the $\rho$ entries, and Theorem~\ref{theorem:columns} proves the $\lambda$ entries.
For the precise meaning of the Landau notation employed, see Subsection \ref{subsection:Landau}.

\begin{table}[h]
	\centering
	\includegraphics[width=0.8 \linewidth]{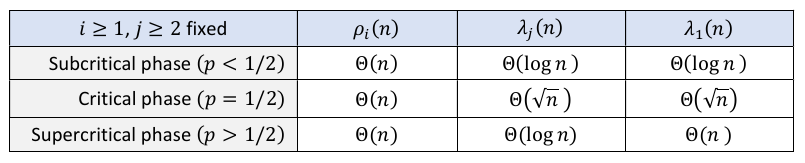}
	\vspace{0.3cm}
	\caption{Double jump phase transition for the order of the longest $j$ solitons ($j$ fixed as $n \to \infty$) in the random box-ball system. 
	All entries are up to constant factors that do not depend on $n$. In the sub- and supercritical phases the $\lambda_j$ are concentrated, 
	and the constant factor depends only on $p$ (and not on $j$).  In the critical phase the $\lambda_j$ are not concentrated, and the constant 
	factor depends on $j$. 
	}
	\label{fig:table}
\end{table}

Erd\H{o}s and R\'{e}nyi coined the term \emph{double jump} to describe the emergence of a giant component in the sparse random graph with $n$ 
vertices, each pair independently joined by an edge with probability $c/n$, where $c>0$ is a parameter. The analogy between random graph components 
and box-ball solitons becomes apparent if we take $c=p/(1-p)$. Then with high probability, all connected components of the Erd\H{o}s-R\'{e}nyi graph are 
of size $O(\log n)$ for $p<1/2$; components of size $\Theta(n^{2/3})$ emerge at $p=1/2$; and for $p>1/2$, the largest component is of size $\Theta(n)$ 
while all the rest have size $O(\log{n})$ \cite{ErdRen}. Except for the exponent $2/3$ (which becomes $1/2$) this is exactly the behavior of the soliton 
lengths in the box-ball system as summarized in the last two columns of Table~\ref{fig:table}.

\subsection{Related work}
\label{subsection:Related work}

There have been some exciting recent developments involving the box-ball system with a bi-infinite random initial configuration. A central question is to 
understand the invariant measures on $\{0,1\}^{\Z}$ under the box-ball dynamics. Ferrari, Nguyen, Rolla, and Wang \cite{ferrari2018soliton} showed that 
the Bernoulli product measure with density $p<1/2$ is invariant and provided a recipe for constructing additional invariant measures based on a soliton 
decomposition of box-ball configurations. Croydon, Kato, Sasada, and Tsujimoto \cite{croydon2018dynamics} found sufficient conditions for invariance using 
Pitman's $2M-X$ transformation and considered extending the box-ball system from $\Z$ to $\R$. See the references for more details.

\subsection{Notation}
\label{subsection:Landau}

We adopt the notation $\R^{+}=[0,\infty)$, $\N=\{1,2,3,\ldots\}$, and $\N_{0}=\N\cup\{0\}$ throughout. 
We employ the Landau notation $O(\cdot),\, \Omega(\cdot),\, \Theta(\cdot)$ in the sense of stochastic boundedness. That is, given 
$\{a_{n}\}_{n=1}^{\infty}\subseteq\R^{+}$ and a sequence $\{W_{n}\}_{n=1}^{\infty}$ of nonnegative random variables, we say that 
$W_{n}=O(a_{n})$ if for every $\eps>0$, there is a $C\in(0,\infty)$ such that $\PP\{W_{n}>Ca_{n}\}<\eps$ for all $n$. We say that 
$W_{n}=\Omega(a_{n})$ if for every $\eps>0$, there is a $c\in(0,\infty)$ such that $\PP\{W_{n}<ca_{n}\}<\eps$ for all $n$, and we say 
$W_{n}=\Theta(a_{n})$ if $W_{n}=O(a_{n})$ and $W_{n}=\Omega(a_{n})$. The constants $c,C$ may depend on $p$ and $\eps$ but not $n$.

\subsection{Main results} 
\label{subsection:Statement}

Fix $p\in(0,1)$, and let $\xi_{1},\xi_{2},\ldots$ be a sequence of i.i.d. random variables
with law $\PP\{\xi_{1}=1\}=p$ and $\PP\{\xi_{1}=-1\}=1-p$. Define $X^{p}\in\{0,1\}^{\N}$ by
\[
X^{p}(k) =\one\{\xi_{k}=1\}, 
\]
and for each $n\in\N$, set $X^{n,p}=X^{p}\one_{[1,n]}$. The interpretation is that $X^{n,p}$ corresponds to an arrangement of balls 
in boxes where boxes $1,\ldots,n$ are each occupied independently with probability $p$, and boxes $n+1,n+2,\ldots$ are empty. 

For each fixed $n\geq 1$ and $p\in (0,1)$, we consider the box-ball system $(X_{s})_{s\geq 0}$ with the random initial configuration 
$X_{0}=X^{n,p}$. Recall that the soliton lengths are denoted by $\lambda_{1}(n)\geq\lambda_{2}(n)\geq\cdots$. 
This information can be summarized by the Young diagram $\Lambda^{n,p}$ whose $j^{\text{th}}$ column has length $\lambda_{j}(n)$. 
The length of its $i^{\text{th}}$ row, $\rho_{i}(n)$, equals the number of solitons in the system having length at least $i$. 
In particular, $\rho_{1}(n)$ gives the total number of solitons. 

Many properties of this Young diagram can be described in terms of the simple random walk $\left\{ S_{k}\right\} _{k=0}^{\infty}$ 
defined by $S_{0}=0$ and $S_{k}=\xi_{1}+\cdots+\xi_{k}$. Our first result shows that the $i$ longest rows are of order $n$ for any 
$p\in (0,1)$.

\begin{customthm}{1}
\label{theorem:rows}
Let $X^{n,p}$ and $S_{k}$ be as above. Then the following statements hold.
\begin{description}[leftmargin=!,labelwidth=\widthof{\bfseries [(ii)]}]
	\item[(i)]{(SLLN for rows)} 
        Let $\varsigma=\inf\{k>0\st S_{k}=0\}\in\N\cup\{\infty\}$ be the first return time of $S_{k}$ to $0$. Then for any fixed $i\geq 1$,
		\[
		\frac{\rho_{i}(n)}{n}\rightarrow \PP\Big\{\max_{0\leq k \leq \varsigma} S_{k}=i\Big\}>0 \;\;\; \text{ a.s.} 
		\;\;\; \text{as $\; n\rightarrow \infty$}.
		\]
	\item[(ii)]{(CLT for the first row)} 
		\[
		\frac{\rho_{1}(n)-np(1-p)}{\sqrt{np(1-p)[1-3p(1-p)]}}\Rightarrow Z 
		\]
	where $Z\sim\mathcal{N}(0,1)$, the standard normal distribution.
\end{description}	
\end{customthm}

Denote by $C(\R)$ the space of continuous functions $f:\R\rightarrow\R$ endowed with the topology of uniform convergence on compact sets, 
and let $C_{0}^{+}(\R)$ be the subspace of $C(\R)$ consisting of nonnegative compactly supported functions $f$ such that 
$f\equiv 0$ on $(-\infty,0]$. For any closed interval $I\subseteq \R$ containing $0$, denote by $C(I)$ and $C_{0}^{+}(I)$ the space of restrictions 
$f|_{I}$ where $f\in C(\R)$ and $f\in C_{0}^{+}(\R)$, respectively. For $b\in I$, define the operator 
$\E_{b}:C(I)\rightarrow C(I)$ by 
\[
\E_{b}(f)(t) = f(t) - \min_{ b\wedge t \leq s \leq b\vee t } f(s),
\]
where $y\wedge z=\min(y,z)$ and $y\vee z=\max(y,z)$. We call $b$ the \emph{pivot} of $\E_{b}$. 
Define $\m:C_{0}^{+}(I)\rightarrow \R^{+}$ by $\m(g)=\sup\{x\in I\st g(x)=\max(g)\}$, the location of the rightmost global maximum of $g$. 
Finally, define the \emph{excursion operator} $\E$ on $C_{0}^{+}(I)$ by $\E(g)=\E_{\m(g)}(g)$. 
See Figure \ref{Young diagram:column} for an illustration.

We now state the main result of the paper.

\begin{customthm}{2}
\label{theorem:columns}
Let $X^{n,p}$ be as above and set $\theta=(1-p)/p$. Let $\lambda_{j}(n)$ denote the $j^{\text{th}}$ longest soliton length. 
\begin{description}[leftmargin=!,labelwidth=\widthof{\bfseries [(iii)]}]
	\item[(i)] (Subcritical phase) For $p<1/2$, $\lambda_j(n)$ is concentrated around  
	$\mu_{n}:=\log_{\theta}\left(\frac{(1-2p)^2}{1-p} n\right)$ for each fixed $j\geq 1$ in the sense that for all $x\in\R$, 
	\begin{align}
	\exp(-\theta^{-x}) \sum_{k=0}^{j-1} \frac{\theta^{-k(x+1)}}{k!}
	 &\leq \liminf_{n\rightarrow \infty}\PP\left\{\lambda_{j}(n) \leq x+\mu_{n} \right\}  \\
	&\leq \limsup_{n\rightarrow \infty}\PP\left\{ \lambda_{j}(n) \leq x+\mu_{n} \right\} 
	\leq \exp(-\theta^{-(x+1)})\sum_{k=0}^{j-1} \frac{\theta^{-kx}}{k!}.
	\end{align}
	In particular, $\lambda_{j}(n)=\Theta(\log n)$.
	\medskip

	\item[(ii)] (Critical phase) For $p=1/2$, let $B=\{B_{t}\}_{0\leq t \leq 1}$ be a standard Brownian 
	motion on $[0,1]$. 	Then for each fixed $j\geq 1$, 
	\[
	n^{-1/2}[\lambda_{1}(n),\lambda_{2}(n),\ldots,\lambda_{j}(n)] \Rightarrow 
	[\max |B|,\,\max \E(|B|),\ldots,\,\max \E^{j-1}(|B|)],
	\]
	In particular, $\lambda_{j}(n)=\Theta(\sqrt{n})$.\\
	Furthermore, for any integers $j,k\geq 1$, 
	\[
	\lim_{n\rightarrow\infty} n^{-k/2}\EE[ (\lambda_{j}(n))^{k} ]  = \EE\left[ \left( \max \E^{j-1}(|B|) \right)^{k} \right].
	\]				
	\medskip

	\item[(iii)] (Supercritical phase)	For $p>1/2$, 
	\begin{align}
	\frac{\lambda_{1}(n)-(2p-1)n}{2\sqrt{p(1-p)n}}\Rightarrow Z\sim\mathcal{N}(0,1).
	\end{align}
	Furthermore, there exists a constant $c=c(p)>0$ such that
	\begin{align}
	\PP\{|\lambda_{1}(n)-(2p-1)n|\geq x\} \leq c\exp(-x^{2}/(8n)),
	\end{align}
	and for all $j\geq 2$, $\lambda_{j}(n)$ is concentrated around $\hat{\mu}_{n}:=\log_{\theta^{-1}}\left(\frac{(1-2p)^{2}}{p}n \right)$ 
	in the sense that for all $x\in\R$,  
	\begin{align}
	 \left(  \exp(-\theta^{-\frac{x}{2}}) \sum_{k=0}^{j-1} \frac{\theta^{-k(\frac{x}{2}+1)}}{k!}\right) -c\theta^{\frac{x}{8}} 
	& \leq \liminf_{n\rightarrow \infty}\PP\left\{ \lambda_{j}(n) \leq x+\hat{\mu}_{n} \right\}  \\
	& \hspace{-1cm} \leq \limsup_{n\rightarrow \infty}\PP\left\{ \lambda_{j}(n) \leq x+\hat{\mu}_{n} \right\} \leq 
	\left( \exp(-\theta^{-(\frac{3x}{2}+1)}) \sum_{k=0}^{j-1} \frac{\theta^{-\frac{3kx}{2}}}{k!} \right) + c\theta^{\frac{x}{8}}.
	\end{align}
	In particular, $\lambda_{1}(n)=\Theta(n)$ and $\lambda_{j}(n)=\Theta(\log n)$ if $j\geq 2$.
\end{description}
\end{customthm}

We call the statement in Theorem \ref{theorem:columns} (iii) a \emph{condensation phenomenon} because in the 
supercritical regime, a linear number of balls condense into the longest soliton while the next $j$ longest solitons each have $\Theta(\log n)$ balls. 

The methods that we develop in this paper to study the box-ball system yield several interesting results on lengths of monotone subsequences 
in random pattern avoiding permutations. The study of statistics involving longest increasing or decreasing subsequences in different types of random 
permutations has a long history and rich connections to many other fields \cite{romik2015surprising}. In the context of the box-ball system, the class of 
$312$-avoiding permutations arises naturally, and we are able to generalize some classical results on such permutations in multiple directions.  

For each $n\in \N$, let $\ess_{n}$ be the set of all permutations on $\{1,2,\ldots,n\}$. Given two permutations $\sigma\in \ess_{n}$ and $\tau\in \ess_{k}$ 
with $1< k\leq n$, we say that $\sigma$ is \textit{$\tau$-avoiding} if no subsequence of $\sigma$ has the same relative order as $\tau$. (For example, a 
permutation is $312$-avoiding if there is no subsequence of the form $z,x,y$ with $x<y<z$.) Denote by 
$\ess^{\tau}_{n}$ the set of all $\tau$-avoiding permutations in $\ess_{n}$. Note that $\sigma$ is $\tau$-avoiding if and only if $\sigma^{-1}$ is 
$\tau^{-1}$-avoiding. (In particular, $\sigma$ is 231-avoiding if and only if $\sigma^{-1}$ is 312-avoiding.) Given a permutation $\sigma\in \ess_{n}$, 
define integers $\lambda_{1},\ldots,\lambda_{k}$ (resp. $\rho_{1},\ldots, \rho_{k}$) recursively so that 
$\lambda_{1}(\sigma)+\cdots+\lambda_{k}(\sigma)$ (resp. $\rho_{1}(\sigma)+\cdots+\rho_{k}(\sigma)$) equals the length of the longest subsequence 
in $\sigma$ obtained by taking a disjoint union of $k$ decreasing (resp. increasing) subsequences. 

In a classic work \cite{rotem1981stack}, Rotem studied properties of 231-avoiding permutations chosen uniformly at random among all such 
permutations of a given length. He showed that if $\Sigma^{n}$ is a permutation in $\ess^{231}_{n}$ chosen uniformly at random, then
\[
\EE[\rho_{1}(\Sigma^{n})] = (n+1)/2, \quad \EE[\lambda_{1}(\Sigma^{n})] = \sqrt{\pi n}+O(1).
\]
Our next theorem is an extension of the above result both to the higher moments and to `subsequent' longest increasing and decreasing 
subsequences of $\Sigma^{n}$. 

\begin{customthm}{3}
\label{theorem:permutations}
Let $\Sigma^{n}$ be a uniformly chosen random $312$- (or $231$-) avoiding permutation of length $n$. 
\begin{description}[leftmargin=!,labelwidth=\widthof{\bfseries [(ii)]}]
	\item[(i)] Suppose that $T^{n}_{1},T^{n}_{2},\ldots,T^{n}_{i}$ is a sequence of rooted trees where $T^{n}_{1}$ is chosen uniformly 
	at random 	among all rooted plane trees on $n+1$ nodes, and for $r\geq 1$, $T^{n}_{r+1}$ is obtained from $T^{n}_{r}$ by deleting all 
	leaves. Then
	\[
	\qquad[\rho_{1}(\Sigma^{n}),\rho_{2}(\Sigma^{n}),\ldots,\rho_{i}(\Sigma^{n})] =_{d} 
	[\text{$\#$ of leaves in $T^{n}_{1}$}, \text{$\#$ of leaves in $T^{n}_{2}$}, \ldots, \text{$\#$ of leaves in $T^{n}_{i}$}]. 
	\]
	
	\item[(ii)] Let $\{S_{k}\}_{k=0}^{\infty}$ be a simple symmetric random walk with $S_{0}=0$ and let 
	$\varsigma=\inf\{k>0\st S_{k}=0\}$ be the time of its first return to $0$. Then for any fixed $i\geq 1$,
	\[
	\frac{\rho_{i}(\Sigma^{n})}{2n}\rightarrow \PP\Big\{\max_{0\leq k \leq \varsigma} S_{k}=i\Big\}>0  \;\;\; \text{ a.s.} 
	\;\;\; \text{as $\; n\rightarrow \infty$}.
	\]
	
	\item[(iii)] Let $B^{\text{ex}}=(B_{t}^{\text{ex}})_{0\leq t \leq 1}$ be a standard Brownian excursion on $[0,1]$. Then for each fixed $j\geq 1$, 
	\[
	\qquad\quad n^{-1/2}[\lambda_{1}(\Sigma^{n}),\lambda_{2}(\Sigma^{n}),\ldots,\lambda_{j}(\Sigma^{n})] \Rightarrow 
	\sqrt{2}[\max B^{\text{ex}},\,\max \E(B^{\text{ex}}),\ldots,\max \E^{j-1}(B^{\text{ex}})].
	\]
	Furthermore, for any integers $j,k\geq 1$, 
	\[
	\lim_{n\rightarrow\infty} n^{-k/2}\EE[ (\lambda_{j}(\Sigma^{n}))^{k} ]  = 2^{k/2} \, \EE\left[ \left( \max \E^{j-1}(B^{ex}) \right)^{k} \right].
	\]
\end{description}	
\end{customthm}

We remark that given a $312$-avoiding permuatation $\sigma$, we can actually interpret $\lambda_{k}(\sigma)$ as the length of the longest decreasing 
subsequence after successively deleting an arbitrary longest decreasing subsequence $k-1$ times. For the rows, we can interpret $\rho_{k}$ similarly but 
the longest increasing subsequence we delete at each step must be a special one; see Proposition \ref{prop:permutation_Greene}. 
Note that such an interpretation is not valid for general permutations.

\subsection{Outline and organization} 
\label{subsection:Outline}

Broadly speaking, we proceed by observing correspondences between various combinatorial objects related to box-ball configurations, such as 
Motzkin paths, rooted forests, and $312$-avoiding permutations; see Figure \ref{fig:correspondences}. We can then interpret 
the rows and columns of the Young diagram associated with a box-ball configuration in terms of these objects (Table \ref{fig:table2_diagram}). 
This allows us to reformulate the original soliton problem in other languages and vice versa. 

\begin{figure*}[h]
	\centering
	\includegraphics[width=.95 \linewidth]{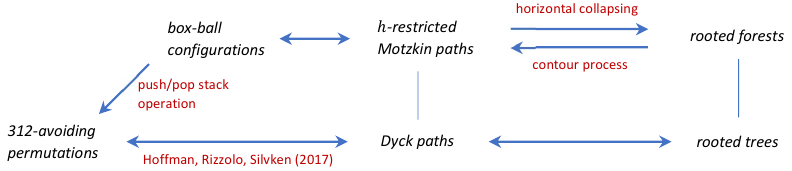}
	\caption{Correspondences and inclusions between six combinatorial objects. Objects in the same row are in bijective correspondence, 
		     while the vertical lines indicate that the bottom objects are special cases of the top.}
	\label{fig:correspondences}
\end{figure*}

For us, Motzkin paths provide the most useful framework, especially in the random setting. This is because the random box-ball 
configuration $X^{n,p}$ can be viewed as the increment sequence of the first $n$ steps of a simple random walk driven by the $\text{Bernoulli}(p)$ 
measure. The corresponding random ($h$-restricted) Motzkin path is the same simple random walk except that downstrokes at height $0$ are 
censored. The problem then essentially boils down to studying properties of the excursions of such censored random walks. The results for random 
Motzkin paths can then be translated back to solitons or permutations. 

\begin{table}[h]
	\centering
	\includegraphics[width=.95 \linewidth]{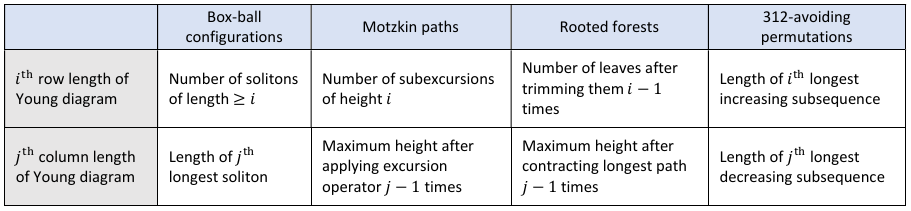}
	\vspace{0.2cm}
	\caption{Interpretation of rows and columns of the Young diagram associated with four combinatorial objects.}
	\label{fig:table2_diagram}
\end{table}
 
This paper is organized as follows: In Section \ref{section:Young Diagram}, we describe relations between box-ball configurations, Motzkin paths, 
and rooted forests, and show how to construct the Young diagram from these objects. In Section \ref{section:random box-ball and rows}, we 
discuss a correspondence between random box-ball configurations, a birth-and-death chain, and a Galton-Watson forest. We prove Theorem 
\ref{theorem:rows} in Section \ref{section:SLLN for Rows}, and the proof of Theorem \ref{theorem:columns} is given in Sections \ref{section:Subcritical}, 
\ref{section:Critical}, and \ref{section:Supercritical}. In Section \ref{section:Permutation}, we discuss a connection between box-ball configurations and 
pattern-avoiding permutations and prove Theorem \ref{theorem:permutations}. Finally, in Appendix \ref{Section:proof of combinatorial Lemmas}, we prove 
the three lemmas stated in Subsection \ref{subsection:Youngdiagram_construction} along with some results concerning 312-avoiding permutations.

\section{Constructing the time-invariant Young Diagram}
\label{section:Young Diagram}

In this section, we establish some important statements about the Young diagram which will be used crucially in later sections.

\subsection{Motzkin paths}
\label{subsection:Motzkin paths}

We begin with a bijection between box-ball states and a class of lattice paths we call \emph{$h$-restricted Motzkin}, a minor variant of the 
bijection with Dyck paths in \cite{torii1996combinatorial}. A function $f:\R^{+}\rightarrow \R$ is a \textit{lattice path} if $f$ is the 
linear interpolation of some function $\gamma:\N_{0}\rightarrow \Z$. A lattice path $f$ is called \textit{Motzkin} if it is nonnegative, compactly supported, 
and consists only of $(1,1)$, $(1,-1)$, and $(1,0)$ steps (which we refer to as `upstrokes,' `downstrokes,' and `$h$-strokes,' respectively). 
We say that a Motzkin path is \emph{$h$-restricted} if its $h$-strokes occur only on the $x$-axis. Finally, if $\Gamma$ is a Motzkin path, we write 
$\Gamma_{k}$ for $\Gamma(k)$, $k\geq 0$.

The aforementioned bijection maps a (compactly supported) configuration $X:\N\rightarrow\{0,1\}$ to the $h$-restricted Motzkin path $\Gamma(X)$ 
defined by linear interpolation of its values on $\N_{0}^{2}$, which are given recursively by $\Gamma(X)_{0}=0$ and 
\[ 
\Gamma(X)_{k+1} - \Gamma(X)_k = 
\begin{cases}
	+1 & \text{if $X(k+1)=1$} \\
	-1 & \text{if $X(k+1)=0$ and $\Gamma(X)_{k}\geq 1$} \\
	0 & \text{if $X(k+1)=0$ and $\Gamma(X)_{k}=0$}
\end{cases} 
\]
for all $k\geq 0$. The inverse map from paths to configurations proceeds by writing a $0$ for each downstroke or $h$-stroke and a $1$ for each 
upstroke. See Figure \ref{fig:Motzkin} for an illustration. 

\begin{figure*}[h]
	\centering
	\includegraphics[width=0.9 \linewidth]{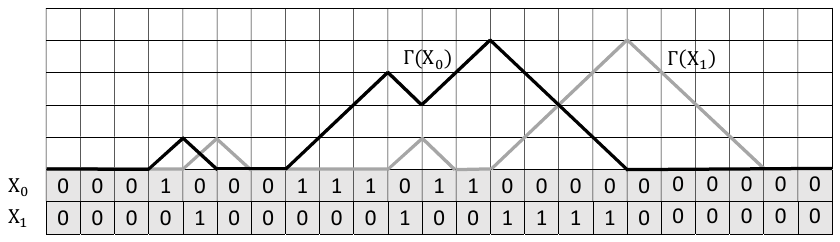}
	\caption{The top shaded row shows an initial box-ball configuration $X_{0}$, and the black path is $\Gamma(X_{0})$. 
                     To update, balls are placed at downstrokes of $\Gamma(X_{0})$, resulting in the configuration $X_{1}$ and the 
                     grey path $\Gamma(X_{1})$.}
	\label{fig:Motzkin}
\end{figure*}

The shape of this path tells us how to evolve the system by a single sweep: A ball is picked up at each upstroke and deposited at each downstroke. 
Specifically, label the balls $1,\ldots,m$ from left to right. (This labeling applies only to states, not the system as a 
whole. In subsequent sweeps, the label of a particular ball may change.) Then the $j^{\text{th}}$ upstroke occurs at the site where the carrier picks 
up the ball labeled $j$. The site at which she deposits ball $j$ is determined by drawing a horizontal line from the center of the $j^{\text{th}}$ 
upstroke to the first downstroke on its right. From this description, we see that the height of the path at any site equals the number of balls in the 
carrier's stack after she visits that site. When the sweep is completed, the new state of the system corresponds to the unique path formed by converting 
each downstroke to an upstroke and then adding $h$-strokes and downstrokes so that it is $h$-restricted Motzkin. 

Formally, the box-ball state $X_{s+1}$ is given in terms of the Motzkin path $\Gamma(X_s)$ by 
\begin{equation}
\label{eq:Motzkin_to_boxball}
X_{s+1}(k+1) = \one\big\{ \Gamma(X_{s})_{k+1}-\Gamma(X_{s})_{k}=-1 \big\}
\end{equation}
where $\one$ is the indicator function.

\subsection{Hill-flattening and excursion operators }
\label{subsection:Youngdiagram_construction}

We now describe two methods of constructing a Young diagram $\Lambda(\Gamma)$ associated with a (not necessarily $h$-restricted) Motzkin path 
$\Gamma$. As usual, we denote the $i^{\text{th}}$ row and $j^{\text{th}}$ column by $\rho_{i}(\Gamma)$ and $\lambda_{j}(\Gamma)$. 

First we give the row-wise construction using the \emph{hill-flattening operator} $\HH$ defined on the set of all Motzkin paths. To begin, we 
say that an interval $[a,b]$ with $a,b\in\N_{0}$ and $a\leq b$ is a \emph{hill interval} of the Motzkin path $\Gamma$ if for every $c\in[a,b]$, 
$\Gamma_{a-1}=\Gamma_{c}-1=\Gamma_{b+1}$. We write $\I(\Gamma)$ for the collection of all hill intervals of $\Gamma$, 
and denote the number of hill intervals by $\rho(\Gamma)=|\I(\Gamma)|$. The hill-flattening operator $\HH$ is then defined by
\[
\HH(\Gamma)_{k}=
\begin{cases}
\Gamma_{k}-1 & \text{if $k$ is contained a hill interval of $\Gamma$} \\
\Gamma_{k} & \text{otherwise}
\end{cases}
\]
for $k\in\N_{0}$. 

A \textit{hill} of $\Gamma$ is the graph of $\Gamma$ over $[a-1,b+1]$ with $[a,b]$ a hill interval. Thus hills consist of a single upstroke, followed by zero 
or more $h$-strokes, followed by a single downstroke. Call a hill with no $h$-strokes a  \emph{peak}. Then the hill-flattening operator $\HH$, 
when applied to $\Gamma$, \textit{flattens} each hill of $\Gamma$ by replacing the upstroke and downstroke with $h$-strokes and then lowering any 
intermediate $h$-strokes so that the path remains connected. 

Note that each application of the hill-flattening operator decreases the maximum height of the Motzkin path by $1$ and never increases 
the number of hills, so
\[
\rho(\Gamma)\geq \rho(\HH(\Gamma)) \geq \rho(\HH^{2}(\Gamma)) \geq \cdots 
\geq \rho(\HH^{\max \Gamma}(\Gamma))=0.
\]
We define the Young diagram $\Lambda(\Gamma)$ associated to the Motzkin path $\Gamma$ as having $i^{\text{th}}$ row of length 
$\rho_{i}(\Gamma)=\rho( \HH^{i-1}(\Gamma))$ for $1 \leq i \leq \max \Gamma$. Here repeated applications of $\HH$ are 
denoted by $\HH^{j+1}(f)=\HH\big(\HH^{j}(f)\big)$ with $\HH^{0}$ the identity operator. In particular, given 
a box-ball configuration $X:\N_{0}\rightarrow \{0,1\}$ of finite support, we can construct the Young diagram $\Lambda(\Gamma(X))$. 
See Figure \ref{Young diagram} for an illustration.

\begin{figure*}[h]
	\centering
	\includegraphics[width=0.9 \linewidth]{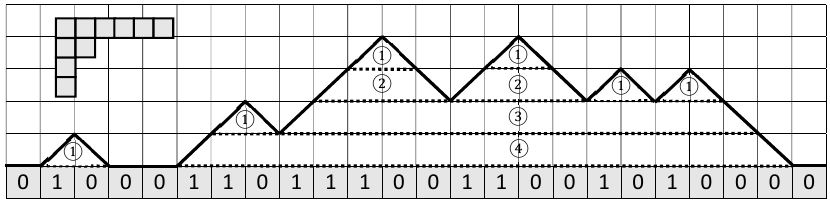}
	\caption{Construction of Young diagram via hill-flattening procedure applied to the Motzkin path associated with box-ball configuration $X$. 		
		    The bottom row is the configuration $X$ and the black path is $\Gamma=\Gamma(X)$. Trapezoidal regions with label $i$ are the hills of 
		    $\HH^{i-1}(\Gamma)$, each of which becomes a distinct cell in the $i^{\text{th}}$ row of $\Lambda(\Gamma)$. The resulting 
		    Young diagram $\Lambda(\Gamma)$ is depicted in the upper left corner.}
	\label{Young diagram}
\end{figure*}

Now consider a box-ball system $(X_{s})_{s\geq 0}$ started from a configuration $X_{0}:\N_{0}\rightarrow \{0,1\}$. The following 
lemma says that for each $s\geq 0$, the corresponding 
Young diagram $\Lambda(\Gamma(X_{s}))$ is independent of $s$ and its column lengths correspond to the lengths of the solitons.

\begin{lemma}
\label{lemma:invariance_Youngdiagram}
$\Lambda(\Gamma(X_{s}))=\Lambda(\Gamma(X_{s+1}))$ for all $s\geq 0$. Moreover, $\Lambda(\Gamma(X_{0}))=\Lambda(X_{0})$.
\end{lemma}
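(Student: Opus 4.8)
The plan is to treat the invariance $\Lambda(\Gamma(X_{s}))=\Lambda(\Gamma(X_{s+1}))$ as the substantive content and to deduce the identity $\Lambda(\Gamma(X_{0}))=\Lambda(X_{0})$ (where $\Lambda(X_{0})$ denotes the Young diagram of soliton lengths of the system started at $X_{0}$) from it. For the latter: since only finitely many balls are present, after finitely many sweeps (as observed in the introduction) the configuration is stable, i.e.\ a left-to-right concatenation of triangular excursions $U^{k_{1}}D^{k_{1}},U^{k_{2}}D^{k_{2}},\dots$ with $k_{1}\le k_{2}\le\cdots$ the soliton lengths, separated by $h$-strokes at height $0$. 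Every hill interval of such a path lies in the height-$\ge 1$ interior of a single excursion, and the only hill interval of $U^{k}D^{k}$ is its apex, so $\HH$ replaces each $U^{k}D^{k}$ by $U^{k-1}H^{2}D^{k-1}$ and erases those with $k=1$; iterating, $\rho\bigl(\HH^{i-1}(\Gamma(X_{\mathrm{stable}}))\bigr)$ is exactly the number of solitons of length at least $i$, that is $\Lambda(\Gamma(X_{\mathrm{stable}}))=\Lambda(X_{0})$. The invariance then gives $\Lambda(\Gamma(X_{0}))=\Lambda(\Gamma(X_{\mathrm{stable}}))=\Lambda(X_{0})$.

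For the invariance, I would first encode a single sweep as a map on Motzkin paths: by \eqref{eq:Motzkin_to_boxball}, $\Gamma(X_{s+1})=\sigma(\Gamma(X_{s}))$, where $\sigma(\gamma)$ is the unique $h$-restricted Motzkin path whose upstrokes occupy precisely the downstroke positions of $\gamma$. The first-row identity $\rho(\Gamma(X_{s}))=\rho(\Gamma(X_{s+1}))$ is then a bookkeeping argument: in an $h$-restricted path every hill interval is a single-point peak (it sits at height $\ge 1$, where no $h$-stroke is allowed), and since an $h$-stroke occurs only at height $0$, a maximal run of upstrokes is always immediately followed by a maximal run of downstrokes, so the maximal up-runs and the maximal down-runs alternate and are equinumerous; therefore $\rho(\gamma)$ equals the number of peaks of $\gamma$, equals the number of maximal up-runs of $\gamma$, equals the number of maximal down-runs of $\gamma$, equals---because $\sigma$ turns consecutive downstrokes of $\gamma$ into consecutive upstrokes of $\sigma(\gamma)$---the number of maximal up-runs of $\sigma(\gamma)$, equals $\rho(\sigma(\gamma))$. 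To handle the remaining rows I would induct on $\max\Gamma$ (which $\HH$ strictly decreases on nonconstant paths), after extending $\sigma$ and the identity $\rho\circ\sigma=\rho$ to the no-longer-$h$-restricted paths arising from iterated hill-flattening, and establishing the commutation identity
\[
\Lambda\bigl(\HH(\sigma(\gamma))\bigr)=\Lambda\bigl(\sigma(\HH(\gamma))\bigr).
\]
Granting this, for $i\ge 1$ one has $\rho(\HH^{i}(\sigma(\gamma)))=\rho(\HH^{i-1}(\sigma(\HH(\gamma))))=\rho(\HH^{i-1}(\HH(\gamma)))=\rho(\HH^{i}(\gamma))$, the middle equality being the inductive hypothesis applied to the shorter path $\HH(\gamma)$; taking $\gamma=\Gamma(X_{s})$ gives $\Lambda(\Gamma(X_{s+1}))=\Lambda(\Gamma(X_{s}))$, completing the induction.

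The main obstacle is the commutation identity. The paths $\HH(\sigma(\gamma))$ and $\sigma(\HH(\gamma))$ genuinely differ in general: hill-flattening deletes the innermost peaks outright, whereas the carrier implementing $\sigma$ re-deposits the corresponding balls one box to the right, so the two operations place the resulting $h$-strokes at different sites; the assertion is that these discrepancies are invisible to every subsequent hill-flattening. I would prove it by a local analysis---decomposing $\gamma$ along its hills and along the arches of its height-$1$ excursions, and tracking how lowering a hill displaces the carrier's pickup and drop sites---verifying that each such displacement amounts to a ``slide'' of $h$-strokes that leaves $\rho(\HH^{j}(\cdot))$ unchanged for all $j$. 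The detour through non-$h$-restricted paths is forced by the fact that $\HH$ destroys $h$-restrictedness, and the overall scheme parallels the time-invariance proof for the Dyck-path encoding of \cite{torii1996combinatorial}.
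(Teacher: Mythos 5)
Your reduction of the second claim to the first is fine and matches the paper: once the system stabilizes, $\Gamma(X_{\mathrm{stable}})$ is a concatenation of triangular excursions and hill-flattening visibly reads off the soliton lengths, so invariance does the rest. The problem is the first claim. Your entire argument for invariance funnels through the commutation identity $\Lambda\bigl(\HH(\sigma(\gamma))\bigr)=\Lambda\bigl(\sigma(\HH(\gamma))\bigr)$, and you do not prove it: you correctly observe that the two paths genuinely differ (flattening deletes an innermost peak in place, while the carrier re-deposits that ball one box to the right), and then assert that the discrepancy is ``invisible to every subsequent hill-flattening.'' That assertion is essentially the whole content of the lemma restated one level down; ``I would prove it by a local analysis \ldots tracking how lowering a hill displaces the carrier's pickup and drop sites'' is a plan, not an argument, and it is exactly the hard combinatorial step. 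A secondary gap sits underneath it: your bookkeeping identity $\rho(\gamma)=\#\{\text{maximal up-runs}\}=\rho(\sigma(\gamma))$ is proved only for $h$-restricted paths, but the induction forces you to apply it to the non-$h$-restricted paths produced by $\HH$, where it fails as stated (in a path containing the pattern upstroke, $h$-stroke, upstroke, the first up-run is not followed by a hill, so the number of hills can be strictly smaller than the number of maximal up-runs). You flag the need to ``extend'' the identity but do not say what the correct extension is.

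For comparison, the paper avoids the commutation problem entirely. It first proves (Proposition \ref{prop:readbackward}) that reading the configuration $X_{s+1}$ from right to left reproduces $\vec{\Gamma}(X_{s})$, so that $\vec{\Gamma}(X_{s})$ and $\vec{\Gamma}(X_{s+1})$ are the backward and forward readings of the \emph{same} binary word. It then replaces hill-flattening by peak contraction, i.e.\ deletion of $1\,0$ patterns from the word, so that the two row-by-row constructions become ``count $1$-strings, delete $1\,0$ patterns'' versus ``count $1$-strings, delete $0\,1$ patterns''; an elementary observation about how $1$-strings shrink or merge shows these produce identical row lengths at every stage. If you want to salvage your route, you would need to actually carry out the local analysis proving the commutation identity (and fix the up-run count for non-$h$-restricted paths); as written, the proof is incomplete at its central step.
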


Next, we give the column-wise construction of $\Lambda(\Gamma)$. The key observation is that the $j^{\text{th}}$ longest column length, which 
we denote by $\lambda_{j}$, is obtained by successively applying the excursion operator to $\Gamma$ $j-1$ times and then taking a maximum. 

\begin{lemma}
\label{lemma:columnbyexcursionop}
Let $\Gamma$ be a Motzkin path and let $\lambda_{j}(\Gamma)$ denote the length of the $j^{\text{th}}$ column of $\Lambda(\Gamma)$. Then 
\[
\lambda_{j}(\Gamma) = \max \E^{j-1}(\Gamma), \quad  1\leq j \leq \rho(\Gamma).
\]
In particular, if $(X_{s})_{s\in \N_{0}}$ is a finitely supported box-ball system with initial configuration $X_{0}:\N\rightarrow\{0,1\}$, then 
\[
\lambda_{j}(X_{0}) = \max \, \E^{j-1}(\Gamma(X_{0})).
\]
\end{lemma}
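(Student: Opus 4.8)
The plan is to prove Lemma \ref{lemma:columnbyexcursionop} by induction on $j$, with the base case $j=1$ asserting that $\lambda_1(\Gamma)=\max\Gamma$. To establish the base case, I would use the hill-flattening description of $\Lambda$: since each application of $\HH$ lowers the maximum height by exactly $1$ and the diagram has rows $\rho_i(\Gamma)=\rho(\HH^{i-1}(\Gamma))$ for $1\le i\le \max\Gamma$, the number of rows — which is the length of the first column $\lambda_1(\Gamma)$ — is precisely $\max\Gamma$. (One must also check that $\rho(\HH^{i-1}(\Gamma))>0$ for all $i\le \max\Gamma$, i.e. that as long as the path has positive height it has at least one hill; this follows because a Motzkin path of positive maximum height must contain a local maximum, and the maximal interval of consecutive sites achieving a given local peak value, flanked by an upstroke and a downstroke, is a hill interval.) Note $\max\Gamma = \max\E^{0}(\Gamma)$ since $\E^0$ is the identity, so the base case reads $\lambda_1(\Gamma)=\max\E^0(\Gamma)$.

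For the inductive step, the key claim I would isolate is that the excursion operator $\E$ removes exactly the first column of the Young diagram: namely, $\Lambda(\E(\Gamma))$ is obtained from $\Lambda(\Gamma)$ by deleting its first column, so that $\lambda_{j+1}(\Gamma) = \lambda_j(\E(\Gamma))$ for $1\le j\le \rho(\Gamma)-1$. Granting this, the induction closes immediately: $\lambda_{j}(\Gamma)=\lambda_{j-1}(\E(\Gamma))=\max\E^{j-2}(\E(\Gamma))=\max\E^{j-1}(\Gamma)$. To prove the claim, I would compare the two row-wise constructions. The rows of $\Lambda(\Gamma)$ are $\rho(\HH^{i-1}(\Gamma))$ and the rows of $\Lambda(\E(\Gamma))$ are $\rho(\HH^{i-1}(\E(\Gamma)))$; deleting the first column of a Young diagram decreases each row length by $1$ (for those rows of length $\ge 1$) and removes the empty rows, so what I need is the identity $\rho(\HH^{i}(\Gamma)) = \rho(\HH^{i-1}(\E(\Gamma)))$ for all $i\ge 1$, together with $\rho(\E(\Gamma)) = \rho(\Gamma)-1$.

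The real work, and the step I expect to be the main obstacle, is this combinatorial comparison of $\HH$ and $\E$ — essentially showing that the excursion operator $\E$ and the hill-flattening operator $\HH$ satisfy $\HH^{i}(\Gamma)$ and $\HH^{i-1}(\E(\Gamma))$ have the same number of hill intervals, ideally by exhibiting that $\HH\circ\HH$ and $\HH\circ\E$ (or rather $\E\circ\HH$ versus $\HH\circ\E$) are intertwined in a suitable sense. The cleanest route is probably to understand the structure of $\Gamma$ relative to its rightmost global maximum at $b=\m(\Gamma)$: the operator $\E_b$ subtracts from $\Gamma$ its running minimum measured from $b$, which splits the path into excursions above successive "record" levels. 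One checks that the hills of $\Gamma$ that lie at height $\ge 2$ (equivalently, the hill intervals of $\HH(\Gamma)$) are in bijection with the hills of $\E(\Gamma)$, because flattening the top layer via $\HH$ and re-indexing the remaining layers is compatible with the cut-and-shift performed by $\E$; meanwhile the unique hill interval destroyed (the one containing the rightmost peak) accounts for the drop $\rho(\E(\Gamma))=\rho(\Gamma)-1$. Since this structural fact is the crux and is somewhat delicate — it must handle $h$-strokes in hills, nested excursions, and the tie-breaking in the definition of $\m$ — I would expect this lemma's proof to be deferred to the appendix, as the outline indeed indicates (Appendix \ref{Section:proof of combinatorial Lemmas}), and would there carry it out by a careful case analysis on the local shape of $\Gamma$ near each hill interval, or alternatively by an induction on $\max\Gamma$ that peels off one layer at a time. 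The final sentence of the lemma, concerning box-ball systems, is then immediate from Lemma \ref{lemma:invariance_Youngdiagram}, which identifies $\Lambda(\Gamma(X_0))$ with $\Lambda(X_0)$ and its columns with the soliton lengths.
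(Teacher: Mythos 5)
Your overall architecture is viable and is essentially the transpose of the paper's argument: you induct on the column index $j$, using the excursion operator $\E$ to peel off the first \emph{column} of $\Lambda(\Gamma)$, whereas the paper inducts on $\max\Gamma$, using $\HH$ to peel off the first \emph{row} (its inductive step reads $\lambda_j(\Gamma)-1=\max\E^{j-1}(\HH(\Gamma))=\max\HH(\E^{j-1}(\Gamma))=\max\E^{j-1}(\Gamma)-1$). Both routes stand or fall on the same two facts, which you correctly isolate as the crux and which the paper proves as Propositions \ref{prop:numberofhills} and \ref{prop:hillandexcursioncommute}: that $\E$ removes exactly one hill interval (together with $\max\E^{j-1}(\Gamma)\geq 1$ for $j\leq\rho(\Gamma)$), and that $\HH$ and $\E$ commute. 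Your base case, including the check that $\HH^{i-1}(\Gamma)$ has a hill whenever it has positive height, is correct, and deferring the combinatorics to the appendix matches the paper's organization.

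There is, however, a concrete error in the identity you propose to verify. Deleting the first column of $\Lambda(\Gamma)$ shortens each row by one, so the row-wise statement you need is
\[
\rho\big(\HH^{i-1}(\E(\Gamma))\big)=\rho\big(\HH^{i-1}(\Gamma)\big)-1,\qquad 1\leq i\leq\max\Gamma,
\]
not $\rho(\HH^{i}(\Gamma))=\rho(\HH^{i-1}(\E(\Gamma)))$. The identity you wrote asserts that row $i$ of $\Lambda(\E(\Gamma))$ equals row $i+1$ of $\Lambda(\Gamma)$, i.e.\ that $\E$ deletes the first \emph{row}; that is false (take $\Gamma$ with two disjoint peaks of height $1$: then $\rho(\HH(\Gamma))=0$ while $\rho(\E(\Gamma))=1$), and even if it were true it would yield $\lambda_j(\E(\Gamma))=\lambda_j(\Gamma)-1$ rather than the shift $\lambda_j(\E(\Gamma))=\lambda_{j+1}(\Gamma)$ that your induction on $j$ requires. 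The fix is routine once the commutation is in hand: iterating it gives $\HH^{i-1}(\E(\Gamma))=\E(\HH^{i-1}(\Gamma))$ (the change of pivot from $\m(\Gamma)$ to $\m(\HH(\Gamma))$ is harmless because $\HH(\Gamma)$ is constant between the two), and applying $\E$ to any Motzkin path with at least one hill removes exactly one hill, giving the corrected identity. Finally, you leave the commutation itself as a sketch; the paper establishes it not by a local case analysis but via the explicit formula $\Gamma_y-\E_x(\Gamma)_y=\min\{\Gamma_z\st x\wedge y\leq z\leq x\vee y\}$ and a pointwise comparison of $\E_x(\HH(\Gamma))$ with $\HH(\E_x(\Gamma))$, which handles the $h$-strokes and nesting issues you flag in one stroke.
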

We relegate the proofs of these lemmas, along with that of Lemma \ref{lemma:Lipschitz} below, to 
Appendix \ref{Section:proof of combinatorial Lemmas} in order to maintain the flow of the paper. 

Lemma \ref{lemma:columnbyexcursionop} gives the following column-wise construction of $\Lambda(\Gamma)$.
Let $\m=\m(\Gamma)$ be the location of the rightmost global maximum of $\Gamma$, and set $\lambda_{1}(\Gamma)=\Gamma_{\m}$, 
the maximum height of $\Gamma$. 
To find $\lambda_{2}(\Gamma)$, one first computes $\E(\Gamma)$ by traversing $\Gamma$ to the left and right of $\m$ as follows: 
Starting with height $0$ at $\m$, move to the left, remaining at height $0$ until the first local minimum, and then record the sequence of 
strokes until the original lattice path returns to the height of this minimum. Then repeat the process, staying at height $0$ until encountering 
a local minimum and then recording the path of the second such excursion. Continue to the beginning of the path and then repeat the procedure 
moving to the right from $\m$. The resulting path precisely records all `subexcursions' which are not subsumed by the maximum $(\m,\Gamma_{\m})$. 
$\lambda_{2}(\Gamma)$, the length of the second column of $\Lambda(\Gamma)$, is equal to the maximum of $\E(\Gamma)$. 
Continuing in this fashion gives $\lambda_{j}(\Gamma)=\max \E^{j-1}(\Gamma)$ for all $j\geq 1$. 

\begin{figure*}[h]
\centering
\includegraphics[width=.9 \linewidth]{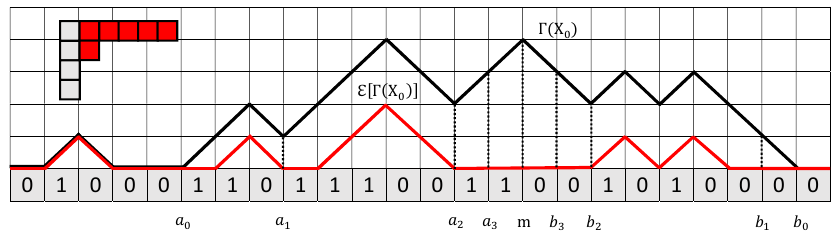}
	\caption{The black path is $\Gamma(X_{0})$, the red path is $\E(\Gamma(X_{0}))$, and $\m$ is the location of the 
		    rightmost maximum of $\Gamma(X_{0})$. The $k^{\text{th}}$ box in the bottom row is $X_{0}(k)$, and the 
		    red Young diagram is constructed from the red path via hill flattening. The $a$ and $b$ terms are defined in 
		    the proof of Proposition \ref{prop:numberofhills}.}
\label{Young diagram:column}
\end{figure*}

In light of Lemma \ref{lemma:columnbyexcursionop}, it is natural to call $\max \E^{j-1}$ the $j^{\text{th}}$ 
\textit{column length functional}. A crucial advantage of extracting the column length $\lambda_{j}$ from the functional $\max \E^{j-1}$ 
is that this operation is continuous with respect to the topology of $C_{0}^{+}(\R^{+})$ as stated in the lemma below. 
This enables us to take various scaling limits of the system. 

\begin{lemma} 
\label{lemma:Lipschitz}
For any interval $I\subseteq \R^{+}$, functions $f,g\in  C_{0}^{+}(I)$, and $j\geq 1$, 
\[
\Big| \max\E^{j-1}(f) - \max\E^{j-1}(g) \Big| \leq 2\lVert f-g \rVert_{\infty}.
\]
\end{lemma}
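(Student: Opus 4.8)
The plan is to fix $\eps:=\lVert f-g\rVert_\infty$ and, using the symmetry of the statement in $f$ and $g$, prove only the one–sided bound $\max\E^{j-1}(g)\le \max\E^{j-1}(f)+2\eps$ for every $j\ge 1$. Recall from Lemma~\ref{lemma:columnbyexcursionop} that $\max\E^{j-1}(h)=\lambda_j(h)$, the $j^{\text{th}}$ column length of $\Lambda(h)$; intuitively $\lambda_j(h)$ is the height of the $j^{\text{th}}$-largest sub-excursion of $h$, a number of the form (height of a peak of $h$)$-$(height of a base point of $h$), and replacing $h$ by an $\eps$–close function can lower such a peak by at most $\eps$ and raise such a base by at most $\eps$, which is why a loss of exactly $2\eps$, \emph{independent of $j$}, should be the truth. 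To make this work one needs to know which pairs of points can play the role of (base, peak), and this is read off by unfolding $\E$: writing $R_k:=\sup\{k(t)-k(s):\m(k)\le s\le t\}$ and $F_k:=\sup\{k(t)-k(s):t\le s\le \m(k)\}$ for the largest rise and fall of $k$ to the right and left of its pivot, a direct computation from the definition gives $\max\E(k)=\max(R_k,F_k)$, and iterating this with $k=\E^i(h)$ expresses $\max\E^{j-1}(h)$ in terms of the pivot chain $\m(h),\m(\E h),\dots,\m(\E^{j-2}h)$ together with one final rise or fall.

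The case $j=1$ is immediate from $\lvert \max f-\max g\rvert\le\eps$, and I would next treat $j=2$ in full, since it already displays the whole mechanism. We may assume $\lambda_2(g)>2\eps$, and fix points realizing the larger of $R_g,F_g$, say a rise with $\m(g)\le s_0\le t_0$ and $g(t_0)-g(s_0)=\lambda_2(g)$ (the fall case being analogous). If $\m(f)\le s_0$ then $R_f\ge f(t_0)-f(s_0)\ge (g(t_0)-\eps)-(g(s_0)+\eps)=\lambda_2(g)-2\eps$. If $\m(f)>s_0$ then $\m(g)\le s_0\le\m(f)$, and since $\m(g)$ is a global maximum of $g$ we have $f(\m(g))\ge\max g-\eps\ge g(t_0)-\eps$, hence $f(\m(g))-f(s_0)\ge g(t_0)-g(s_0)-2\eps=\lambda_2(g)-2\eps>0$; this is a genuine fall of $f$ with both endpoints $\le\m(f)$, so $F_f\ge\lambda_2(g)-2\eps$. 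Either way $\lambda_2(f)\ge\lambda_2(g)-2\eps$. The lesson is that when $f$'s pivot sits on the ``wrong'' side of the witnessing pair, one repairs the witness by substituting $\m(g)$ for one of its endpoints: being a global maximum of $g$, the point $\m(g)$ has $f$–value within $\eps$ of the peak $\max g\ge g(t_0)$ of the feature, and it automatically lies on the correct side of $\m(f)$ to serve as the new peak.

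For general $j$ the plan is an induction on $j$ (equivalently, on the length of the pivot chain) in which the object transferred from $g$ to $f$ is not a function but a finite \emph{certified configuration}: a point, a chain of would-be pivots, and a final rise or fall, subject to inequalities chosen loose enough that a configuration may be modified, yet tight enough that $\lambda_j(h)$ is the supremum of the values of such configurations for $h$ (the inequality ``$\ge$'' here being the content that needs care, ``$\le$'' being witnessed by the true chain $\m(h),\m(\E h),\dots$). Given a certified configuration for $g$ of value $\ell$, one processes it from the top level downward, at each level either keeping $f$'s pivot (a level that then costs nothing) or performing the $j=2$–style substitution of a global maximum; one wants the peak of the final configuration for $f$ to remain within $\eps$ of a value of $g$ and its base within $\eps$, so that the total loss is $2\eps$ rather than $2\eps$ per level. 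Ensuring exactly this — choosing the right notion of certified configuration and arranging the repairs so that the $2\eps$ does not accumulate with $j$ — is where the real work lies and is the step I expect to be the main obstacle. It is also worth emphasizing why the na\"ive alternative fails: one cannot simply peel a single $\E$ and induct, because $\lVert\E f-\E g\rVert_\infty$ is \emph{not} controlled by $\lVert f-g\rVert_\infty$ — shifting a tiny bump can move the location of the global maximum and reflect $\E f$ against $\E g$, making the two order one apart while $f$ and $g$ stay arbitrarily close — so the induction must be carried by a combinatorial certificate rather than by the functions $\E^i f$ themselves.
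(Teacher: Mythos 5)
Your reduction to a one-sided bound, the identity $\max\E(k)=\max(R_k,F_k)$, and the $j=2$ argument (including the repair step that substitutes $\m(g)$ for an endpoint when $\m(f)$ lands on the wrong side of the witnessing pair) are all correct. But the proposal stops exactly where the difficulty begins, and you say so yourself: for general $j$ you never define the ``certified configuration,'' never prove the crucial inequality that $\max\E^{j-1}(h)$ is bounded below by the value of every such configuration for $h$ (the direction you flag as needing care), and never carry out the level-by-level repair argument showing that the $2\eps$ loss does not accumulate to $2j\eps$. That last point is the entire content of the lemma --- a bound of $2j\eps$ would be easy but useless here, since the constant must be uniform in $j$ for the applications in Theorem \ref{t.columns} --- and nothing in the $j=2$ case forces the peak and base of the repaired configuration to stay within a single $\eps$ of values of $g$ after several substitutions at different levels. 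As written, this is a correct proof for $j\le 2$ together with a plan whose central step is acknowledged to be open, so it does not establish the lemma.

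For comparison, the paper sidesteps the pivot-chain bookkeeping entirely. It first proves the bound when $f$ and $g$ are Motzkin paths, by inducting on $\max\E^{j-1}(f)+\max\E^{j-1}(g)$: the commutation $\E_x\circ\HH=\HH\circ\E_x$ (Proposition \ref{prop:hillandexcursioncommute}) together with the fact that $\HH$ lowers $\max\E^{j-1}$ by exactly $1$ and is $1$-Lipschitz on Motzkin paths (Proposition \ref{prop:Lipschitz}(ii)) lets one strip a unit of height from both functions simultaneously without degrading the constant; the base case where one of the two maxima vanishes is handled by an endpoint computation much like your $j=2$ repair. It then transfers the bound to arbitrary $f,g\in C_0(\R)$ by approximating both by a common affine rescaling of extended Motzkin paths that preserves the pivot locations (Proposition \ref{prop:Motzkin_approx}). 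Your observation that one cannot simply peel off one $\E$ and induct --- because $\lVert\E f-\E g\rVert_\infty$ is not controlled by $\lVert f-g\rVert_\infty$ when the pivot jumps --- is exactly right and is precisely why the paper routes the induction through $\HH$ rather than through $\E$. If you want to salvage your approach, the cleanest fix is probably to import that same idea rather than to build the configuration calculus from scratch.
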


\begin{remark}[Depth process with drains]
\textup{
In private communication with Jim Pitman, we learned that an operator equivalent to $\E_{b}$ was used in studying Brownian paths and 
continuum random trees. In our context, given a Motzkin path $\Gamma$, flip it upside down and consider it as a bucket filled to the top with water. 
Given $b\in \R^{+}$, poke a hole at point $(b,-\Gamma(b))$. This will drain some of the water, and $-\E_{b}(\Gamma)(x)$ gives the water 
level at each $x\in \R^{+}$. For instance, the red path in Figure \ref{Young diagram:column} can be obtained from the black one in this way with 
drain at $b=\m(\Gamma)$. A similar procedure can be defined with multiple drains. This operation was applied to Brownian paths to study, for 
example, the line-breaking construction of the continuum random tree in a Brownian excursion \cite{aldous1993continuum}; sampling 
bridges, meanders, and excursions at independent uniform times \cite{pitman1999brownian}; and developments in the tree setting with different 
metaphors such as ``forest growth'' and ``bead crushing'' \cite{pitman2005growth,pitman2015regenerative}.   
}   
\end{remark}

\smallskip

\subsection{Rooted forests}
\label{subsection:Trees}

In this subsection, we develop an alternative perspective for constructing the Young diagram from an associated rooted forest. The idea is to collapse 
a Motzkin path to  a rooted forest by horizontal identification. Intuitively, one paints the underside of the graph of each excursion with glue and then 
compresses it horizontally to obtain a tree. Then the original Motzkin path can be viewed as the contour process (or Harris walk in the random 
setting) of the rooted forest so constructed. This point of view will be especially useful for thinking about arguments in Section 
\ref{section:Supercritical}.

To begin, recall that a \emph{rooted forest} is a sequence of vertex-disjoint plane trees $\{T_{i}\}_{i\geq 1}$ such that each $T_{i}$ 
is rooted at a vertex $\r_{i}\in V(T_{i})$. The \emph{level} of a vertex $v\in T_{i}$ is defined as $\ell(v)=d(v,\r_{i})$ where $d$ is 
the graph distance. Given a Motzkin path $\Gamma$, we define a rooted forest $\F(\Gamma)$ as follows: 
Let $G(\Gamma)=(V,E)$ be the graph with vertex set $V=\left\{(k,\Gamma_{k})\right\}_{k\in \N_{0}}\subset \N_{0}^{2}$ and 
adjacency relation 
\[
(a,\Gamma_{a}) \overset{\text{adj}}{\sim} (b,\Gamma_{b}) \Longleftrightarrow |a-b|=1
\text{ and } \Gamma_{a},\Gamma_{b}\text{ not both }0.
\]
In words, $G(\Gamma)$ is obtained from $\Gamma$ by removing the $h$-strokes at $0$ but retaining all vertices.
Clearly each component of $G(\Gamma)$ is isomorphic to a path beginning and ending at height $0$, and there are only finitely 
many such paths since $\Gamma$ has finite support. Arranging the components from left to right so that their vertex labels are increasing, 
let $P_{i}$ denote the $i^{\text{th}}$ component from the left. 
Define an equivalence relation $\sim$ on the vertex set of $G(\Gamma)$ by 
\[
(a,\Gamma_{a})\sim (b,\Gamma_{b}) \Longleftrightarrow \, 0<\Gamma_{a}=\Gamma_{b}\leq \Gamma_{x} 
\text{ for all } x\in [a,b],
\]
and write $T_{i}=P_{i}/\mathord{\sim}$ for the resulting rooted tree; see Figure \ref{fig:tree diagram}. The rooted forest associated with $\Gamma$ 
is $\F(\Gamma)=\{T_{i}\}_{i\geq 1}$.

\begin{figure*}[h]
	\centering
	\includegraphics[width=0.95 \linewidth]{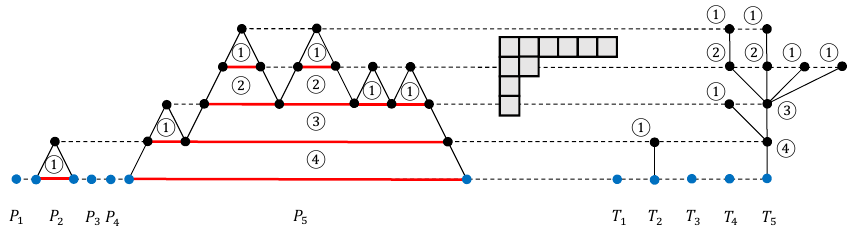}
	\caption{Rooted forest $\F(X_{0})$ corresponding to the box-ball configuration $X_{0}$ given in 
		Figure \ref{Young diagram}. Each connected component of $G(\Gamma(X_{0}))$ (left) becomes a tree rooted 
		at a blue node (right) by identifying vertices connected by the red horizontal lines. 
		Flattening hills of $\Gamma(X_{0})$ corresponds to trimming leaves from $\F(X_{0})$.}
	\label{fig:tree diagram}
\end{figure*}

We can recover $\Gamma$ from $\F(\Gamma)$ by keeping track of the levels of the vertices explored in depth-first search. 
This exploration process begins at the root of $T_{1}$ and visits nodes from bottom to top and from left to right in such a way that 
it backtracks to the parent of the current node only if there is no child left to visit. After exhausting all nodes in $T_{1}$, the explorer 
moves to the second tree $T_{2}$, and so on. 

More concretely, let $\iota:\N_{0}\rightarrow V(\F)$ be the function which maps $k$ to the location of the depth-first search at step $k$ so that 
$\iota(0)=\r_{1}$, $\iota(k+1)$ is the leftmost unvisited child of $\iota(k)$ if such a child exists, and $\iota(k+1)$ is the parent of $\iota(k)$ if its children 
have all been visited. (Here the parent of $\r_{i}$ is taken to be $\r_{i+1}$.) The depth-first-search ordering of the vertices of $\F$ is given by 
$u\prec v$ if $\min\{k:\iota(k)=u\}<\min\{k:\iota(k)=v\}$. Finally, the \emph{contour process} on $\F$ is the function $H(\F):\N_{0}\rightarrow \N_{0}$ 
which maps $k$ to the level of $\iota(k)$ in $\F$. By construction, $H(\F)(k) = \Gamma_{k}$ for every $k\in\N_{0}$.

Now we discuss how to compute the Young diagram $\Lambda(\Gamma)$ from the corresponding rooted forest $\F(\Gamma)$. In the previous 
subsection, we constructed the diagram from the Motzkin path via successive applications of the hill-flattening and excursion operators. In terms 
of rooted forests, these operators can be interpreted in terms of `trimming' and `lopping.' Namely, let $\Upsilon_{0}$ be the collection of all rooted 
forests with finitely many vertices and consider the \emph{trimming operator} $\T:\Upsilon_{0}\rightarrow \Upsilon_{0}$ which deletes all leaves 
of the input forest; see Figure \ref{fig:tree diagram}.

Next, the \textit{lopping operator} $\L:\Upsilon_{0}\rightarrow \Upsilon_{0}$ is defined as follows: 
Given a rooted forest $\F=\{T_{i}\}\in \Upsilon_{0}$, find the rightmost node of maximal level, say $v_{\m}\in V(T_{k})$. Set $q=\iota^{-1}(v_{\m})$ 
and  let $\gamma$ be the unique path from $\mathtt{r}_{k}$ to $v_{\m}$. Now let $\F_{1}$ and $\F_{2}$ be the rooted forests induced from $\F$ such 
that$V(\F_{1})=\iota([1,q])$ and $V(\F_{2})=\iota([q,\infty))$. Then $\L(\F)$ is obtained by first deleting all edges contained in the copies of $\gamma$ 
from $\F_{1}$ and $\F_{2}$, and then taking the union of the resulting rooted forests with components ordered according to the depth-first search; see 
Figure \ref{fig:tree youngdiagram}.

\begin{figure*}[h]
\centering
\includegraphics[width=0.95 \linewidth]{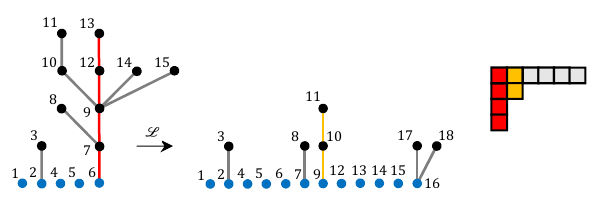}
	\caption{ The rooted forest $\F=\F(\Gamma(X_{0}))$ on the left appeared in Figure \ref{fig:tree diagram}, and the one on the right is 
	$\L(\F)$. Numbers next to nodes indicate depth-first-search ordering and $q=13$. Note that the maximum height of $\F$ and $\L(\F)$ 
	correspond to the first and second columns of $\Lambda(X_{0})$, respectively.}
	\label{fig:tree youngdiagram}
\end{figure*}

The following proposition shows that these operators are compatible with each other and gives a way to construct the Young diagram 
$\Lambda(\Gamma)$ from $\F(\Gamma)$.

\begin{prop}
\label{prop:Youngdiagram_forest}
For each Motzkin path $\Gamma$, we have the following:
	\begin{description}[leftmargin=!,labelwidth=\widthof{\bfseries [(iii)]}]
		\item[(i)] $ \F\big(\HH(\Gamma)\big) = \T\big(\F(\Gamma)\big)$.
		\item[(ii)] $\F\big(\E(\Gamma)\big) = \L\big(\F(\Gamma)\big)$.
		\item[(iii)] For each $1\leq i \leq \max \Gamma$, $\rho_{i} =\text{ $\#$ of leaves in $\T^{i-1}(\F(\Gamma))$ }$.
		\item[(iv)] For each $1\leq j \leq \rho(\Gamma)$, $\lambda_{j} = \text{ maximal level of nodes in $\L^{j-1}(\F(\Gamma))$ }$.
	\end{description}
\end{prop}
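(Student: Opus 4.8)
The proof proceeds by establishing the two operator-compatibility statements (i) and (ii) directly from the definitions, and then deducing (iii) and (iv) as immediate corollaries using Lemmas \ref{lemma:invariance_Youngdiagram} and \ref{lemma:columnbyexcursionop} together with the identity $H(\F(\Gamma)) = \Gamma$ relating a forest's contour process to its Motzkin path. The key bookkeeping device throughout is the bijection $\iota$ between $\N_0$ and the vertices of $\F(\Gamma)$ (with each internal vertex visited once per incident edge plus once), under which the level function pulls back to $\Gamma$; every structural operation on the path must be matched with the corresponding operation on the forest at the level of this indexing.

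\textbf{Step 1: Part (i).} I would first characterize leaves of $\F(\Gamma)$ in terms of $\Gamma$. A vertex of $T_i$ is a leaf precisely when it corresponds, under the equivalence $\sim$, to a strict local maximum of $\Gamma$ that is either a peak (single up-down) or the plateau of a hill (up, $h$-strokes, down) — in other words, leaves of $\F(\Gamma)$ are exactly the equivalence classes of points lying in hill intervals $\mathcal I(\Gamma)$, one leaf per hill interval. Deleting a leaf removes exactly the up-edge and down-edge forming that hill, which in contour-process terms lowers the path by one on the corresponding hill interval and leaves it unchanged elsewhere — precisely the definition of $\HH$. So $H(\T(\F(\Gamma)))$ and $H(\F(\HH(\Gamma)))$ agree as functions on $\N_0$; since a rooted forest is determined by its contour process, (i) follows. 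The one point needing care is the treatment of $h$-strokes at height $0$ and nested hills: flattening a hill may lower interior $h$-strokes, but those sit inside the hill interval and are handled uniformly by the ``otherwise" branch of $\HH$ versus the leaf-deletion on $\F$.

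\textbf{Step 2: Part (ii).} Here the matching is between the excursion operator $\E=\E_{\m(\Gamma)}$ and the lopping operator $\L$. The rightmost global maximum $\m(\Gamma)$ of the path corresponds under $\iota$ to the rightmost node $v_\m$ of maximal level, and the geodesic $\gamma$ from the root to $v_\m$ in the relevant tree corresponds exactly to the ascending portion of $\Gamma$ that realizes the running maximum at $\m$. Splitting $\F$ at $q=\iota(v_\m)$ into $\F_1, \F_2$ mirrors splitting $\Gamma$ at $\m$ into its left and right parts; deleting the edges of the copies of $\gamma$ from each side corresponds to subtracting, from each point, the running minimum of $\Gamma$ between that point and $\m$ — which is exactly $\E_{\m}(f)(t) = f(t) - \min_{\m \wedge t \le s \le \m \vee t} f(s)$. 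Re-concatenating the trimmed forests in depth-first order produces a forest whose contour process is $\E(\Gamma)$, and again uniqueness of the forest from its contour process gives (ii). I expect \textbf{this to be the main obstacle}: one must verify carefully that the ``subexcursions not subsumed by the maximum," as described in the column-wise construction after Lemma \ref{lemma:columnbyexcursionop}, correspond vertex-for-vertex and level-for-level to the components of $\L(\F)$, including that the ordering of components agrees and that no spurious height-$0$ $h$-strokes are introduced or lost at the splice point $q$.

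\textbf{Step 3: Parts (iii) and (iv).} These are now formal. For (iii): by definition $\rho_i(\Gamma) = \rho(\HH^{i-1}(\Gamma))$, which is the number of hill intervals of $\HH^{i-1}(\Gamma)$, equivalently the number of leaves of $\F(\HH^{i-1}(\Gamma))$ by the leaf characterization from Step 1; iterating (i) gives $\F(\HH^{i-1}(\Gamma)) = \T^{i-1}(\F(\Gamma))$, so $\rho_i$ is the number of leaves of $\T^{i-1}(\F(\Gamma))$. For (iv): by Lemma \ref{lemma:columnbyexcursionop}, $\lambda_j(\Gamma) = \max \E^{j-1}(\Gamma)$; iterating (ii) gives $\F(\E^{j-1}(\Gamma)) = \L^{j-1}(\F(\Gamma))$, and the maximum height of a Motzkin path equals the maximal level of a vertex in the associated forest (since $H(\F) = \Gamma$), so $\lambda_j$ equals the maximal level in $\L^{j-1}(\F(\Gamma))$. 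The only loose end is to confirm that $\E$ and $\L$ (respectively $\HH$ and $\T$) commute with iteration in the expected way, i.e. that applying the path operator $j-1$ times and then passing to the forest equals passing to the forest and applying the forest operator $j-1$ times — but this is immediate from (i) and (ii) by induction on $j$ (resp. $i$), since both sides are genuine functions on their respective spaces.
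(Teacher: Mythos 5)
Your proposal is correct and takes essentially the same route as the paper's own (sketched) proof: leaves of $\F(\Gamma)$ correspond to hill intervals for (i), the trunk/bush decomposition of the rightmost tallest tree matches the running-minimum subtraction in $\E_{\m}$ for (ii), and (iii)--(iv) follow by iterating (i)--(ii) together with Lemma \ref{lemma:columnbyexcursionop} and the identification of $\max\Gamma$ with the maximal level of $\F(\Gamma)$. You supply somewhat more detail than the paper's sketch (in particular the explicit leaf characterization and the contour-process bookkeeping around $h$-strokes), but the underlying argument is the same.
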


\begin{proof}
For (i), note that leaves in the forest correspond to hills in the path, so applying $\HH$ to $\Gamma$ results in the forest obtained by applying $\T$ 
to $\F(\Gamma)$.
For (ii), observe that $\E$ only affects the rightmost excursion of maximal height in $\Gamma$, $\L$ only affects the rightmost tree of maximal 
height in $\F(\Gamma)$, and the `bushes' growing off of the `trunk' of this tree correspond precisely to the subexcursions in the corresponding path 
component which are not subsumed by the maximum. 
	
Now assertion (i) shows that $\F(\HH^{i-1}(\Gamma)) = \T^{i-1}(\F(\Gamma))$ for all $1\leq i \leq \max \Gamma$, and $\rho_{i}$ is the number 
of hill intervals of $\HH^{i-1}(\Gamma)$, which equals the number of leaves in $\F(\HH^{i-1}(\Gamma))=\T^{i-1}(\F(\Gamma))$, and (iii) follows. 
Finally, given a rooted forest $\F$, denote by $\lVert \F \rVert$ the maximal level of nodes in $\F$. Then 
$\lVert \F(\Gamma) \rVert = \max \Gamma$, so (ii) implies
\[
\Big\lVert \L^{i-1}\big(\F(\Gamma)\big) \Big\rVert = \Big\lVert \F\big(\E^{i-1}(\Gamma)\big) \Big\rVert 
 = \max \E^{i-1}(\Gamma) = \lambda_{i}(\Gamma).\qedhere
\]
\end{proof}

We remark that Proposition \ref{prop:Youngdiagram_forest} (iv) holds if we replace the lopping operator $\mathcal{L}$ by the much simpler one 
which simply contracts the rightmost longest path into a single root. However, for this contraction operator Proposition 
\ref{prop:Youngdiagram_forest} (ii) no longer holds.

\section{Random box-ball system and Harris walk}
\label{section:random box-ball and rows}

In this section, we describe stochastic objects corresponding to the random box-ball system introduced in Subsection \ref{subsection:Statement}.

\subsection{Harris walks}
\label{subsection:Harris_walk}

Fix $p\in(0,1)$, and let $\xi_{1},\xi_{2},\ldots$ be i.i.d. with $\PP\{\xi_{1}=1\}=p$ and 
$\PP\{\xi_{1}=-1\}=1-p$. 
Let $X^{p},X^{n,p}\in\{0,1\}^{\N}$ be as in Subsection \ref{subsection:Statement}, and let $\{ S_{k} \}_{k=0}^{\infty}$ be the associated 
random walk, where $S_{0}=0$ and $S_{k}=\xi_{1}+\cdots+\xi_{k}$. The \textit{Harris walk} $\{H_{k}\}_{k=0}^{\infty}$ associated with 
$X^{p}$ is defined by $H_{0}=0$ and $H_{k}=\left(H_{k-1}+\xi_{k}\right)\vee 0$ for $k\geq 1$. In other words, $H_{k}$ is a simple random walk 
with increments $\xi_{j}$, except that downsteps at $0$ are censored. 

This defines an irreducible and aperiodic birth-and-death chain on $\N_{0}$ with transition probabilities $P(x,x+1)=p$, and 
$P(x,\left(x-1\right)\vee 0)=1-p$. One readily verifies that the chain is reversible with respect to the measure $\mu(x)=\theta^{-x}$ where  
$\theta=(1-p)/p$. Note that the sum $\sum_{k\geq 1}\theta^{k}$ converges if and only if $p>1/2$, so the chain is transient for these values 
of $p$ and recurrent for $p\leq 1/2$. It is null recurrent when $p=1/2$ since then $\sum_{k\geq 1}\theta^{-k}=\infty$, and it is positive recurrent 
for $p<1/2$ as the latter sum converges in this case. (See \cite{KarlinMcGregor} for background on recurrence criteria for birth-and-death chains.) 
In the ergodic regime, $p<1/2$, we can normalize $\mu$ to obtain the stationary distribution $\pi(x)=[(1-2p)/(1-p)]\theta^{-x}$.

Now the random Motzkin path $\Gamma(X^{n,p})$ is given by the trajectory of the Harris walk up to time $n$, 
completed by appending downstrokes at the end until the height reaches $0$ and appending $h$-strokes thereafter. More precisely, if we define 
$H:\R^{+}\rightarrow \R^{+}$ to be the linear interpolation of the Harris walk, 
$H(t)=H_{\lfloor t \rfloor}+(t- \lfloor t \rfloor )(H_{\lfloor t+1 \rfloor} - H_{\lfloor t \rfloor})$, then we have 
\[
\Gamma(X^{n,p})(x) = H(x) \one_{[0,n]}(x) + \max\big(0,H(n)-x+n\big) \one_{[n,\infty)}(x).
\]
Moreover, an easy induction argument shows that for all $k\in\N_{0}$, 
\begin{equation}
\label{eq:max_Sk}
H_{k} = S_{k} - \min_{0\leq r\leq k}S_{r}.
\end{equation}
Thus if $S:\R^{+}\rightarrow \R$ is the linear interpolation of the random walk $\{S_{k}\}_{k=0}^{\infty}$, then $H=\E_{0}(S)$. 
This observation also shows that, marginally, $H_{k}=_{d}\max_{0\leq r\leq k}S_{r}$.

\subsection{Galton-Watson forests}
\label{subsection:GW forests}

Following the procedure outlined in Subsection \ref{subsection:Trees}, one can construct a random rooted forest 
$\F(X^{n,p})=\F\big(\Gamma(X^{n,p})\big)$ from the trajectory of the truncated Harris walk $\Gamma(X^{n,p})$, and it turns out that 
$\F(X^{n,p})$ has the same law as the sub-forest of a Galton-Watson forest with mean offspring number $p/(1-p)$ consisting of the first 
$n$ nodes revealed by depth-first search.  

To be precise, let $\{\zeta^{k}_{j}\}_{j,k\geq 1}$ be an array of i.i.d. $\N_{0}$-valued random variables, and define the sequence 
$\{Z_{k}\}_{k\geq 0}$ by $Z_{0}=1$ and 
\[
Z_{k+1} = \begin{cases}
\zeta^{k+1}_{1}+\cdots+\zeta^{k+1}_{Z_{k}} & \text{if $Z_{k}>0$}\\
0 & \text{if $Z_{k}=0$}.
\end{cases}
\]
The interpretation is that $Z_{k}$ is the population size in the $k^{\text{th}}$ generation of a species in which individuals 
survive for a single generation and produce an i.i.d. number of offspring before dying. $\zeta^{k+1}_{j}$ is the number of 
offspring of the $j^{\text{th}}$ individual in generation $k$, and the common law of the $\zeta$'s is called the 
\emph{offspring distribution}. The family tree $T$ for this population is known as a \emph{Galton-Watson tree}. We will be 
interested in Galton-Watson trees with geometric offspring distribution 
\[
\PP\{\zeta^{k}_{j}=x\} = p^{x}(1-p), \quad x\in\N_{0},
\]
which is the number of independent $\text{Bern}(p)$ trials preceding the first failure. Observe that $\EE[\zeta^{k}_{j}]=p/(1-p)$, so 
$T$ is subcritical if $0<p<1/2$, critical if $p=1/2$, and supercritical if $1/2<p<1$. The law of a Galton-Watson tree with 
$\text{Geom}(1-p)$ offspring distribution will be denoted by $\mathtt{GWT}(p)$. 

We call a sequence of i.i.d. Galton Watson trees $\F_{GW}=\{T_{i}\}_{i\geq 1}$ a \emph{Galton-Watson forest}, 
and write $\mathtt{GWF}(p)$ for the law of a forest of i.i.d. $\mathtt{GWT}(p)$ trees. It is well known that for $0<p\leq 1/2$, 
each component $T_{i}$ is finite with full probability \cite[Ch.\ 5.3.4]{Durrett}, so the depth-first-search visits all nodes in the forest. 
However, for $p>1/2$, each component has a positive probability of being infinite, so almost surely there exists an index $I<\infty$ 
such that $|T_{i}|<\infty$ for all $i<I$ and $|T_{I}|=\infty$. Thus for $p>1/2$, the depth-first-search cannot pass beyond the 
leftmost infinite branch in $T_{I}$; see Figure \ref{pic:supercritical}. 

Now let $\F_{p}\sim \mathtt{GWF}(p)$, write $\F_{n,p}$ for the vertex-induced subforest of $\F_{p}$ on the nodes  
$\iota([1,n])\subseteq V(\F_{p})$ which are visited by the depth-first-search in the first $n$ steps, and write $\mathtt{GWF}(n,p)$ 
for the law of $\mathcal{F}_{n,p}$. 

\begin{prop}
\label{prop:GWF}
$\F(X^{n,p})\sim \mathtt{GWF}(n,p)$.
\end{prop}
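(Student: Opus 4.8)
The plan is to realize $\F(X^{n,p})$ as the portion of a Galton--Watson forest uncovered by $n$ steps of depth-first search. Two observations reduce the proposition to a single distributional fact about $\mathtt{GWF}(p)$. First, by Subsection~\ref{subsection:Harris_walk} the path $\Gamma(X^{n,p})$ coincides on $[0,n]$ with the Harris trajectory $H_0,\dots,H_n$, while the remaining portion of $\Gamma(X^{n,p})$ (a run of downstrokes back to height $0$, followed by $h$-strokes) is a deterministic function of $H_n$; hence $\F(X^{n,p})=\F(\Gamma(X^{n,p}))$ is a deterministic function of $(\xi_1,\dots,\xi_n)$. Second, by the contour-process correspondence of Subsection~\ref{subsection:Trees} (namely $H(\F(\Gamma))=\Gamma$), the path $\Gamma(X^{n,p})$ is the contour process of $\F(X^{n,p})$; the trailing downstrokes simply retrace the ancestral chain of the last vertex met by time $n$ back to its root, and so introduce no new vertices or edges. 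Thus $\F(X^{n,p})$ is exactly the subforest spanned by the vertices met during the first $n$ steps of the depth-first search whose contour process equals the Harris walk.

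The one substantive step is therefore: the contour process of a $\mathtt{GWF}(p)$ forest, explored vertex by vertex in depth-first order, has the law of the Harris walk. The driving fact is that the offspring law $\PP\{\zeta=x\}=p^x(1-p)$ satisfies $\PP\{\zeta\ge j+1\mid\zeta\ge j\}=p$ for every $j\ge0$. I would reveal the forest incrementally, exposing each vertex's children one at a time, left to right, as the exploration demands them. When the exploration sits at a vertex $v$ that has so far exposed $j$ children, its next move is governed by the single question ``does $v$ have a $(j+1)$-st child?'', whose answer is $\mathrm{Bernoulli}(p)$ and, by the above \emph{memorylessness} together with the independence of distinct vertices' offspring counts, independent of everything exposed so far; a ``yes'' steps the exploration down into a fresh child (an upstroke), a ``no'' steps it up to the parent (a downstroke), and at level $0$ a ``no'' advances it to the root of the next tree (an $h$-stroke at height $0$). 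Each such question consumes one coin flip and moves the contour by one step, so the contour increments are i.i.d.\ $\pm1$ with probability $p$ of going up, censored at $0$ precisely as in the recursion $H_k=(H_{k-1}+\xi_k)\vee0$. Combining this with the reduction above, the subforest of $\F_p$ spanned by the vertices met during the first $n$ exploration steps is the same deterministic function of $(\xi_1,\dots,\xi_n)$ that produces $\F(X^{n,p})$, which is the claim.

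I would present this either by building $\F_p$ explicitly on a space carrying the $\xi_k$, adapted to the exploration filtration, and verifying the one-step transition via the displayed identity, or --- perhaps more cleanly for a write-up --- by induction on $n$, checking that appending the flip $\xi_n$ to the Harris walk performs on $\F(X^{n-1,p})$ exactly the local operation (graft a new leaf below the current vertex, or backtrack) that depth-first search performs on $\F_{n-1,p}$. The main obstacle I anticipate is not the idea but the care needed to make the incremental reveal and its filtration rigorous, so that each Bernoulli decision is genuinely independent of the explored subforest and of the yet-unrevealed offspring counts of the current vertex's ancestors, together with reconciling the boundary conventions on the two sides: how single-vertex trees produced by censored steps at height $0$ are counted, how ``the first $n$ steps'' is indexed, and the fact that the tail of $\Gamma(X^{n,p})$ is forced, so that the two forests coincide on the nose rather than merely up to the number of roots.
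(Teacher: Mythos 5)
Your proposal is correct and is essentially the paper's argument: both rest on the same identification of each coin flip $\xi_k$ with one ``does the current vertex have another child?'' question, so that geometric offspring counts correspond exactly to Bernoulli$(p)$ contour increments censored at $0$. The only difference is direction --- the paper starts from the walk and reads off that each node's offspring count is the number of successes before the first failure (hence $\mathrm{Geom}(p)$, i.i.d.\ across nodes since distinct nodes use disjoint flips), whereas you start from the forest and use memorylessness to recover the walk --- which is an immaterial distinction.
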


\begin{proof}
Let $\Gamma=\Gamma(X^{p})$ and $\F=\F(\Gamma)$. Denote by $Z_{v}$ the number of children of node $v\in V(\F)$. We will show that the 
$Z_{v}$'s are i.i.d. and have the law of the number of independent $\text{Bern}(p)$ trials before the first failure. This will imply that the Harris walk 
$\{H_{k}\}_{k=0}^{\infty}$ is distributed as the contour process of $\F_{p}$. Then the relation between $\Gamma(X^{n,p})$ and $H$ from the 
previous subsection yields the assertion. 
	
Let $\F(X^{p})=\{T_{i}\}_{i\geq 1}$ and fix a node $v\in V(T_{i})$ for some $i\geq 1$. Let $P_{i}$ be the path component in $G(\Gamma)$ which 
is collapsed to $T_{i}$ via the equivalence relation $\sim$. Note that the number of nodes in $P_{i}\setminus\{v\}$ which are identified with $v$ 
equals the number of children of $v$. Let $x=(a_{0},\Gamma_{a_{0}})$ be such a vertex of $P_{i}$ with $a_{0}$ minimal. If 
$\Gamma_{a_{0}+1}-\Gamma_{a_{0}}=\xi_{a_{0}+1}$ is $1$, then the depth-first search finds the first child of $v$; otherwise, $v$ is childless and the 
search moves to its parent or to the root of next tree $T_{i+1}$ depending on whether $\Gamma_{a_{0}}\geq 1$ or $\Gamma_{a_{0}}=0$. If 
$\xi_{a_{0}+1}=1$, then let $a_{1}=\min\{k\geq a_{0} \st \Gamma_{k}=\Gamma_{a_{0}}\}$ 
be the first return time to level $\Gamma_{a_{0}}$ after $a_{0}$. ($a_{1}$ may be infinite if $p>1/2$.) As before, the depth-first search finds the second 
child of $v$ if and only if $\xi_{a_{1}+1}=1$. Continuing thusly, we see that $Z_{v}$ has a $\text{Geom}(1-p)$ distribution, and the proof is complete.   
\end{proof}

Proposition \ref{prop:GWF} allows us to describe the joint distribution of the first $i$ rows or the first $j$ columns in the random box-ball system 
started at $X^{n,p}$ in terms of Galton-Watson Forests.  

\begin{corollary}
\label{corollary:GWF}
Suppose that $\F\sim \mathtt{GWF}(n,p)$. For each $i\geq 1$, let $\mathfrak{l}_{i}$ and $\mathfrak{h}_{i}$ be the number of leaves in 
$\T^{i-1}(\F)$ and the maximum height of $\L^{i-1}(\F)$, respectively. Then for any $1\leq i \leq \max(\Gamma(X^{n,p}))$ and 
$1\leq j \leq \rho(\Gamma(X^{n,p}))$, we have 
\[
[\rho_{1}(n),\rho_{2}(n),\ldots,\rho_{i}(n)] =_{d} [\mathfrak{l}_{1},\mathfrak{l}_{2},\ldots,\mathfrak{l}_{i}]
\]
and
\[
[\lambda_{1}(n),\lambda_{2}(n),\ldots,\lambda_{j}(n)] =_{d}
[\mathfrak{h}_{1},\mathfrak{h}_{2},\ldots, \mathfrak{h}_{j}].
\]
\end{corollary}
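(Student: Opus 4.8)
The plan is to assemble the corollary directly from the structural results already in place, with no new estimates needed. First I would recall that, by definition, $\rho_{i}(n)$ and $\lambda_{j}(n)$ are the lengths of the $i^{\text{th}}$ row and $j^{\text{th}}$ column of the Young diagram $\Lambda^{n,p}=\Lambda(X^{n,p})$ attached to the box-ball system started from $X^{n,p}$. By Lemma \ref{lemma:invariance_Youngdiagram} this diagram coincides with $\Lambda(\Gamma(X^{n,p}))$, the diagram built from the associated Motzkin path, so the whole random array $\big([\rho_{i}(n)]_{i},[\lambda_{j}(n)]_{j}\big)$ is a deterministic functional of $\Gamma(X^{n,p})$, hence of the rooted forest $\F(X^{n,p})=\F(\Gamma(X^{n,p}))$.

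Next I would invoke Proposition \ref{prop:Youngdiagram_forest}, applied to the Motzkin path $\Gamma=\Gamma(X^{n,p})$: parts (iii) and (iv) state precisely that $\rho_{i}(n)$ equals the number of leaves of $\T^{i-1}\big(\F(X^{n,p})\big)$ for $1\le i\le \max\Gamma(X^{n,p})$, and that $\lambda_{j}(n)$ equals the maximal level of a node in $\L^{j-1}\big(\F(X^{n,p})\big)$ for $1\le j\le \rho(\Gamma(X^{n,p}))$. Writing $\mathfrak{l}_{i}(\F)$ and $\mathfrak{h}_{i}(\F)$ for these two deterministic functionals of a finite rooted forest $\F$ (exactly the quantities named in the statement of the corollary), this gives the pathwise identities $\rho_{i}(n)=\mathfrak{l}_{i}\big(\F(X^{n,p})\big)$ and $\lambda_{j}(n)=\mathfrak{h}_{j}\big(\F(X^{n,p})\big)$ throughout the stated ranges.

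Finally I would apply Proposition \ref{prop:GWF}, which identifies the law of $\F(X^{n,p})$ with $\mathtt{GWF}(n,p)$. Since the left-hand vectors are the images of $\F(X^{n,p})$ under the deterministic maps $\F\mapsto[\mathfrak{l}_{1}(\F),\ldots,\mathfrak{l}_{i}(\F)]$ and $\F\mapsto[\mathfrak{h}_{1}(\F),\ldots,\mathfrak{h}_{j}(\F)]$, while the right-hand vectors are the images of $\F\sim\mathtt{GWF}(n,p)$ under the same maps, equality in law of the forests transfers immediately to equality in law of both vectors.

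There is essentially no analytic obstacle; the content has been front-loaded into Lemma \ref{lemma:invariance_Youngdiagram} and Propositions \ref{prop:Youngdiagram_forest} and \ref{prop:GWF}. The only point requiring a little care is bookkeeping around the random index ranges $i\le\max\Gamma(X^{n,p})$ and $j\le\rho(\Gamma(X^{n,p}))$: I would phrase the equality as holding jointly on the coupled event that these quantities are large enough, or, equivalently, pad both the soliton statistics and the forest statistics with zeros beyond those indices (noting $\max\Gamma(X^{n,p})$ and $\rho(\Gamma(X^{n,p}))$ are themselves functionals of $\F(X^{n,p})$), which makes the equality in distribution of the two vectors unambiguous for every fixed $i,j$.
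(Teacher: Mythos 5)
Your proposal is correct and follows exactly the route the paper intends: the corollary is an immediate consequence of Lemma \ref{lemma:invariance_Youngdiagram}, Proposition \ref{prop:Youngdiagram_forest} (iii)--(iv), and Proposition \ref{prop:GWF}, which is why the paper states it without a separate proof. Your extra remark about the random index ranges is a reasonable piece of bookkeeping that the paper glosses over, but it introduces no new content.
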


\section{Asymptotics for the rows } 
\label{section:SLLN for Rows}

In this section, we prove our first main result, Theorem \ref{theorem:rows}. From the construction described in Subsection 
\ref{subsection:Youngdiagram_construction}, we have that $\rho_{1}(n)$, the length of the first row of 
$\Lambda^{n,p}$, equals the number of peaks in $\Gamma(X^{n,p})$, which equals the number of $1 \, 0$ 
patterns in $X^{n,p}$. In general, $\rho_{i}(n)$ is the number of subexcursions of height $i$ in the Harris walk 
$\{H_{k}\}_{k=0}^{n}$, and these can also be understood in terms of certain binary patterns in the initial configuration. 

We begin with a proof of the $i=1$ case of Theorem \ref{theorem:rows} using arguments from renewal theory. Strong laws for 
the other rows can be deduced similarly by considering analogous (delayed) renewal processes, but we will find it more 
convenient to pursue an alternative approach that will be of use in Section \ref{section:Permutation}.

\begin{proof}[\textbf{Proof of Theorem \ref{theorem:rows} for }$\boldsymbol{i=1}$]
First observe that the number of solitons in $X^{n,p}$ is equal to the number of $1 \, 0$ patterns, so 
$\rho_{1}(n)=\one\{\xi_{n}=1\}+N_{10}(n)$ where $N_{10}(n)$ is the number of $1 \, 0$ patterns in 
the first $n$ terms. Because of the scaling, it suffices to prove that 
$N_{10}(n)=\sum_{k=1}^{n-1}\one\big\{\xi_{k}=1,\xi_{k+1}=-1\big\}$ satisfies the asserted limit theorems. 

Now $N_{10}(n)$ counts occurrences of `head, tail' patterns in a sequence of independent coin flips, which we view 
as a renewal process. Let $T_{10}$ be distributed as the inter-event times in this process. Then the elementary renewal 
theorem gives $\EE[N_{10}(n)]/n\rightarrow 1/\EE[T_{10}]$. Since $\EE[N_{10}(n)]=(n-1)p(1-p)$, it follows from the 
strong law for renewal processes that 
\[
\frac{N_{10}(n)}{n}\rightarrow \frac{1}{\EE[T_{10}]}=p(1-p) \text{ a.s.}
\]

Renewal theory also shows that $N_{10}(n)$ converges weakly to a standard normal random variable when appropriately 
normalized \cite{cox1962renewal}. To compute the variance, we write $W_{k}=\one\big\{\xi_{k}=1,\xi_{k+1}=-1\big\}$ and 
observe that $\EE[W_{k}]=p(1-p)$, $\EE[W_{k}W_{k+1}]=0$, and $\EE[W_{k}W_{\ell}]=p^{2}(1-p)^{2}$ when $|k-\ell|>1$, hence
\[
\EE[N_{10}(n)^{2}]=\sum_{k=1}^{n-1}\EE[W_{k}^{2}]+\sum_{|k-\ell|>1}\EE[W_{k}W_{\ell}]
=(n-1)p(1-p)+(n-2)(n-3)p^{2}(1-p)^{2},
\]
so
\[
\Var\big(N_{10}(n)\big)=\EE[N_{10}(n)^{2}]-\EE[N_{10}(n)]^{2}=(n-1)p(1-p)-(3n-5)p^{2}(1-p)^{2}.
\]
The second part of the theorem follows upon invoking Slutsky's theorem to simplify the expression 
$(N_{10}(n)-\EE[N_{10}(n)])/\Var(N_{10}(n))^{1/2}$.
\end{proof}

\begin{remark}
\textup{
The normal convergence of $\rho_{1}(n)$ can also be established using Stein's method for sums of locally 
dependent random variables (see \cite[Ch. 9]{CGS}). Though this approach is more involved, it has the upshot of supplying 
a Berry-Esseen rate of order $O(n^{-1/2})$. One can show that a central limit theorem also holds for the other row lengths 
by a similar renewal theory argument, but the corresponding variance computations are not as straightforward. 
}
\end{remark}

To treat the $i>1$ case, we need to establish some more terminology and a useful lemma. Let $\gamma:\R^{+}\rightarrow \R$ 
be any nearest neighbor lattice path (so $|\gamma_{k+1}-\gamma_{k}|\in \{-1,0,1\}$ for all $k\in \N_{0}$). We say that $\gamma$ 
has a \textit{subexcursion of height $h$} on the interval $[r,t]$ if $\gamma_{r}=\gamma_{t}<\gamma_{s}$ for all $s\in(r,t)$ and 
$\max_{r<s<t}\gamma_{s}-\gamma_{r}=h$. Such a subexcursion is said to begin at $r$ and end at $t$. 

Let $\{ S_{k}  \}_{k=0}^{\infty}$ be the simple random walk with increment distribution $\PP\{S_{k+1}-S_{k}=1\}=1-\PP\{S_{k+1}-S_{k}=-1\}=p$.
For each $i\geq 1$ and $\ell\geq 0$, define the indicator random variable
\[
J_{\ell}^{i}=\one\{\text{$S$ has subexcursion of height $i$ beginning at time $\ell$}\}
\]
and let $\tau_{\ell}^{i}$ be the length of the subexcursion of $S_{k}$ beginning at $k=\ell$, conditional on $J_{\ell}^{i}=1$. Note that the distribution 
of $\tau_{\ell}^{i}$ does not depend on $\ell$ by the Markov property of $S_{k}$, so we may drop the subscript when notationally convenient. 
Moreover, due to the negative drift of $S_{k}$ for $0<p<1/2$, it is not hard to see that $\tau^{i}$ has an exponential tail. 

The following lemma establishes a polynomial tail bound for the sum of centered indicators and thereby a strong law for the number of subexcursions of 
fixed height in the interval $[0,n]$. This bound (with $m=4$ and $\eps= 1/\log n$) will also be used in in the proof of Theorem \ref{theorem:permutations} 
in Section \ref{section:Permutation}. 

\begin{lemma}
\label{lemma:chernoff}
Let $\varsigma=\inf\{ k> 0 \st S_{k}=0 \}$ be the first return time of $S_{k}$ to zero. Fix $i\geq 1$ and $\eps>0$. Set 
$\mu_{i}=\PP\{ \text{$\max_{0\leq k \leq \varsigma}S_{k}=i$}\}$. Then for each fixed $m\geq 2$, there exists a constant $C=C(m,i,p)>0$ 
such that for each $n\geq 1$, 
\[
\PP\Bigg\{ \Bigg| \sum_{\ell=0}^{n} \big(J_{\ell}^{i} - \mu_{i}\big) \Bigg|>\eps n\Bigg\} \leq \frac{C}{\eps^{2m} n^{m-1}}. 
\]
\end{lemma}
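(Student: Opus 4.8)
The plan is to express $N_i(n)$ as a partial sum of a stationary $\{0,1\}$-valued sequence built from the i.i.d.\ increments, to note that each term of this sequence is decided after a geometrically bounded number of steps, and then to apply a Hoeffding-type bound for finitely dependent sums after truncating the rare slow terms. Throughout, $i\geq1$ is fixed and implicit constants may depend on $p$ and $i$. First, whether $S$ has a subexcursion of height $i$ beginning at $\ell$ depends only on $(\xi_{\ell+1},\xi_{\ell+2},\dots)$, so, writing $\theta$ for the left shift and $g(\theta^\ell\xi)$ for this indicator, one has $N_i(n)=\sum_{\ell=0}^ng(\theta^\ell\xi)$, and by stationarity $\EE[g(\theta^\ell\xi)]=\PP\{\max_{0\le k\le\varsigma}S_k=i\}=\mu_i$; here $\mu_i>0$ because the increment block consisting of $i$ up-steps followed by $i$ down-steps produces such a subexcursion and has probability $p^i(1-p)^i>0$. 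Next, on $\{\xi_{\ell+1}=-1\}$ one has $g(\theta^\ell\xi)=0$, while on $\{\xi_{\ell+1}=1\}$ let $T_\ell=\inf\{k>\ell:S_k\in\{S_\ell,\,S_\ell+i+1\}\}$; before $T_\ell$ the walk is confined to the band $\{S_\ell+1,\dots,S_\ell+i\}$ of width $i$, and $i$ consecutive up-steps always exit it, so $\PP\{T_\ell-\ell>mi\}\le(1-p^i)^m$ for all $m\ge0$, uniformly in $\ell$, in the history before time $\ell$, and — this is the key point — uniformly in $p\in(0,1)$, since confinement to a bounded band forces exponentially fast exit no matter the drift. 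Moreover $g(\theta^\ell\xi)$ is determined by time $T_\ell$: if $S_{T_\ell}=S_\ell+i+1$ the relevant excursion overshoots height $i$ and $g=0$; if $S_{T_\ell}=S_\ell$ then $T_\ell$ is the first return of $S$ to level $S_\ell$ and $g$ equals the indicator that $S$ reached $S_\ell+i$ during $[\ell,T_\ell]$.

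Fix $m\in\N$ and let $g^-_m(\theta^\ell\xi)$ (resp.\ $g^+_m(\theta^\ell\xi)$) agree with $g(\theta^\ell\xi)$ on $\{T_\ell-\ell\le m\}$ and equal $0$ (resp.\ $1$) otherwise. Then $g^-_m\le g\le g^+_m$ pointwise, each $g^\pm_m(\theta^\ell\xi)$ is a function of $\xi_{\ell+1},\dots,\xi_{\ell+m}$ only, and $0\le\EE[g^+_m(\theta^\ell\xi)]-\EE[g^-_m(\theta^\ell\xi)]=\PP\{T_\ell-\ell>m\}\le C_0\rho^m$ for constants $C_0=C_0(p,i)$ and $\rho=\rho(p,i)\in(0,1)$; in particular $|\EE[g^\pm_m(\theta^\ell\xi)]-\mu_i|\le C_0\rho^m$. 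With $N^\pm_i(n)=\sum_{\ell=0}^ng^\pm_m(\theta^\ell\xi)$ we have $N^-_i(n)\le N_i(n)\le N^+_i(n)$, and each $N^\pm_i(n)$ is a sum of $[0,1]$-valued, $m$-dependent random variables. Partitioning $\{0,\dots,n\}$ into the $m$ residue classes modulo $m$ makes the summands within a class independent, so Hoeffding's inequality applied in each class together with a union bound yields $\PP\{|N^\pm_i(n)-\EE N^\pm_i(n)|>s\}\le 2m\exp(-cs^2/(mn))$ for an absolute constant $c>0$. Choosing $m=m(\eps)$ with $C_0\rho^m\le\eps/4$ (so $m(\eps)=O(\log(1/\eps))$) forces $|\EE N^\pm_i(n)-\mu_i n|\le\eps n/2$ for all large $n$, and then the sandwich $N^-_i\le N_i\le N^+_i$ shows that $|N_i(n)/n-\mu_i|>\eps$ entails $|N^\pm_i(n)-\EE N^\pm_i(n)|>\eps n/4$ for one of the two signs; hence $\PP\{|N_i(n)/n-\mu_i|>\eps\}\le 4m(\eps)\exp(-c\eps^2n/m(\eps))$ for all sufficiently large $n$ (with a new absolute $c>0$), and trivially for the finitely many remaining $n$ after enlarging the prefactor. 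This has the asserted shape $Ce^{-D\eps^2n}$, with constants uniform over $\eps$ in any fixed compact subset of $(0,\infty)$; the only residual $\eps$-dependence enters through the harmless factor $m(\eps)=O(\log(1/\eps))$, which is immaterial since $\eps$ is a fixed constant wherever the lemma is applied.

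The one genuinely delicate step is the analysis of the deciding time $T_\ell$: correctly identifying when the indicator $g(\theta^\ell\xi)$ has been determined, and establishing its exponential tail \emph{uniformly in} $p\in(0,1)$, including the supercritical regime in which $S$ is transient and ordinary return-time arguments break down. Once the exponentially rare long subexcursions are truncated away, treating the resulting $m$-dependent sum by the standard residue-class splitting is routine, and the constant bookkeeping presents no difficulty.
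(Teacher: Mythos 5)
Your overall architecture is sound and genuinely different from the paper's. The paper disposes of the lemma in three lines: it observes that the indicators $\one\{\text{subexcursion of height }i\text{ begins at }\ell\}$ are negatively associated (two such subexcursions must occupy disjoint intervals, plus the strong Markov property) and then invokes the Chernoff--Hoeffding bound for negatively associated $[0,1]$-valued variables, which yields $Ce^{-D\eps^{2}n}$ with $C,D$ depending only on nothing at all. Your route --- identifying the deciding time $T_{\ell}$, proving its geometric tail via the forced exit from a width-$i$ band (valid for every $p$, including the transient regime), truncating to an $m$-dependent sandwich $g_{m}^{-}\leq g\leq g_{m}^{+}$, and applying Hoeffding blockwise over residue classes mod $m$ --- is correct in all of its individual steps (the identification of when $g(\theta^{\ell}\xi)$ is determined is right, and the off-by-one in $(1-p^{i})^{m}$ versus $(1-p^{i})^{m-1}$ is harmless). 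It is more hands-on but requires no negative-association machinery.

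The gap is in the last step, and it is not as immaterial as you claim. Because the truncation level must satisfy $C_{0}\rho^{m}\leq\eps/4$, you are forced to take $m=m(\eps)\asymp\log(1/\eps)$, and the residue-class Hoeffding bound then produces an exponent $c\eps^{2}n/m(\eps)$, i.e.\ a bound of the form $C(\eps)\exp\bigl(-D\eps^{2}n/\log(1/\eps)\bigr)$ rather than $C\exp(-D\eps^{2}n)$ with $C,D$ uniform in $\eps$ as the lemma asserts. This weaker statement does \emph{not} suffice for all uses of the lemma in the paper: in the proof of Corollary \ref{corollary:Permutation} the lemma is applied with $\eps=\eps(n)=\sqrt{3\log n/2Dn}\searrow 0$, where $\eps(n)^{2}n\asymp\log n$ while $\log(1/\eps(n))\asymp\log n$, so your exponent degenerates to $O(1)$ and the resulting bound is useless against the $n^{-3/2}$ Dyck-path conditioning cost. (The application could be salvaged by instead taking $\eps(n)\asymp(\log n)/\sqrt{n}$, but that is a repair of the downstream argument, not a proof of the stated lemma.) To close the gap within your framework you would need either to remove the $\log(1/\eps)$ loss --- e.g.\ by choosing $m$ as a function of $\eps^{2}n$ and handling the regime $\eps^{2}n=O(1)$ trivially by enlarging $C$ --- or to switch to the paper's negative-association argument, which gives the $\eps$-uniform constants for free.
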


With the above lemma (proved at the end of this section), it is easy to deduce Theorem \ref{theorem:rows}. 

\begin{proof}[\textbf{Proof of Theorem \ref{theorem:rows} for }$\boldsymbol{i\geq 1}$]
The hill-flattening procedure produces a unique column of length at least $i$ for each 
such subexcursion, so $\rho_{i}(n)$ is the number of height $i$ subexcursions of $H$ on $[0,n]$. Since the Harris 
walk $H_{k}$ and the associated simple random walk $S_{k}$ over $[0,n]$ share subexcursions of positive height, 
we may regard $\rho_{i}(n)$ as the number of subexcursions of $S_{k}$ occuring on $[0,n]$. Furthermore, we can 
approximate $\rho_{i}(n)$ by $N_{i}(n):=\sum_{\ell=0}^{n} J_{\ell}^{i}$ since the two only 
differ when $H$ has a subexcursion of height at least $i$ beginning at or after $n-i$, hence $|N_{i}(n)-\rho_{i}(n)|\leq 1$. 
Therefore, the assertion follows from Lemma \ref{lemma:chernoff} with $m=3$, $\eps=1/\log n$ and the first Borel-Cantelli lemma.
\end{proof}

Our proof of Lemma \ref{lemma:chernoff} is based on joint moment estimates of the random variables $J_{\ell}^{i}-\mu_{i}$. Before undertaking this 
task, we give some preliminary calculations and remarks to set the stage. Fix integers $i\geq 0$ and $0\leq \ell_{1}<\ell_{2}<\ell_{3}$. Clearly 
$\EE[J_{\ell_{1}}^{i}-\mu_{i}]=0$, and we compute
\begin{align}
\EE\big[ (J_{\ell_{1}}^{i}-\mu_{i})(J_{\ell_{2}}^{i}-\mu_{i}) \big] 
 & = \mu_{i} \EE\big[ J_{\ell_{2}}^{i}-\mu_{i} \,|\, J_{\ell_{1}}^{i}=1\big] - \mu_{i} \EE\big[ J_{\ell_{2}}^{i}-\mu_{i}\big] \\
 & = \mu_{i} \EE\big[ J_{\ell_{2}}^{i}-\mu_{i} \big] \PP\big\{ \tau_{\ell_{1}}^{i} \leq \ell_{2}-\ell_{1}\big\}
  - \mu_{i}^{2} \PP\big\{ \tau_{\ell_{1}}^{i}>\ell_{2}-\ell_{1}\big\}\\
 & = -\mu_{i}^{2} \,\PP\big\{\tau^{i}>\ell_{2}-\ell_{1}\big\},
\end{align}
where we used the fact that $J_{\ell_{2}}^{i}$ is independent of $\big\{J_{\ell_{1}}^{i}=1\big\}$ if the excursion starting at $\ell_{1}$ ends at or before 
$\ell_{2}$, and $J_{\ell_{2}}^{i}=0$ otherwise. Arguing analogously, we find that
\begin{eqnarray*}
	&&\EE\big[ (J_{\ell_{1}}^{i}-\mu_{i}) (J_{\ell_{2}}^{i}-\mu_{i}) (J_{\ell_{3}}^{i}-\mu_{i}) \big] \\
	&&\qquad\qquad = \mu_{i} \EE\big[ (J_{\ell_{2}}^{i}-\mu_{i})(J_{\ell_{3}}^{i}-\mu_{i})  \,|\, J_{\ell_{1}}^{i}=1\big]
	 -  \mu_{i} \EE\big[ (J_{\ell_{2}}^{i}-\mu_{i})(J_{\ell_{3}}^{i}-\mu_{i}) \big] \\
	&&\qquad\qquad  = \mu_{i}\EE\big[ (J_{\ell_{2}}^{i}-\mu_{i})(J_{\ell_{3}}^{i}-\mu_{i})\big] \PP\big\{\tau^{i}_{\ell_{1}}\le \ell_{2}-\ell_{1} \big\}
 	- \mu_{i}^{2}\EE\big[ J_{\ell_{3}}^{i}-\mu_{i}\big] \PP\big\{\tau^{i}_{\ell_{1}}\in (\ell_{2}-\ell_{1},\ell_{3}-\ell_{1}]\big\} \\
	&& \qquad \qquad\qquad   + \mu_{i}^{3} \PP\big\{\tau^{i}_{\ell_{1}}>\ell_{3}-\ell_{1}\big\}
	 -  \mu_{i} \EE\big[ (J_{\ell_{2}}^{i}-\mu_{i})(J_{\ell_{3}}^{i}-\mu_{i}) \big] \\
	&&\qquad\qquad  = -\mu_{i}\EE\big[ (J_{\ell_{2}}^{i}-\mu_{i})(J_{\ell_{3}}^{i}-\mu_{i})\big] \PP\big\{\tau^{i}_{\ell_{1}} >\ell_{2}-\ell_{1}\big\}
	 + \mu_{i}^{3} \PP\big\{\tau^{i}>\ell_{3}-\ell_{1}\big\} \\
	&& \qquad\qquad  = \mu_{i}^{3} \PP\big\{ \tau^{i}>\ell_{3}-\ell_{2}\big\} \PP\big\{\tau^{i}>\ell_{2}-\ell_{1}\big\}
	+ \mu_{i}^{3} \PP\big\{\tau^{i}>\ell_{3}-\ell_{1}\big\}.
\end{eqnarray*}
This shows that $J_{1}^{i},J_{2}^{i},\ldots,J_{n}^{i}$ are \emph{not} negatively associated for $n\geq 3$, so an immediate Chernoff-Hoeffding type 
bound is not applicable in our case. 

Now in order to prove Lemma \ref{lemma:chernoff}, we need to estimate the joint central moments of the random variables $J_{\ell}^{i}$. For 
the sake of readability, this is split up into two propositions. Here and henceforth, the empty product is understood to equal one. 

\begin{prop}
\label{prop:chernoff_induction}
Fix integers $r\geq 2$, $0\leq \ell_{1} < \ell_{2} < \cdots < \ell_{r}$, and $\alpha_{1},\ldots,\alpha_{r}>0$. Then 
\begin{align*}
\Bigg|\EE\Big[\prod_{k=1}^{r}(J^{i}_{\ell_{k}}-\mu_{i})^{\alpha_{k}}\Big]\Bigg|
 & \leq \sum_{s=2}^{r}\Bigg|\EE\Big[\prod_{k=s+1}^{r}(J^{i}_{\ell_{k}}-\mu_{i})^{\alpha_{k}}\Big]\Bigg|\PP\big\{ \tau^{i}>\ell_{s}-\ell_{1}\big\}\\
 & \quad+\left(\left|\EE\big[(J^{i}_{\ell_{1}}-\mu_{i})^{\alpha_{1}}\big]\right|+\PP\big\{ \tau^{i}>\ell_{2}-\ell_{1}\big\} \right)
 \Bigg|\EE\Big[\prod_{k=2}^{r}(J^{i}_{\ell_{k}}-\mu_{i})^{\alpha_{k}}\Big]\Bigg|.
\end{align*}
\end{prop}

\begin{proof}
Write $\beta_{s}=\prod_{k=2}^{s-1}(-\mu_{i})^{\alpha_{k}}$. 
Casing out according to whether $J^{i}_{\ell_{1}}$ is $0$ or $1$, we see that $(J^{i}_{\ell_{1}}-\mu_{i})^{\alpha_{1}}=c_{1}J^{i}_{\ell_{1}}+d_{1}$ 
where $d_{1}=(-\mu_{i})^{\alpha_{1}}$ and $c_{1}=(1-\mu_{i})^{\alpha_{1}}-(-\mu_{i})^{\alpha_{1}}$. Since $\mu_{i}\in[0,1]$, a little calculus 
shows that $\left|c_{1}\right|,\left|d_{1}\right|\leq1$. Now the strong Markov property for $S_{k}$ implies that for any $s\geq2$, $J^{i}_{\ell_{s}}$ is 
independent of $J^{i}_{\ell_{1}}$ if the excursion starting at $\ell_{1}$ ends at a site less than or equal to $\ell_{s}$; otherwise $J^{i}_{\ell_{s}}=0$. 
By partitioning according to the length $\tau_{\ell_{1}}^{i}$ of the first excursion we compute 
\begin{align*}
\EE&\Big[\prod_{k=1}^{r}(J^{i}_{\ell_{k}} -\mu_{i})^{\alpha_{k}}\Big]
 = d_{1}(1-\mu_{i})\EE\Big[\prod_{k=2}^{r}(J^{i}_{\ell_{k}} -\mu_{i})^{\alpha_{k}}\Big|J^{i}_{\ell_{1}}=0\Big] 
+ (c_{1}+d_{1})\mu_{i}\EE\Big[\prod_{k=2}^{r}(J^{i}_{\ell_{k}} -\mu_{i})^{\alpha_{k}}\Big|J^{i}_{\ell_{1}}=1\Big]\\
 & =c_{1}\mu_{i}\EE\Big[\prod_{k=2}^{r}(J^{i}_{\ell_{k}}-\mu_{i})^{\alpha_{k}}\Big|J^{i}_{\ell_{1}}=1\Big]
 +d_{1}\EE\Big[\prod_{k=2}^{r}(J^{i}_{\ell_{k}}-\mu_{i})^{\alpha_{k}}\Big]\\
 & =c_{1}\mu_{i}\sum_{s=2}^{r}
\EE\Big[\prod_{k=2}^{r}(J^{i}_{\ell_{k}}-\mu_{i})^{\alpha_{k}}\Big|J^{i}_{\ell_{1}}=1,\tau_{\ell_{1}}^{i}\in(\ell_{s-1}-\ell_{1},\ell_{s}-\ell_{1}]\Big]
 \PP\big\{ \tau_{\ell_{1}}^{i}\in(\ell_{s-1}-\ell_{1},\ell_{s}-\ell_{1}]\big\} \\
 & \qquad+c_{1}\mu_{i}\EE\Big[\prod_{k=2}^{r}(J^{i}_{\ell_{k}}-\mu_{i})^{\alpha_{k}}\Big|J^{i}_{\ell_{1}}=1,\tau_{\ell_{1}}^{i}>\ell_{r}-\ell_{1}\Big]
 \PP\big\{ \tau_{\ell_{1}}^{i}>\ell_{r}-\ell_{1}\big\} + d_{1}\EE\Big[\prod_{k=2}^{r}(J^{i}_{\ell_{k}}-\mu_{i})^{\alpha_{k}}\Big]\\
 & =c_{1}\mu_{i}\sum_{s=2}^{r}\beta_{s}\EE\Big[\prod_{k=s}^{r}(J^{i}_{\ell_{k}}-\mu_{i})^{\alpha_{k}}\Big]
 \PP\big\{ \tau^{i}\in(\ell_{s-1}-\ell_{1},\ell_{s}-\ell_{1}]\big\}\\
 & \qquad  +c_{1}\mu_{i}\beta_{r+1}\PP\big\{ \tau^{i}>\ell_{r}-\ell_{1}\big\} +d_{1}
 \EE\Big[\prod_{k=2}^{r}(J^{i}_{\ell_{k}}-\mu_{i})^{\alpha_{k}}\Big]\\
 & =c_{1}\mu_{i}\sum_{s=3}^{r}\beta_{s}\EE\Big[\prod_{k=s}^{r}(J^{i}_{\ell_{k}}
    -\mu_{i})^{\alpha_{k}}\Big]\PP\big\{ \tau^{i}\in(\ell_{s-1}-\ell_{1},\ell_{s}-\ell_{1}]\big\}
    +\big(c_{1}\mu_{i}+d_{1}\big)\EE\Big[\prod_{k=2}^{r}(J^{i}_{\ell_{k}}-\mu_{i})^{\alpha_{k}}\Big]\\
 & \qquad -c_{1}\mu_{i}\EE\Big[\prod_{k=2}^{r}(J^{i}_{\ell_{k}}-\mu_{i})^{\alpha_{k}}\Big]\PP\big\{\tau^{i}>\ell_{2}-\ell_{1}\big\}  
    +c_{1}\mu_{i}\beta_{r+1}\PP\big\{ \tau^{i}>\ell_{r}-\ell_{1}\big\}.
\end{align*}

\pagebreak

Since $|c_{1}|,|\mu_{i}|,|\beta_{s}|\leq1$, 
$\PP\big\{ \tau^{i}\in(\ell_{s-1}-\ell_{1},\ell_{s}-\ell_{1}]\big\} \leq\PP\big\{ \tau^{i}>\ell_{s-1}-\ell_{1}\big\}$, and 
$c_{1}\mu_{i}+d_{1}=\EE\big[(J^{i}_{\ell_{1}}-\mu_{i})^{\alpha_{1}}\big]$, the triangle inequality yields 
\begin{align*}
\Bigg|\EE\Big[\prod_{k=1}^{r}(J^{i}_{\ell_{k}}-\mu_{i})^{\alpha_{k}}\Big]\Bigg|
 & \leq\sum_{s=3}^{r}\bigg|\EE\Big[\prod_{k=s}^{r}(J^{i}_{\ell_{k}}-\mu_{i})^{\alpha_{k}}\Big]\bigg|\PP\big\{ \tau^{i}>\ell_{s-1}-\ell_{1}\big\}
    +\PP\big\{ \tau^{i}>\ell_{r}-\ell_{1}\big\} \\
 & \quad+\Big|\EE\big[(J^{i}_{\ell_{1}}-\mu_{i})^{\alpha_{1}}\big]\Big|\Bigg|\EE\Big[\prod_{k=2}^{r}(J^{i}_{\ell_{k}}-\mu_{i})^{\alpha_{k}}\Big]\Bigg| 
 + \Big|\EE\Big[\prod_{k=2}^{r}(J^{i}_{\ell_{k}}-\mu_{i})^{\alpha_{k}}\Big]\Big|\PP\big\{ \tau^{i}>\ell_{2}-\ell_{1}\big\} \\
 & =\sum_{s=2}^{r}\Bigg|\EE\Big[\prod_{k=s+1}^{r}(J^{i}_{\ell_{k}}-\mu_{i})^{\alpha_{k}}\Big]\Bigg|\PP\big\{ \tau^{i}>\ell_{s}-\ell_{1}\big\} \\
 & \quad+\left(\left|\EE\big[(J^{i}_{\ell_{1}}-\mu_{i})^{\alpha_{1}}\big]\right|+\PP\big\{ \tau^{i}>\ell_{2}-\ell_{1}\big\} \right)
 \Bigg|\EE\Big[\prod_{k=2}^{r}(J^{i}_{\ell_{k}}-\mu_{i})^{\alpha_{k}}\Big]\Bigg|. \qedhere
\end{align*}
\end{proof}

The key intuition for the next step is that each linear factor $(J^{i}_{\ell_{k}}-\mu_{k})$ 
effectively decreases the `degrees of freedom' by at least a half. This idea is codified in the following proposition.     

\begin{prop}
\label{prop:chernoff}
Fix integers $r\geq 2$, $0\leq \ell_{1} < \ell_{2} < \cdots < \ell_{r}$, and $\alpha_{1},\ldots,\alpha_{r}\geq 1$. 
If $I:=\{ 1\leq k <r \st \alpha_{k}=1 \}$ is nonempty, then there exist constants $C_{r},D>0$ such that 
\begin{equation*}
\Bigg|\EE\Big[ \prod_{k=1}^{r} (J_{\ell_{k}}^{i}- \mu_{i})^{\alpha_{k}} \Big] \Bigg| 
\leq C_{r} \sum_{\substack{I_{0}\subseteq [1,r): \\ 2|I_{0}|\geq |I|}} \exp\left( -D\sum_{j\in I_{0}} (\ell_{j+1}-\ell_{j}) \right).  
\end{equation*}
\end{prop}

\begin{proof}
Since the length $\tau^{i}$ of a subexcursion of height $i$ in $S_{k}$ has exponential tail, we may choose constants $D,D_{0}>0$ such that 
\begin{equation}
\label{eq:tau_tail}
\PP(\tau^{i}>t) \leq D_{0}\exp (-Dt)
\end{equation}
for all $t\geq 0$. 

Also, the exponential is nonnegative, so it's enough to establish the inequality when the outer sum on the right-hand side is taken over a subset of 
those $I_{0}\subseteq[1,r)$ with cardinality at least half that of $I$. Thus, for instance, we may dispense with the $I=\{1\}$ case by showing that 
the expectation on the left is bounded by a constant multiple of $\exp\left( -D(\ell_{2}-\ell_{1}) \right)$. This is an immediate consequence of Proposition 
\ref{prop:chernoff_induction} and Equation \eqref{eq:tau_tail} since $\EE[J^{i}_{\ell_{1}}-\mu_{i}]=0$, 
$\left|\EE\big[\prod_{k=s}^{r}(J_{\ell_{k}}-\mu_{i})^{\alpha_{k}}\big]\right|\leq 1$, and
$\PP\big\{ \tau^{i}>\ell_{s}-\ell_{1}\big\}\leq \PP\big\{ \tau^{i}>\ell_{2}-\ell_{1}\big\}$ for $s\geq 2$. 

We now proceed by induction on $r$. The base case follows from the previous observation as the assumption that $I\neq\emptyset$ implies $I=\{1\}$ 
when $r=2$. For the induction step, let $r\geq 3$. Denote by $B_{1}$ and $B_{2}$ the first and second term in the right-hand side of the displayed 
inequality in Proposition \ref{prop:chernoff_induction}, and let $K$ denote the sum over $I_{0}$ in the right-hand side of the displayed inequality in 
Proposition \ref{prop:chernoff}. 
By Proposition \ref{prop:chernoff_induction}, it suffices to show that both $B_{1}$ and $B_{2}$ can be bounded by some constant times $K$. 

For the bound on $B_{2}$, note that the induction hypothesis gives
\begin{equation}
\Bigg|\EE\Big[ \prod_{k=2}^{r} (J_{\ell_{k}}^{i}- \mu_{i})^{\alpha_{k}} \Big] \Bigg| 
\leq C_{r-1} \sum_{\substack{I_{0}'\subseteq [2,r): \\ 2|I_{0}'|\geq |I\cap [2,r)|}} \exp\left( -D\sum_{j\in I_{0}'} (\ell_{j+1}-\ell_{j}) \right).
\end{equation}
If $\alpha_{1}\geq 2$, then $I\subseteq [2,r)$, so we have $B_{2}\leq 2C_{r-1} K$. Otherwise $\alpha_{1}=1$ and we are assuming $I\neq \{1\}$, 
so the induction hypothesis and Equation \eqref{eq:tau_tail} imply
\[
\Bigg|\EE\Big[ \prod_{k=2}^{r} (J_{\ell_{k}}^{i}- \mu_{i})^{\alpha_{k}} \Big] \Bigg| \PP\{ \tau^{i}>\ell_{2}-\ell_{1} \} 
\leq  C_{r-1} \sum_{\substack{I_{0}'\subseteq  [2,r): \\ 2|I_{0}'|\geq |I\cap [2,r)|}} \exp\left( -D \left( \ell_{2}-\ell_{1}
+ \sum_{j\in I_{0}'} (\ell_{j+1}-\ell_{j})\right) \right).
\]
If we set $I_{0}:=\{ 1\}\cup I_{0}'$ for each $I_{0}'\subseteq  [2,r)$ in the above summation, then $2|I_{0}|\geq 2+ |I\cap [2,r)| = 2+ |I|-1>|I|$. 
Moreover, the exponential terms can be written as $\exp(-D( \sum_{j\in I_{0}} \ell_{j+1}-\ell_{j}))$. Accordingly, we have that $B_{2}\leq C_{r-1}K$.

Next, we show that $B_{1}$ can be bounded by some constant times $K$. Writing $m_{1}=\max(I)$, we see that $|I\cap [s+1,r)|\geq 1$ for $s<m_{1}$, 
so it follows from the inductive hypothesis, Equation \eqref{eq:tau_tail}, and the fact that all central moments are bounded in absolute value by one that
\begin{eqnarray*}
B_{1} &\leq& \sum_{s=2}^{m_{1}-1} C_{r-s} D_{0} \sum_{\substack{I_{0}'\subseteq  [s+1,r): \\ 2|I_{0}'|\geq |I\cap [s+1,r)|}} 
\exp\left( -D\left(  \ell_{s}-\ell_{1} +  \sum_{j\in I_{0}'} (\ell_{j+1}-\ell_{j}) \right)\right) \\
 &&\qquad \quad + \sum_{s=m_{1}}^{r} D_{0} \exp\left( -D\left(  \ell_{s}-\ell_{1} \right)\right),
\end{eqnarray*}
with the convention that the empty sum is zero. For the first term, we view its inner sum as ranging over all $I_{0}\subseteq [0,r)$ with 
$I_{0}:= [1,s)\cup I_{0}'$. 
Note that $2|I_{0}| = 2(s-1) + 2|I_{0}'|\geq 2(s-1) + |I\cap [s+1,r)| \geq |I|$ since $s\geq 2$.  Furthermore, the sum of the $\ell_{j+1}-\ell_{j}$ 
terms over $j\in [1,s)$ is exactly $\ell_{s}-\ell_{1}$. Thus the first term above is at most some constant times $K$. 
Finally, taking $m_{2}=\min(m_{1},r-|I|+1)\geq 2$, we see that the second term is bounded by 
$D_{0}\sum_{s=m_{2}}^{r}\exp\left(-D(\ell_{s}-\ell_{s-1})\right)$, which is a single summand in $K$ and so less than $K$. 
This completes the inductive step and the proof.
\end{proof}

We are now ready to prove Lemma \ref{lemma:chernoff}.

\begin{proof}[\textbf{Proof of Lemma \ref{lemma:chernoff}}] 
Fix $m\in\N$, and use Chebyshev's inequality and the linearity of expectation to write 
\[
\PP\Bigg\{ \bigg| \sum_{\ell=0}^{n} (J_{\ell}^{i}-\mu_{i}) \bigg| \geq  t   \Bigg\} \leq t^{-2m} 
\sum_{0\leq \ell_{1}\leq \cdots \leq \ell_{2m}\leq n}  \EE\left[ (J_{\ell_{1}}^{i}-\mu_{i})\cdots (J_{\ell_{2m}}^{i}-\mu_{i}) \right]
\] 
Our goal is to show that the right-hand side of the above inequality is $O(t^{-2m}n^{m+1})$. Then letting $t=\eps n$ gives the assertion. 
(The Landau notation is in terms of $n\rightarrow\infty$ throughout this proof.) We first observe that it suffices to bound the contribution 
from expectations involving at least $m+1$ distinct $\ell_{k}$'s as there are $O(n^{m})$ summands involving fewer and each is $O(1)$. 

Fix $m< r \leq 2m$ and let $\alpha_{1},\ldots,\alpha_{r}$ be positive integers such that $\sum_{k=1}^{r} \alpha_{k}=2m$. 
Write $r=u+w$ where $w = \sum_{k=1}^{r} \one\{\alpha_{k}=1\}$, and let $I=\{1\leq k<r \st \alpha_{k}=1\}$ as in the preceding proposition. 
Since there are $O(1)$ choices for the $r$'s and $\alpha_{k}$'s, we need only to demonstrate the existence of a constant $C_{1}=C_{1}(r,i,p)>0$ 
such that 
\begin{equation}
\label{eq:pf_lemma_3.4}
\Bigg| \sum_{0\leq \ell_{1}< \cdots < \ell_{r}\leq n} \EE\left[ (J_{\ell_{1}}^{i}-\mu_{i})^{\alpha_{1}}\cdots (J_{\ell_{r}}^{i}-\mu_{i})^{\alpha_{r}}\right] \Bigg| 
\leq C_{1} n^{m+1}
\end{equation}
for all $n\geq 1$. 

Note that $w\geq 2$ so that $|I|\geq 1$ and Proposition \ref{prop:chernoff} applies. Thus we will be done upon showing that for 
each subset $I_{0}\subseteq [1,r)$ such that $2|I_{0}|\geq |I|$, there exists a constant $C_{2}=C_{2}(i,p)>0$ such that 
\begin{equation}
\label{eq:subclaim}
\Bigg| \sum_{0\leq \ell_{1}< \cdots < \ell_{r}\leq n}\exp\bigg(-D \sum_{j\in I_{0}} (\ell_{j+1}-\ell_{j}) \bigg) \Bigg| \leq  C_{2} n^{m+1}
\end{equation}
for all $n\geq 1$. (There are $O(1)$ subsets $I_{0}$ in the sum from Proposition \ref{prop:chernoff}.)

To verify Equation \eqref{eq:subclaim}, first observe that if $\ell_{j+1}-\ell_{j}> n^{1/2m}$ for some $j\in I_{0}$, then the corresponding summand is of 
order $O(\exp(-Dn^{1/2m}))$. As there are $O(n^{2m})$ choices, the contribution from such terms is of order $O(1)$. Accordingly, it suffices to show that 
there are $O(n^{m+1})$ sequences $0\leq \ell_{1}<\cdots<\ell_{r}\leq n$ not verifying this condition. To this end, let $L$ be the set of maps 
$\ell:[r]\rightarrow [n]\cup\{0\}$ such that $\ell(j+1)-\ell(j)\leq n^{1/2m}$ for all $j\in I_{0}$, and let $G=([r],E)$ be the graph with vertex set $[r]$ and 
edge set $E=\big\{ \{j,j+1\} \st j\in I_{0} \big\}$. 
Then $G$ contains at most $r-|E|=r-|I_{0}|$ connected components, say $P_{1},\ldots,P_{N}$ where $P_{i}$ is a path consisting of vertices 
$\{j_{i},j_{i}+1,\ldots,j_{i}+s_{i}-1\}$, $s_{i}=|P_{i}|$. Now for any $\ell\in L$ and $1\leq i \leq N$, there at most $n^{1+s_{i}/2m}$ possible choices 
for $\ell(P_{i})$---$n$ for $\ell(j_{i})$ and $n^{1/2m}$ for each of the $s_{i}-1$ successive vertices. Since $N\leq r-|I_{0}|$ and 
$\sum_{i=1}^{N}s_{i}=r\leq 2m$, this gives 
\[
|L|\leq \prod_{i=1}^{N} n^{1+s_{i}/2m} =  n^{N+r/2m} \leq n^{r-|I_{0}|+1}.
\]
The assertion then follows since 
\[
2r-2|I_{0}| + 2 \leq 2r - w +2 \leq 2u+w+2 \leq 2m+2,
\]
where we have used the fact that $2|I_{0}|\geq |I|$ and $2u+w\leq \sum_{k=1}^{r}\alpha_{i} = 2m$. 
\end{proof}

\section{Top soliton lengths in the subcritical regime}
\label{section:Subcritical}

In this section, we prove Theorem \ref{theorem:columns} (i). Fix $p\in (0,1/2)$ and let $\left\{H_{k}\right\}_{k=0}^{\infty}$ denote the Harris walk associated 
with the random box-ball configuration $X^{p}$. The main insight is that the $j^{\text{th}}$ longest soliton length, $\lambda_{j}(n)$, is asymptotically 
equal to the  $j^{\text{th}}$ largest excursion height of $H_{k}$ over the interval $[0,n]$, which we denote by $h_{j}(n)$ (Lemma 
\ref{lemma:soliton_lengths_excursion_heights}). 
This allows us to obtain limit theorems for the $\lambda_{j}(n)$ in terms of the $h_{j}(n)$ (Lemma \ref{lemma:limit_excursion_heights}). 

Before getting into the details, we discuss the main issue in comparing soliton lengths with excursion heights. Clearly $\lambda_{j}(n)\geq h_{j}(n)$ due 
to the hill-flattening construction of the invariant Young diagram (Lemma \ref{lemma:invariance_Youngdiagram}). For $j=1$, we also have 
$\lambda_{1}(n)=h_{1}(n)$ since $\lambda_{1}(n)$ equals the maximum height of the Harris walk over $[0,n]$ by Lemma 
\ref{lemma:columnbyexcursionop}. However, this identity does not hold for $j\geq 2$. Indeed, Lemma \ref{lemma:columnbyexcursionop} shows that 
$\lambda_{2}(n)=\max \E(\Gamma(X^{n,p}))$, the maximum excursion height of the modified Motzkin path $\E(\Gamma(X^{n,p}))$. While all but the 
highest excursion of $\Gamma(X^{n,p})$ are preserved after applying the excursion operator $\E$, it might be the case that there is a large subexcursion within 
the highest excursion which dominates the contribution from the second highest excursion of $\Gamma(X^{n,p})$. In Subsection 
\ref{subsection:subsubexc}, we show that this is not the case asymptotically.

\subsection{Overview and main results}
\label{subsection:sub_main_results}

We begin by stating the main results of this section and using them to prove Theorem \ref{theorem:columns} (i). Our first step is to obtain limit theorems 
for the $h_{j}(n)$ (which will be defined more carefully in the following subsection).

\begin{lemma} 
\label{lemma:limit_excursion_heights}
Set $\theta=(1-p)/p$, $\sigma=(1-2p)/(1-p)$, and $\mu_{n}=\log_{\theta}\left((1-2p)\sigma n\right)$. 
Let $h_{j}(n)$ be the $j^{\text{th}}$ largest excursion height of the associate Harris walk over $[0,n]$. Then for any nondecreasing real sequence 
$\{x_{n}\}_{n\geq 1}$, 
\[
\liminf_{n\rightarrow \infty} \,\, \exp\big(\theta^{-x_{n}}\big) \left(\sum_{k=0}^{j-1} \frac{\theta^{-k(x_{n}+1)}}{k!}\right)^{-1} 
\PP\left\{ h_{j}(n) \leq x_{n}+\mu_{n} \right\} \geq 1,
\]
and
\[
\limsup_{n\rightarrow \infty} \,\, \exp\big(\theta^{-(x_{n}+1)}\big)\left( \sum_{k=0}^{j-1}\frac{\theta^{-kx_{n}}}{k!}\right)^{-1} 
\PP\left\{ h_{j}(n) \leq x_{n}+\mu_{n} \right\} \leq 1.
\]
\end{lemma}

Next, we show that the soliton lengths and excursion heights are essentially the same objects. 

\begin{lemma}
\label{lemma:soliton_lengths_excursion_heights}
Fix $p\in (0,1/2)$. Then for each $j\geq 1$,
\[
\lim_{n\rightarrow \infty} \PP\left\{ \lambda_{j}(n)\neq h_{j}(n) \right\} =0.
\]
\end{lemma}

It is then straightforward to derive the main result for soliton lengths in the subcritical regime. 

\begin{proof}[\textbf{Proof of Theorem} 
\ref{theorem:columns} \textup{(i)}]
Fix $j\geq 1$, $x\in\R$, and let $\mu_{n}=\log_{\theta}\left((1-2p)\sigma n\right)$. Since $\lambda_{j}(n)\geq h_{j}(n)$, we have 
\[
\PP\left\{ \lambda_{j}(n) \leq x +\mu_{n} \right\} \leq \PP\left\{ h_{j}(n) \leq x +\mu_{n} \right\}.
\]	
Hence Lemma \ref{lemma:limit_excursion_heights} shows 
\[
\limsup_{n\rightarrow \infty}\PP\left\{ \lambda_{j}(n) \leq x+\mu_{n} \right\} \leq \exp(-\theta^{-(x+1)})\sum_{k=0}^{j-1} \frac{\theta^{-kx}}{k!}.
\]
For the other inequality, we have  
\[
\PP\left\{ h_{j}(n) \leq x +\mu_{n} \right\} \le \PP\left\{ \lambda_{j}(n) \leq x +\mu_{n} \right\} + \PP\left\{\lambda_{j}(n)\ne h_{j}(n) \right\}, 
\]	
so Lemmas \ref{lemma:limit_excursion_heights} and \ref{lemma:soliton_lengths_excursion_heights} show that
\[
\liminf_{n\rightarrow \infty}\PP\left\{ \lambda_{j}(n) \leq x+\mu_{n} \right\} \ge \exp(-\theta^{-x})\sum_{k=0}^{j-1} \frac{\theta^{-k(x+1)}}{k!}.\qedhere
\]
\end{proof}

\subsection{Excursion heights}
\label{subsection:limit_order_statistics}

This subsection is devoted to proving Lemma \ref{lemma:limit_excursion_heights}. Roughly speaking, we proceed by showing that the Harris walk has 
$\Theta(n)$ excursions by time $n$. By relating the excursion heights to a gambler's ruin problem, we argue that their distribution has an exponential tail. 
Taking the maximum over the $\Theta(n)$ excursions shows that the law of $h_{1}(n)$ is approximated by a Gumbel distribution after scaling appropriately. 
The other order statistics are handled similarly.

To begin, set $\tau_{1}=0$ and for $k>1$, define $\tau_{k}=\inf\{j>\tau_{k-1} \st H_{j}=0\}$ to be the time of the $k^{\text{th}}$ 
visit to $0$. Thus $\tau_{k}$ is the beginning of the $k^{\text{th}}$ excursion above the $x$-axis, and $\tau_{k+1}$ is the end of 
the $k^{\text{th}}$ such excursion. (In this section, if the random walk stays at $0$, this counts as an excursion of height $0$.) Let 
\[
h_{k} = \sup \{H_{t} \st \tau_{k}< t \leq \tau_{k+1}\} 
= \sup \bigg\{\sum_{i=\tau_{k}+1}^{t}\xi_{i} \st \tau_{k}< t \leq \tau_{k+1} \bigg\} \vee 0
\]
be the maximum height of the $k^{\text{th}}$ excursion. The strong Markov property ensures that $h_{1},h_{2},\ldots$ are 
i.i.d. $\N_{0}$-valued random variables. To compute their distribution function, $F(x)=\PP\{h_{1}\leq x\}$, we observe that 
$\PP\{h_{1}=0\}=1-p$ and $\PP\{h_{1}\leq x\}=\PP\{1\leq h_{1}\leq x\}+\PP\{h_{1}=0\}$ for $x\geq 1$. 
In order for the event $\left\{1 \leq h_{1} \leq x\right\}$ to occur, the random walk must begin with an upstep and then 
visit zero before visiting $x+1$. The latter occurs with the `gambler's ruin' probability that a simple random walker, started 
at the origin and moving right with probability $p$, hits $-1$ before hitting $x$, which is given by 
$\big(\theta^{x}-1\big)/\big(\theta^{x}-\theta^{-1}\big)$ \cite[Ch.\ 5.7]{Durrett}. Putting all of this together shows that 
$F(x)=(1-p)+p\big(\theta^{x}-1\big)/\big(\theta^{x}-\theta^{-1}\big)$ for all $x\in\N_{0}$. 
After a bit of rearranging, we get
\begin{equation}
\label{eq:CDF_height_sub_excursion}
F(x)= \left( 1-\frac{1-2p}{\theta^{\left\lfloor x\right\rfloor +1}-1}\right)\one_{[0,\infty)}(x).
\end{equation}

Now let $h_{1:m},\ldots,h_{m:m}$ denote the (reversed) order statistics of $h_{1},\ldots,h_{m}$ so that 
$h_{1:m}\geq  \cdots \geq  h_{m:m}$ and $\{h_{1:m},\ldots,h_{m:m}\}=\{h_{1},\ldots,h_{m}\}$ as multisets. Then 
\begin{equation}
\label{eq:CDF_order_statistics}
F_{j:m}(x) := \PP\{h_{j:m}\leq x\} = \sum_{k=0}^{j-1} \binom{m}{k} F(x)^{m-k}(1-F(x))^{k}, \quad j=1,\ldots,m.
\end{equation}
In particular, the maximum $h_{1:m}$ has distribution function 
\begin{equation}
\label{eq:CDF_max_height}
F_{1:m}(x) = \left( 1-  \frac{1-2p}{\theta^{\lfloor x \rfloor+1}-1}\right)^{m} \one_{[0,\infty)}(x).
\end{equation}

Write $M_{n} = \sup\{k \st \tau_{k+1}\leq n\}$ for the number of excursions completed by time $n$ and let 
$r_{n}=\max\{\sum_{i=\tau_{M_{n}+1}}^{r}\xi_{i} \st \tau_{M_{n}+1}\leq r \leq n\}$ be the maximum height attained 
after the last complete excursion. The \emph{excursion heights} $h_{1}(n)\geq h_{2}(n)\geq\cdots\geq h_{M_{n}+1}(n)$ 
are the (reversed) order statistics for $h_{1},\ldots,h_{M_{n}},r_{n}$. We begin by showing that $M_{n}$ is sharply 
concentrated around its mean so that we can essentially treat it as a deterministic sequence. 

\begin{prop}
\label{prop:Mn_slln}
If $M_{n}$ is the number of excursions of $H$ completed by time $n$, then
\[
\frac{M_{n}}{n} \rightarrow \frac{1-2p}{1-p} \text{ a.s.} \quad \text{as $n\rightarrow \infty$}.
\]
\end{prop}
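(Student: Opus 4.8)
The plan is to recognize $M_{n}$ as an ordinary (non-delayed) renewal counting process and invoke the strong law for renewal processes. Write $\ell_{k}=\tau_{k+1}-\tau_{k}$ for the length of the $k^{\text{th}}$ excursion of $H$, where $\tau_{1}=0$. Each $\tau_{k}$ is a stopping time for the Harris walk with $H_{\tau_{k}}=0$, so the strong Markov property shows that $\{\ell_{k}\}_{k\geq 1}$ are i.i.d., each distributed as the first return time to $0$ of the birth-and-death chain started from $0$. Since $\tau_{k+1}=\ell_{1}+\cdots+\ell_{k}$, the quantity $M_{n}=\sup\{k\st\tau_{k+1}\leq n\}$ is exactly the renewal counting function associated with $\{\ell_{k}\}$, so the result will follow once we compute $\EE[\ell_{1}]$ and check it is finite.

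Next I would compute $\EE[\ell_{1}]$. There are two routes. The quickest is to recall that for a positive recurrent Markov chain the expected return time to a state is the reciprocal of its stationary probability; since $p<1/2$ the Harris chain is positive recurrent with $\pi(x)=\big[(1-2p)/(1-p)\big]\theta^{-x}$, so $\EE[\ell_{1}]=1/\pi(0)=(1-p)/(1-2p)<\infty$. Alternatively one can argue directly by one-step analysis: with probability $1-p$ the first step is a censored downstroke and $\ell_{1}=1$; with probability $p$ the walk steps up to height $1$ and must then hit $0$, and the expected hitting time $m$ of $0$ from $1$ for the simple random walk with up-probability $p$ satisfies $m=1+2pm$, hence $m=1/(1-2p)$. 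Combining, $\EE[\ell_{1}]=(1-p)+p\big(1+\tfrac{1}{1-2p}\big)=(1-p)/(1-2p)$, confirming finiteness (this is precisely the place the subcriticality $p<1/2$ is used).

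Finally, the strong law of large numbers for renewal processes (e.g.\ \cite[Ch.\ 2]{Durrett}) gives
\[
\frac{M_{n}}{n}\longrightarrow \frac{1}{\EE[\ell_{1}]}=\frac{1-2p}{1-p}\quad\text{a.s.\ as }n\to\infty,
\]
which is the assertion.

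I do not expect a genuine obstacle here; the proof is essentially a renewal-theory bookkeeping exercise. The only point requiring care is verifying $\EE[\ell_{1}]<\infty$ and identifying its exact value, together with the observation that $M_{n}$ (completed excursions, i.e.\ $\tau_{k+1}\leq n$) is matched exactly by the standard renewal count of the partial sums $\tau_{k+1}=\ell_{1}+\cdots+\ell_{k}$.
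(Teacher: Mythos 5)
Your proof is correct. The paper's own argument is shorter: it writes $M_{n}=\sum_{k=1}^{n}\one\{H_{k}=0\}$, the occupation count of state $0$, and invokes the Markov chain ergodic theorem for the positive recurrent Harris chain to get $M_{n}/n\to\pi(0)=(1-2p)/(1-p)$ a.s. Your route instead treats $M_{n}$ as the renewal counting function of the i.i.d.\ return times $\ell_{k}=\tau_{k+1}-\tau_{k}$ and applies the renewal SLLN after computing $\EE[\ell_{1}]=(1-p)/(1-2p)$. The two arguments are of course two faces of the same fact (the a.s.\ occupation frequency of a state equals the reciprocal of its mean return time), but yours has the mild advantage of being self-contained: the one-step computation $m=1+2pm$ for the hitting time of $0$ from $1$ verifies $\EE[\ell_{1}]<\infty$ directly and makes explicit where $p<1/2$ enters, without needing to quote the stationary distribution. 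All the supporting details check out: the $\ell_{k}$ are indeed i.i.d.\ by the strong Markov property at the stopping times $\tau_{k}$ (note $\tau_{1}=0$ with $H_{0}=0$, so $\ell_{1}$ has the same law as the rest and the renewal process is non-delayed, as you say), and $M_{n}=\sup\{k\st \ell_{1}+\cdots+\ell_{k}\leq n\}$ is exactly the standard renewal count.
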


\begin{proof}
We may write $M_{n}=\sum_{k=1}^{n}\one\{H_{k}=0\}$, the number of visits to $0$ in $[1,n]$. Since the Harris walk 
is ergodic with stationary distribution $\pi(x)=[(1-2p)/(1-p)]\theta^{-x}$ for $p<1/2$, we can apply the Markov chain 
ergodic theorem to obtain 
\[
\frac{M_{n}}{n}\rightarrow \pi(0)=\frac{1-2p}{1-p} \text{ a.s.} \qedhere
\]
\end{proof}

The next ingredient in our argument is a simple stochastic monotonicity result.
 
\begin{prop}
\label{prop:concentration}
Set $\sigma=(1-2p)/(1-p)$, $p\in (0,1/2)$. For any real sequence $\{x_{n}\}_{n\geq 1}$ and any positive integer $j$, 
we have that for all $\eps>0$,
\[
\limsup_{n\rightarrow \infty} \PP\left\{h_{j}(n)\leq x_{n}\right\} 
\leq \limsup_{n\rightarrow \infty}  \PP\left\{h_{j:\lfloor \left(\sigma  -\eps\right)n\rfloor} \leq x_{n}\right\}
\]
and 
\[
\liminf_{n\rightarrow \infty} \PP\{h_{j}(n)\leq x_{n}\} 
\geq \liminf_{n\rightarrow \infty} \PP\left\{h_{ j:\lceil (\sigma  +\eps) n \rceil} \leq x_{n}\right\}.
\]
\end{prop}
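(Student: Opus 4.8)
My plan is to sandwich the random-index order statistic $h_j(n)$ between two order statistics of a \emph{fixed-size} i.i.d.\ sample, using only the law of large numbers $M_n/n\to\sigma$ a.s.\ from Proposition~\ref{prop:Mn_slln} together with an elementary monotonicity property of order statistics. I would set everything up on the single probability space carrying the infinite i.i.d.\ sequence $h_1,h_2,\dots$ of excursion heights of the Harris walk, so that for each deterministic $m$ the variable $h_{j:m}$ is literally the $j$-th largest of $h_1,\dots,h_m$, while $h_j(n)$ is the $j$-th largest of the multiset $\{h_1,\dots,h_{M_n},r_n\}$. Then every comparison below is pathwise rather than merely distributional, and the only combinatorial input needed is: for finite multisets of reals $A\subseteq B$ with $|A|\geq j$, the $j$-th largest element of $A$ is at most the $j$-th largest of $B$, and replacing one element of a multiset by a larger one does not decrease any order statistic.

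For the first (upper) inequality I would fix $\eps>0$, set $A_n=\{M_n\geq\lfloor(\sigma-\eps)n\rfloor\}$, and note that on $A_n$ the multiset $\{h_1,\dots,h_{\lfloor(\sigma-\eps)n\rfloor}\}$ is contained in $\{h_1,\dots,h_{M_n},r_n\}$, so $h_j(n)\geq h_{j:\lfloor(\sigma-\eps)n\rfloor}$ there and hence $\{h_j(n)\leq x_n\}\cap A_n\subseteq\{h_{j:\lfloor(\sigma-\eps)n\rfloor}\leq x_n\}$. Since $\lfloor(\sigma-\eps)n\rfloor\leq(\sigma-\eps)n$, Proposition~\ref{prop:Mn_slln} gives $\PP(A_n^c)\leq\PP\{M_n/n<\sigma-\eps\}\to 0$, so $\PP\{h_j(n)\leq x_n\}\leq\PP\{h_{j:\lfloor(\sigma-\eps)n\rfloor}\leq x_n\}+\PP(A_n^c)$; taking $\limsup_n$ discards the vanishing term.

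For the second (lower) inequality, the extra point is to control the last, incomplete excursion value $r_n$. Since $\tau_{M_n+1}\leq n<\tau_{M_n+2}$, the partial excursion on $[\tau_{M_n+1},n]$ is an initial segment of the full $(M_n+1)$-st excursion, so $r_n\leq h_{M_n+1}$; replacing $r_n$ by $h_{M_n+1}$ in the multiset then gives $h_j(n)\leq h_{j:M_n+1}$. Taking $B_n=\{M_n+1\leq\lceil(\sigma+\eps)n\rceil\}$, the inclusion $\{h_1,\dots,h_{M_n+1}\}\subseteq\{h_1,\dots,h_{\lceil(\sigma+\eps)n\rceil}\}$ holds on $B_n$ and yields $h_j(n)\leq h_{j:\lceil(\sigma+\eps)n\rceil}$, hence $\{h_{j:\lceil(\sigma+\eps)n\rceil}\leq x_n\}\cap B_n\subseteq\{h_j(n)\leq x_n\}$. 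As $\PP(B_n^c)\leq\PP\{M_n/n\geq\sigma+\eps\}\to 0$ by Proposition~\ref{prop:Mn_slln}, I get $\PP\{h_j(n)\leq x_n\}\geq\PP\{h_{j:\lceil(\sigma+\eps)n\rceil}\leq x_n\}-\PP(B_n^c)$, and $\liminf_n$ finishes it.

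I do not anticipate a real obstacle: this is a routine squeeze. The two things that take a little care are keeping the comparison pathwise on one probability space (so the monotonicity of order statistics can be applied directly, rather than routing through stochastic dominance), and the harmless off-by-one coming from $r_n$, which is absorbed by the bound $r_n\leq h_{M_n+1}$ and then dwarfed by the $\pm\eps$ slack in $M_n\approx\sigma n$ as well as by the floors and ceilings. The fact that the three order statistics are only defined once their index sets have at least $j$ elements is immaterial, since only the $n\to\infty$ limits matter.
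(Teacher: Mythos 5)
Your proposal is correct and follows essentially the same route as the paper: both arguments sandwich $h_j(n)$ between fixed-index order statistics using Proposition \ref{prop:Mn_slln}, the bound $r_n\leq h_{M_n+1}$, and monotonicity of order statistics under multiset inclusion. The only difference is cosmetic — the paper phrases the sandwich as holding almost surely for all $n$ beyond a random threshold (via the times $N^{\pm}(n,\eps)$), whereas you condition on high-probability events $A_n,B_n$ and absorb their complements with a union bound.
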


\begin{proof}
Define 
\[
N^{-}(n,\eps) = \sup\left\{t \st M_{t}\leq (\sigma-\eps)n \right\}
\]
and 
\[
N^{+}(n,\eps) = \inf \left\{t \st M_{t}\geq (\sigma+\eps)n \right\}.
\]	
It follows from Proposition \ref{prop:Mn_slln} that there is an a.s. finite $N$ such that 
\[
\{h_{1},\ldots,h_{M_{N^{-}(n,\eps)}}\}\subseteq\{h_{1},\ldots,h_{M_{n}+1}\}\subseteq\{h_{1},\ldots,h_{M_{N^{+}(n,\eps)}}\}
\]
with probability one for all $n\geq N$. Because $r_{n}\leq h_{M_{n}+1}$ and the probability that $h_{M_{n}+1}$ is among the $j$ largest of 
$h_{1},\ldots,h_{M_{n}+1}$ goes to zero as $n\rightarrow\infty$, we see that for any $\eps>0$, 
\[
\PP\big\{h_{j:M_{N^{-}(n,\eps)}}\leq h_{j}(n)\leq h_{j:M_{N^{+}(n,\eps)}}\big\}>1-\eps
\] 
when $n$ is sufficiently large, hence
\[
\limsup_{n\rightarrow \infty} \PP\{h_{j}(n)\leq x_{n}\} 
\leq \limsup_{n\rightarrow \infty} \PP\big\{h_{j:M_{N^{-}(n,\eps)}}\leq x_{n}\big\}
\]
and 
\[
\liminf_{n\rightarrow \infty} \PP\{h_{j}(n)\leq x_{n}\} 
\geq \liminf_{n\rightarrow \infty} \PP\big\{h_{j:M_{N^{+}(n,\eps)}}\leq x_{n}\big\}.
\]
The desired assertion follows by noting that $M_{N^{-}(n,\eps)}=\lfloor (\sigma-\eps)n \rfloor$ and $M_{N^{+}(n,\eps)}=\lceil (\sigma+\eps)n \rceil$ 
a.s. since $0$ is a recurrent state of $\{H_{k}\}$.
\end{proof}

We are now in a position to prove the main result of this subsection.

\begin{proof}[\textbf{Proof of Lemma} 
\ref{lemma:limit_excursion_heights}] 

First, we claim that for any sequence $\{b_{n}\}_{n\geq 1}$ with $\lim_{n\rightarrow\infty}b_{n}/n=c>0$ and any nondecreasing 
sequence $\{y_{n}\}_{n\geq 1}$, we have 
\begin{equation}
\label{cdf limit}
\lim_{n\rightarrow\infty} \exp\left((c/\sigma)\theta^{-y_{n}}\right) 
\left(1- \frac{\theta^{-y_{n}}}{\sigma n-(1-2p)^{-1}\theta^{-y_{n}}}\right)^{b_{n}} =1. 
\end{equation}
Indeed,
\begin{multline*}
\log \left[ \left(1- \frac{\theta^{-y_{n}}}{\sigma n-(1-2p)^{-1}\theta^{-y_{n}}}\right)^{b_{n}}
\exp\left((c/\sigma)\theta^{-y_{n}}\right) \right]\\
=\theta^{-y_{n}}(b_{n}/\theta^{-y_{n}})\log  \left(1- \frac{\theta^{-y_{n}}}{\sigma n-(1-2p)^{-1}\theta^{-y_{n}}}\right)
+(c/\sigma)\theta^{-y_{n}}\\
= \theta^{-y_{n}} \left[ \frac{c}{\sigma}
+\frac{\log \left(1-\frac{\theta^{-y_{n}}}{\sigma n - (1-2p)^{-1}\theta^{-y_{n}}} \right)}{\theta^{-y_{n}}/b_{n}} \right].
\end{multline*}
Since $\theta>1$ and $\{y_{n}\}_{n\geq 1}$ is nondecreasing, $\theta^{-y_{n}}$ is bounded. The claim follows since a Taylor 
expansion of the log term shows that
\[
\lim_{n\rightarrow\infty}\frac{\log \left(1-\frac{\theta^{-y_{n}}}{\sigma n - (1-2p)^{-1}\theta^{-y_{n}}} \right)}{\theta^{-y_{n}}/b_{n}}
= -\frac{c}{\sigma}.
\]

Now fix $\eps>0$ and a nondecreasing sequence $\{x_{n}\}_{n\geq 1}$. Recall that for any deterministic sequence of integers $\{b_{n}\}_{n\geq1}$, 
\begin{align}
\PP\left\{ h_{j:b_{n}}\leq x_{n}+\mu_{n}\right\} 
& = \sum_{k=0}^{j-1} \binom{b_{n}}{k}\left(1-\frac{1-2p}{\theta^{\lfloor x_{n}+\mu_{n} \rfloor+1}-1}\right)^{b_{n}-k}
\left(\frac{1-2p}{\theta^{\lfloor x_{n}+\mu_{n} \rfloor+1}-1}\right)^{k} \\
& = \left(1-\frac{1-2p}{\theta^{\lfloor x_{n}+\mu_{n} \rfloor+1}-1}\right)^{b_{n}} 
\, \sum_{k=0}^{j-1} b_{n}^{-k} \binom{b_{n}}{k} \left(1-\frac{1-2p}{\theta^{\lfloor x_{n}+\mu_{n} \rfloor+1}-1}\right)^{-k}
\left(\frac{(1-2p)b_{n}}{\theta^{\lfloor x_{n}+\mu_{n} \rfloor+1}-1}\right)^{k}
\end{align}
when $x_{n}+\mu_{n}\geq 0$. (Since $\{x_{n}\}_{n\geq 1}$ is nondecreasing and $\mu_{n}=\log_{\theta}\left((1-2p)\sigma n\right)\nearrow\infty$, 
this restriction is satisfied for all large $n$.)

Writing $\nu_{n}=(x_{n}+\mu_{n})-\lfloor x_{n} + \mu_{n} \rfloor$, we have 
\[
\frac{1-2p}{\theta^{\lfloor x_{n}+\mu_{n}\rfloor+1}-1}=\frac{1-2p}{\theta^{x_{n}+\mu_{n}}\theta^{1-\nu_{n}}-1}
=\frac{\theta^{-x_{n}}}{\sigma n\theta^{1-\nu_{n}}-(1-2p)^{-1}\theta^{-x_{n}}},
\]
so, since $\theta>1$ and $1-\nu_{n}\in (0,1]$, we see that 
\begin{equation}
\label{subcritical eq}
\frac{\theta^{-(x_{n}+1)}}{\sigma n-(1-2p)^{-1}\theta^{-(x_{n}+1)}} \leq \frac{1-2p}{\theta^{\lfloor x_{n}+\mu_{n}\rfloor+1}-1}
\leq \frac{\theta^{-x_{n}}}{\sigma n-(1-2p)^{-1}\theta^{-x_{n}}}.
\end{equation}
	
Set $b_{n}=\lfloor (\sigma-\eps)n \rfloor$ and note that $\lim_{n\rightarrow \infty} b_{n}^{-k} \binom{b_{n}}{k} = \frac{1}{k!}$.
Then the above estimates and show that for all sufficiently large $n$,
\begin{align}
\PP\left\{ h_{j:b_{n}}\leq x_{n}+\mu_{n}\right\} 
 &\leq  \left(1- \frac{\theta^{-(x_{n}+1)}}{\sigma n-(1-2p)^{-1}\theta^{-(x_{n}+1)}}\right)^{b_{n}}\,\sum_{k=0}^{j-1} \frac{1+\eps}{k!}
 \left( \frac{(\sigma+\eps)n\theta^{-x_{n}}}{\sigma n-(1-2p)^{-1}\theta^{-x_{n}}} \right)^{k}.
\end{align} 
Thus Equation \eqref{cdf limit} with $y_{n}=x_{n}+1$ and $b_{n}=\lfloor (\sigma-\eps)n \rfloor$ gives
\begin{align}
\limsup_{n\rightarrow \infty} \, \exp\left(\left(1-\frac{\eps}{\sigma}\right)\theta^{-(x_{n}+1)}\right) \left(\sum_{k=0}^{j-1} \frac{\theta^{-kx_{n}}}{k!} \right)^{-1}
\PP\left\{h_{j:b_{n}}\leq x_{n}+\mu_{n}\right\} \leq (1+\eps)\left(1+\frac{\eps}{\sigma} \right)^{k}.
\end{align}  
By taking $y_{n}=x_{n}$ and $b^{\prime}_{n}=\lceil (\sigma + \eps )n\rceil$, a similar argument shows that 
\begin{align}
\liminf_{n\rightarrow \infty} \, \exp\left(\left(1+\frac{\eps}{\sigma}\right)\theta^{-x_{n}}\right) \left(\sum_{k=0}^{j-1} \frac{\theta^{-k(x+1)}}{k!} \right)^{-1}
\PP\left\{h_{1}(n)\leq x_{n}+\mu_{n}\right\} \ge (1-\eps)\left(1-\frac{\eps}{\sigma} \right)^{k}.
\end{align}  
Letting $\eps\searrow 0$ and applying Proposition~\ref{prop:Mn_slln} completes the proof.
\end{proof}

\begin{remark}
\textup{Because we are taking the maximum of a random number of excursions, the sequence $\{h_{j}(n)-\mu_{n}\}_{n\geq 1}$ does not have a weak 
limit (and thus neither do the normalized subcritical soliton lengths). To see this, we first recall that 
\[
\PP\{h_{1:b_{n}}\leq x + \mu_{n}\}=\left(1-\frac{1-2p}{\theta^{\lfloor x+\mu_{n}\rfloor +1}-1}\right)\one\{x+\mu_{n}\geq 0\}
\]
for any real sequence $\{b_{n}\}_{n\geq 1}$ and any $x\in\R$. Now fix $x>\mu_{1}$, write $\nu_{n}=(x+\mu_{n})-\lfloor x+\mu_{n}\rfloor$, and 
choose subsequences $\{\nu_{n_{k}}\}_{k\geq 1}$ and $\{\nu_{n_{\ell}}\}_{\ell\geq 1}$ such that $\nu_{n_{k}}\leq\frac{1}{3}$ and 
$\nu_{n_{\ell}}\geq\frac{2}{3}$ for all $k,\ell\in\N$. This is possible since $\mu_{n}=\log_{\theta}\big((1-2p)\sigma\big)+\log_{\theta}(n)$ and the 
fractional part of $\log_{\theta}(n)$ is dense in $[0,1]$.}

\textup{Since
\[
\frac{1-2p}{\theta^{\lfloor x+\mu_{n}\rfloor +1}-1}=\frac{\theta^{-x}}{\sigma n\theta^{1-\nu_{n}}-(1-2p)^{-1}\theta^{-x}},
\]
$\theta>1$, $1-\nu_{\ell}\in [0,1/3]$, and $1-\nu_{k}\in [2/3,1]$, we have the following analogues of Equation \eqref{subcritical eq}:
\[
\frac{\theta^{-(x+1/3)}}{\sigma n_{\ell} - (1-2p)^{-1}\theta^{-(x+1/3)}}
\leq\frac{1-2p}{\theta^{\lfloor x+\mu_{n_{\ell}}\rfloor+1}-1}\leq\frac{\theta^{-x}}{\sigma n_{\ell}-(1-2p)^{-1}\theta^{-x}}
\]
and
\[
\frac{\theta^{-(x+1)}}{\sigma n_{k} - (1-2p)^{-1}\theta^{-(x+1)}}
 \leq\frac{1-2p}{\theta^{\lfloor x+\mu_{n_{k}}\rfloor+1}-1} \leq\frac{\theta^{-(x+2/3)}}{\sigma n_{k} - (1-2p)^{-1}\theta^{-(x+2/3)}}.
\]
}

\textup{Repeating the last part of the proof of Lemma \ref{lemma:limit_excursion_heights} (and restricting attention to $h_{1}(n)=\lambda_{1}(n)$ 
to simplify notation) shows that
\[
e^{x}\leq\liminf_{\ell\rightarrow\infty}\,\PP\big\{\lambda_{1}(n_{\ell})\leq x+\mu_{n_{\ell}}\big\}
\leq\limsup_{\ell\rightarrow\infty}\,\PP\big\{\lambda_{1}(n_{\ell})\leq x+\mu_{n_{\ell}}\big\}\leq e^{x+1/3}
\]
and
\[
e^{x+2/3}\leq\liminf_{k\rightarrow\infty}\,\PP\big\{\lambda_{1}(n_{k})\leq x+\mu_{n_{k}}\big\}
\leq\limsup_{k\rightarrow\infty}\,\PP\big\{\lambda_{1}(n_{k})\leq x+\mu_{n_{k}}\big\}\leq e^{x+1}.
\]
In particular, 
\begin{equation}
\label{subcritical subseq ineq}
\limsup_{\ell\rightarrow\infty}\,\PP\big\{\lambda_{1}(n_{\ell})\leq x+\mu_{n_{\ell}}\big\}
<\liminf_{k\rightarrow\infty}\,\PP\big\{\lambda_{1}(n_{k})\leq x+\mu_{n_{k}}\big\}.
\end{equation}
Since the sequence $\big\{\lambda_{1}(n)-\mu_{n}\big\}$ is tight by Lemma \ref{lemma:limit_excursion_heights}, both 
$\big\{\lambda_{1}(n_{k})-\mu_{n_{k}}\big\}$ and $\big\{\lambda_{1}(n_{\ell})-\mu_{n_{\ell}}\big\}$ have subsequential weak limits. As Inequality 
\eqref{subcritical subseq ineq} implies that the limiting distribution functions disagree at $x$, it follows that $\big\{\lambda_{1}(n)-\mu_{n}\big\}$ does 
not converge weakly.}
\end{remark}

\subsection{Subexcursions within an excursion}
\label{subsection:subsubexc}

Given an excursion $\gamma$ of $H$ with length $\varsigma$ and rightmost global maximum at $(m^{\ast},h)$, define 
$a_{\ell}=\max\{t\leq m^{\ast}\st \gamma(t)=\ell\}$ and $b_{\ell}=\min\{t\geq m^{\ast}\st \gamma(t)=\ell\}$ for $\ell=0,\ldots,h$. 
Write $\gamma_{a,\ell}=\gamma|_{[a_{\ell-1},a_{\ell}]}-\ell$ and $\gamma_{b,\ell}=\gamma|_{[b_{\ell},b_{\ell-1}]}-\ell$. 
These paths correspond to the portions of $\gamma$ which, moving away from $m^{\ast}$, begin at the point where $\gamma$ first 
descends to height $\ell$ and end where $\gamma$ first descends to height $\ell-1$, except that they are shifted down by $\ell$; 
see Figure \ref{pic:excursion intervals}. 

Set $\widetilde{\gamma}_{a,\ell}=\gamma_{a,\ell}\vee0$, $\widetilde{\gamma}_{b,\ell}=\gamma_{b,\ell}\vee0$ (which has the effect of changing 
the downstroke furthest from $m^{\ast}$ to an $h$-stroke) and define 
\[
\vartheta(k)=\begin{cases}
\max\widetilde{\gamma}_{a,k}, & k\leq h\\
\max\widetilde{\gamma}_{b,k-h}, & h<k\leq2h
\end{cases}.
\]
Then $\E(\gamma)$ is the concatenation of 
$\widetilde{\gamma}_{a,1},\ldots,\widetilde{\gamma}_{a,h},\widetilde{\gamma}_{b,h},\ldots,\widetilde{\gamma}_{b,1}$, so 
$\max\E(\gamma)=\max_{1\leq k\leq2h}\vartheta(k)$.
	
\begin{figure*}[h]
	\centering
	\includegraphics[width=1 \linewidth]{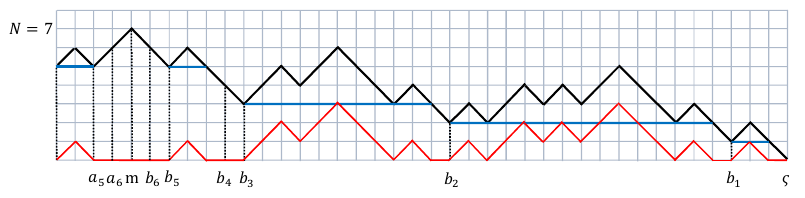}
	\vspace{-0.5cm}
	\caption{The black path is $\gamma$ and the red path is $\E(\gamma)$. $a_{\ell}$ and $b_{\ell}$ are the first locations to the left and right of $\texttt{m}$ 
		    that $\gamma$ is at height $\ell$ and the blue lines indicate paths between successive values. In these regions, $\gamma$ and $\E(\gamma)$ 
		    differ only by vertical translation.
		     }
	\label{pic:excursion intervals}
\end{figure*}
	
Now let $m_{\ast}$ denote the leftmost global maximum of $\gamma$ and set $c_{\ell}=\min\{t\geq m_{\ast}\st \gamma(t)=\ell\}$,  
$\gamma_{c,\ell}=\gamma|_{[c_{\ell},c_{\ell-1}]}-\ell$, $\widetilde{\gamma}_{c,\ell}=\gamma_{c,\ell}\vee0$. Define 
$\omega(k)=\max\widetilde{\gamma}_{c,k}=\max\gamma_{c,k}$ for $k=1,\ldots,h$. We first observe that 
$\max\{\omega(1),\ldots,\omega(h)\}\geq\max\{\vartheta(h+1),\ldots,\vartheta(2h)\}$. 
To see that this is so, let $j=\min_{m_{\ast}\leq t\leq m^{\ast}}\gamma(t)$. Then $b_{k}=c_{k}$ for all $k<j$, hence
$\omega(k)=\max_{c_{k}\leq t\leq c_{k-1}}\gamma(t)-k\geq \max_{b_{k}\leq t\leq b_{k-1}}\gamma(t)-k =\vartheta(h+k)$ for all $k\leq j$ 
because $c_{\ell}\leq b_{\ell}$ for all $\ell$. On the other hand, since $c_{j}\leq m^{\ast}<c_{j-1}$, $\omega(j)=h-j> h-k\geq\vartheta(h+k)$ for all 
$k>j$. It follows that $\max\E(\gamma)=\max\{\vartheta(1),\ldots,\vartheta(2h)\}\leq \max\{\vartheta(1),\ldots,\vartheta(h),\omega(1),\ldots,\omega(h)\}$.

Next we observe that $\gamma$ is symmetric about $\varsigma/2$ in distribution. This is because, conditional on the excursion length, the law of $\gamma$ 
depends only on the number of up and down steps. Accordingly, $\E\gamma|_{[0,m^{\ast}]}$ and $\E\gamma|_{[m_{\ast},\varsigma]}$ have the same 
distribution, so $\max\{\vartheta(1),\ldots,\vartheta(h)\}=_{d}\max\{\omega(1),\ldots,\omega(h)\}$, and thus 
\begin{align*}
\PP\big\{ \max\E(\gamma)>x\big\} & \leq \PP\big\{\max\{\vartheta(1),\ldots,\vartheta(h),\omega(1),\ldots,\omega(h)\} > x \big\}\\
 & \leq \PP\big\{\max\{\vartheta(1),\ldots,\vartheta(h)\} > x \big\}+\PP\big\{\max\{\omega(1),\ldots,\omega(h)\} > x \big\}\\
 & = 2\,\PP\big\{\max\{\omega(1),\ldots,\omega(h)\} > x \big\}.
\end{align*}

To treat the latter probability, note that given $\{h=r\}$, $m_{\ast},c_{r-1},\ldots,c_{0}$ are stopping times with respect to the natural filtration, so 
$\omega(1),\ldots,\omega(r)$ are independent by the strong Markov property. 
Also, each $\omega(k)$ is stochastically dominated by the random variable $Y$ which gives the maximum value taken by a simple random walker 
started at $0$ and moving right with probability $p$ before hitting $-1$ (as the path $\gamma_{c,k}$ is constrained to be at height at most $h-k$ 
whereas the random walker's path has no such restriction). We conclude that on the event $\{h\leq r\}$,
\begin{equation}
\label{subexcursion bound}
\PP\big\{ \max\E(\gamma)>x\big\}  \leq 2\,\PP\big\{\max\{\omega(1),\ldots,\omega(h)\} > x \big\} \leq 2\left(1-G(x)^{r}\right)
\end{equation}
where $G(x)=\PP\{ Y\leq x\} $ is the gambler's ruin probability \cite[Ch.\ 5.7]{Durrett}
\[
G(x)=\frac{\theta^{\left\lfloor x\right\rfloor +1}-1}{\theta^{\left\lfloor x\right\rfloor +1}-\theta^{-1}}\mathbf{1}_{[0,\infty)}(x).
\]

\begin{proof}[\textbf{Proof of Lemma} 
\ref{lemma:soliton_lengths_excursion_heights}]	

Fix $j\geq1$ and let $\eps>0$ be given. Lemma \ref{lemma:limit_excursion_heights} implies that there exist $\delta>0$, $N\in\N$ 
such that for each $n\geq N$, the event 
\[
E_{j,\delta,n}=\big\{ h_{1}(n)\leq n^{\delta},\,h_{j}(n)\geq 2\delta\log_{\theta}(n)\big\} 
\]
has probability at least $1-\eps$. Write $H^{(k,n)}$ for the $k^{\text{th}}$ highest excursion of $H|_{[0,n]}$, so that $h_{k}(n)=\max H^{(k,n)}$. 
As each application of the excursion operator affects only one excursion, $\lambda_{1}(n),\ldots,\lambda_{j}(n)$ are the $j$ largest values 
among $\E^{i-1}\left(H^{(k,n)}\right)$ as $i$ and $k$ range over $\{1,\ldots,j\}$. On $E_{j,\delta,n}$, these coincide 
with $h_{1}(n),\ldots,h_{j}(n)$ when $\E\big(H^{(k,n)}\big)\leq 2\delta\log_{\theta}(n)$ for $k=1,\ldots,j$. Since 
\[
G(x)=\frac{\theta^{\left\lfloor x\right\rfloor +1}-1}{\theta^{\left\lfloor x\right\rfloor +1}-\theta^{-1}}\geq1-\frac{\theta}{\theta^{x}-1}
\]
for $x>0$, Equation \eqref{subexcursion bound} implies 
\begin{align*}
\PP\big\{ \E\big(H^{(k,n)}\big)>2\delta\log_{\theta}(n);\,E_{j,\delta,n}\big\}  & \leq 2\left[1-G\big(2\delta\log_{\theta}(n)\big)^{n^{\delta}}\right]
 \leq 2\Bigg[1-\Bigg(1-\frac{\theta}{n^{2\delta}-1}\Bigg)^{n^{\delta}}\Bigg]\\
 & \leq 2\Bigg[1-\Bigg(1-n^{\delta}\frac{\theta}{n^{2\delta}-1}\Bigg)\Bigg] \leq \frac{4\theta}{n^{\delta}}.
\end{align*}
Consequently,
\[
\PP\big\{ \lambda_{k}(n)=h_{k}(n)\text{ for }k=1,\ldots,j\big\} \leq \eps+\frac{4j\theta}{n^{\delta}},
\]
and the claim follows since $\eps$ is arbitrary.
\end{proof}

\section{Top soliton lengths at criticality}
\label{section:Critical}

In this section we observe that when $p=1/2$, the (suitably scaled) Harris walk converges weakly to a reflected Brownian motion at the 
process level. In fact, this weak convergence can be strengthened to ``polynomial convergence'' by appealing to a result from 
Drmota \cite{drmota2004stochastic}. This enables us to deduce scaling limits for the top soliton lengths. 

Recall that $C([0,1])$ denotes the space of continuous functions $f:[0,1]\rightarrow \R$ equipped with the supremum norm. We say 
a continuous functional $F:C([0,1])\rightarrow \R$ is of \textit{polynomial growth} if there exists $r\geq 1$ such that 
$|F(\gamma)| \leq \lVert \gamma \rVert_{\infty}^{r}$ for all $\gamma\in C([0,1])$.

\begin{theorem}[Theorem 9 of \cite{drmota2004stochastic}]
\label{thm:drmota}
Suppose that a sequence of stochastic processes $x_{n}(t)$ defined on $C([0,1])$ converges weakly to $x(t)$. Furthermore suppose that there 
exists $s_{0}\in [0,1]$ such that for all $r\geq 0$, 
\[
\sup_{n\geq 0} \EE\big[ |x_{n}(s_{0})|^{r} \big] < \infty, 
\]
and that for every $\alpha>1$, there exists $\beta>0$ and $C>0$ with 
\[
\EE\big[|x_{n}(t)-x_{n}(s)|^{\beta}\big] < C|t-s|^{\alpha} \enspace \text{for all } s,t\in [0,1].
\]
If $F:C([0,1])\rightarrow \R$ is any continuous functional of polynomial growth, then 
\[
\lim_{n\rightarrow\infty} \EE\left[ F(x_{n}) \right] =\EE\left[ F(x) \right]. 
\]
\end{theorem}

\pagebreak

We show the following polynomial convergence of Harris walk to the reflected Brownian motion.

\begin{theorem} 
\label{brownian_thm_process}
Let $\{B(t) \st 0\leq t\leq 1\}$ be a standard Brownian motion and define $H^{n}(t)=H(nt)/\sqrt{n}$ for $0\leq t \leq 1$. Then for $p=1/2$,
\[
\big\{H^{n}(t) \st 0\leq t \leq 1\big\} \Rightarrow \big\{|B(t)| \st 0\leq t \leq 1\big\} \, \text{ in }\, C([0,1]).
\]  
Furthermore, if $F:C([0,1])\rightarrow \R$ is any continuous functional of polynomial growth, then 
\[
\lim_{n\rightarrow\infty} \EE[F(H^{n})] = \EE(F(|B|)). 
\]
\end{theorem}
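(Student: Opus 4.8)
The plan is to reduce everything to Donsker's invariance principle via the reflection operator $\mathcal{E}_0$, and then to upgrade the resulting weak convergence to convergence of polynomial-growth functionals by a uniform-integrability estimate.

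First I would use the identity $H = \mathcal{E}_0(S)$ from Subsection~\ref{subsection:Harris_walk}, where $S$ is the linear interpolation of the simple symmetric random walk $\{S_k\}$; at $p=1/2$ the increments $\xi_i$ have mean $0$ and variance $1$. Since $\mathcal{E}_0$ commutes with the diffusive rescaling $f\mapsto f(n\,\cdot)/\sqrt n$, writing $S^n(t)=S(nt)/\sqrt n$ gives $H^n = \mathcal{E}_0(S^n)$ exactly. The map $\mathcal{E}_0\colon C([0,1])\to C([0,1])$ is Lipschitz in the sup-norm, since $\lVert\mathcal{E}_0(f)-\mathcal{E}_0(g)\rVert_\infty \le 2\lVert f-g\rVert_\infty$ (both $f(t)$ and $\min_{0\le s\le t}f(s)$ are $1$-Lipschitz functionals of $f$). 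By Donsker's theorem $S^n\Rightarrow B$ in $C([0,1])$, so the continuous mapping theorem yields $H^n=\mathcal{E}_0(S^n)\Rightarrow\mathcal{E}_0(B)$, and by L\'evy's theorem $\mathcal{E}_0(B)(t)=B(t)-\min_{0\le s\le t}B(s)$ is a reflecting Brownian motion, equal in law to $\{|B(t)|\}_{0\le t\le1}$. This is the asserted weak convergence $H^n\Rightarrow|B|$ in $C([0,1])$.

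To promote this to $\mathbb{E}[F(H^n)]\to\mathbb{E}[F(|B|)]$ for every continuous $F$ with $|F(\gamma)|\le\lVert\gamma\rVert_\infty^r$, I would prove the uniform bound $\sup_n\mathbb{E}\big[\lVert H^n\rVert_\infty^q\big]<\infty$ for all $q\ge1$. From $\mathcal{E}_0(S^n)(t)=S^n(t)-\min_{0\le s\le t}S^n(s)$ one gets the deterministic inequality $\lVert H^n\rVert_\infty\le 2\lVert S^n\rVert_\infty = 2\max_{0\le k\le n}|S_k|/\sqrt n$. Doob's $L^q$ maximal inequality for the submartingale $(|S_k|)_{k\le n}$ together with $\mathbb{E}[|S_n|^q]\le C_q n^{q/2}$ (from comparing even moments of the Rademacher sum $S_n$ with Gaussian moments and interpolating, or via Rosenthal / Marcinkiewicz--Zygmund) gives $\mathbb{E}[\max_{k\le n}|S_k|^q]\le C_q' n^{q/2}$, hence $\mathbb{E}[\lVert H^n\rVert_\infty^q]\le 2^q C_q'$ uniformly in $n$. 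Taking $q=r(1+\delta)$ for some $\delta>0$ shows $\{F(H^n)\}_n$ is bounded in $L^{1+\delta}$, hence uniformly integrable; since $F(H^n)\Rightarrow F(|B|)$ by the previous paragraph and the continuous mapping theorem, and $\mathbb{E}[|F(|B|)|]\le\mathbb{E}[\lVert B\rVert_\infty^r]<\infty$, a uniformly integrable weakly convergent sequence has convergent means, which is the claim.

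I expect the only step needing real care to be the uniform moment bound: it is exactly what separates ``polynomial convergence'' from plain weak convergence, yet it is still routine given Doob's inequality and the sub-Gaussian tails of simple random walk. The identity $H=\mathcal{E}_0(S)$ and the Lipschitz continuity of $\mathcal{E}_0$ are already essentially in hand from Subsection~\ref{subsection:Harris_walk} and a one-line estimate, so no difficulty arises there.
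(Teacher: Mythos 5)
Your argument is correct, and the core of it — the identity $H=\mathcal{E}_{0}(S)$, the $2$-Lipschitz continuity of $\mathcal{E}_{0}$ combined with Donsker's theorem and the continuous mapping theorem, and L\'evy's theorem to identify $\mathcal{E}_{0}(B)$ with $|B|$ — is exactly the paper's route to the weak convergence $H^{n}\Rightarrow |B|$. Where you genuinely diverge is in the upgrade from weak to polynomial convergence: the paper simply invokes Theorem 9 of \cite{drmota2004stochastic} to reduce the polynomial-convergence claim to weak convergence, whereas you supply a self-contained uniform-integrability argument via the bound $\lVert H^{n}\rVert_{\infty}\leq 2\lVert S^{n}\rVert_{\infty}$, Doob's $L^{q}$ maximal inequality, and the moment bound $\mathbb{E}[|S_{n}|^{q}]\leq C_{q}n^{q/2}$, so that $\{F(H^{n})\}$ is bounded in $L^{1+\delta}$ and hence its means converge along with the distributions (via Skorokhod representation and Vitali). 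Your version costs half a page of routine estimates but removes the external dependency and makes transparent exactly what ``polynomial convergence'' requires beyond Donsker; the paper's version is shorter but leans on a black-box result tailored to such contour-type processes. Both are valid proofs of the statement.
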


\begin{proof}
Since the rescaled Harris walk $H^{n}(t)$ is uniformly bounded by $n^{-\frac{1}{2}}|S^{n}(t)|$, which has moments of all orders and satisfies the 
H\"{o}lder criterion in Theorem \ref{thm:drmota}, we only need to show that $H^{n}$ converges weakly to $|B|$. To this end, recall from Subsection 
\ref{subsection:Statement} that the linear interpolation of the $p=1/2$ Harris walk is given by $H(t)=\E_{0}(S)(t)=S(t)-\min_{0\leq r\leq t}S(r)$, 
where $S$ is the linear interpolation of symmetric simple random walk.

Donsker's Theorem shows that after scaling diffusively, $S(t)$ converges weakly to a standard Brownian motion in the space 
$C([0,1])$. That is, writing $S^{n}(t)=S(nt)/\sqrt{n}$, we have 
\[
\lim_{n\rightarrow\infty}\EE[F(S^{n})]=\EE[F(B)]
\]
for every bounded and continuous functional $F:C([0,1])\rightarrow\R$.

A direct computation shows that for any fixed $b\in [0,1]$, $\E_{b}$ is ($2$-Lipschitz) continuous and satisfies $\E_{b}(cf)=c\E_{b}(f)$ 
for all $b,c\geq 0$ (see Proposition \ref{prop:Lipschitz} (i) in Subsection \ref{subsection:regularity of column length functionals}), so 
for every bounded and continuous $G:C([0,1])\rightarrow\R$,
\[
\lim_{n\rightarrow\infty}\EE[G(H^{n})]=\lim_{n\rightarrow\infty}\EE\big[G\big(\E_{0}(S^{n})\big)\big]=\EE\big[G\big(\E_{0}(B)\big)\big],
\]
hence $H^{n}$ converges weakly to $\E_{0}(B)$. As  
\[
\E_{0}(B)(t)=B(t)-\min_{0\leq s\leq t}B(s)=_{d}-B(t)-\min_{0\leq s\leq t}\big(-B(s)\big)=\max_{0\leq s\leq t}B(s)-B(t), 
\]
L\'evy's $M-B$ theorem (see \cite[Ch.\ 2.3]{morters2010brownian}) implies $\E_{0}(B)=_{d} |B|$ and the proof is complete. 
\end{proof}

Now we can use the Lipschitz continuity of column length functionals $\max \E^{j-1}$ to obtain Theorem \ref{theorem:columns} (ii).

\begin{proof}
[\textbf{Proof of Theorem}  
\ref{theorem:columns} \textbf{\textup{(ii)}}.]
First recall that the Motzkin path $\Gamma=\Gamma(X^{n,1/2})$ agrees with the Harris walk $H$ on $[0,n]$, and has only 
downstrokes until it reaches height $0$ on $[n,\infty)$, hence all of its peaks are contained in $[0,n]$. Recall also that the excursion 
operator deletes the peak at the rightmost maximum and preserves all the other peaks. Thus by Lemma 
\ref{lemma:columnbyexcursionop}, we have 
\[
n^{-1/2}\lambda_{j}(n) =n^{-1/2}\max \E^{j-1}(\Gamma)  = n^{-1/2}\max_{[0,n]} \E^{j-1}(H|_{[0,n]})
=  \max_{0\leq t \leq 1} \E^{j-1}(H^{n}).
\]
Lemma \ref{lemma:Lipschitz} in Section \ref{section:Young Diagram} shows that the column length functionals 
$\max \E^{j-1}:C([0,1])\rightarrow \R$ are Lipschitz, so taking powers gives continuous functionals of polynomial growth, and 
the claimed convergence follows from Theorem \ref{brownian_thm_process}. A stronger version of the second part of the assertion 
(concerning orders of column lengths) is shown in Theorem \ref{thm_criticality_order} below. 
\end{proof}

To establish the order of the other top soliton lengths, we appeal to known results about the marginal densities of the ranked maxima of 
$|B|$ over all excursions. To state our conclusions precisely, note that the continuity of $B$ ensures that the random subset 
$\{t \st B(t) \neq 0\}$ of $[0,1]$ is a countable union of maximal disjoint intervals, called the \emph{excursion intervals} of $B$. 
We call an excursion interval $(a,b)$ \emph{complete} if $B(a)=B(b)=0$, and \emph{incomplete} otherwise. All of the excursion 
intervals are complete except possibly the last one $(g(t),1]$, where $g(t)=\sup\{0\leq t \leq 1 \st B(t)=0\}$ is the last zero of $B$. 

Let $\h_{1} \geq \h_{2} \geq \cdots > 0$ be the ranked sequence of values $\sup_{t\in (a,b)} |B_{t}|$ as $(a,b)$ ranges over all 
excursion intervals of $B$. The marginal distributions of the ranked heights over excursions in the reflected Brownian bridge were first 
obtained by Pitman and Yor \cite{pitman2001distribution}. Lagnoux, Mercier, and Vallois \cite{mercier2015probability} pointed out that the 
probability that the maximum of reflected Brownian motion is obtained during the last incomplete excursion is approximately $0.3069$. 
Csaki and Hu \cite{csaki2003lengths} obtained the following explicit expressions for the marginal densities of ranked maxima of 
reflected Brownian motion over all excursions, including the final meander: 

\begin{theorem}
For each $j\geq 1$ and $y>0$, 
\[
\PP\{\h_{j}\geq y\} = 2^{j+1}\sum_{k=0}^{\infty} (-1)^{k} \binom{k+j-1}{k}\left(1-\Phi\left((2k+2j-1)y\right)\right)
\]
where $\Phi(\cdot)$ is the standard normal distribution function.	
\end{theorem}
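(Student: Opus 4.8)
The result is due to Csáki and Hu \cite{csaki2003lengths}; the plan is to reduce it to a first-passage computation for Brownian motion. Write $R_t=|B_t|$ and fix a level $y>0$. I would set $\sigma_0=0$ and, for $k\ge1$, define the stopping times $u_k=\inf\{t>\sigma_{k-1}:R_t=y\}$ and $\sigma_k=\inf\{t>u_k:R_t=0\}$, so that $u_k$ is the first time during the $k$-th excursion of $R$ reaching height $y$ that it does so. Since no excursion strictly between consecutive $\sigma$'s attains height $y$, the number of excursion intervals of $|B|$ inside $[0,1]$ — complete ones together with the final incomplete meander — on which $\sup|B|\ge y$ is exactly $\#\{k:u_k\le1\}$. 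Because the $u_k$ are increasing, this gives the clean identity $\{\h_j\ge y\}=\{u_j\le1\}$, so the task reduces to computing $\PP\{u_j\le1\}$.

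Next I would extract the i.i.d. structure. By the strong Markov property applied successively at $\sigma_{k-1}$ (where $R=0$) and at $u_k$ (where $R=y$), the quantities $H_k:=u_k-\sigma_{k-1}$ and $D_k:=\sigma_k-u_k$ are jointly independent: $H_k$ has the law of the exit time of $[-y,y]$ by a Brownian motion started at $0$, and $D_k$ the law of the first hitting time of $0$ by a Brownian motion started at $y$ (no reflection occurs before that hit, so this is an ordinary first passage). Hence $u_j=\sum_{i=1}^{j}H_i+\sum_{i=1}^{j-1}D_i$, and the classical transforms $\EE[e^{-\lambda H_1}]=1/\cosh(y\sqrt{2\lambda})$ and $\EE[e^{-\lambda D_1}]=e^{-y\sqrt{2\lambda}}$ give
\[
\EE\big[e^{-\lambda u_j}\big]=\frac{e^{-(j-1)y\sqrt{2\lambda}}}{\cosh^{j}\big(y\sqrt{2\lambda}\big)},\qquad\lambda>0 .
\]
Then I would expand this. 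Writing $z=y\sqrt{2\lambda}$ and using $1/\cosh z=2e^{-z}/(1+e^{-2z})$ together with the binomial series $(1+e^{-2z})^{-j}=\sum_{m\ge0}(-1)^m\binom{m+j-1}{m}e^{-2mz}$ yields
\[
\frac{e^{-(j-1)z}}{\cosh^{j}z}=2^{j}\sum_{m\ge0}(-1)^{m}\binom{m+j-1}{m}e^{-(2m+2j-1)z}.
\]
Each summand $e^{-a\sqrt{2\lambda}}$ is the Laplace transform of the first hitting time $T_a$ of level $a>0$ by $B$, whose density is the stable-$\tfrac12$ density $p_a(t)=a(2\pi t^{3})^{-1/2}e^{-a^{2}/(2t)}$ and which satisfies $\PP\{T_a\le1\}=2(1-\Phi(a))$ by the reflection principle. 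I would then verify the density identity $f_{u_j}=2^{j}\sum_{m}(-1)^{m}\binom{m+j-1}{m}p_{(2m+2j-1)y}$: setting $a_m=(2m+2j-1)y$, the Gaussian factor $e^{-a_m^{2}/(2t)}$ overwhelms the polynomial growth of $\binom{m+j-1}{m}$, so the series converges uniformly on $[0,1]$, and its Laplace transform matches the display above by uniqueness of Laplace transforms. Integrating this identity over $t\in[0,1]$ and using $\int_0^1 p_{a_m}(t)\,dt=2(1-\Phi(a_m))$ produces
\[
\PP\{\h_j\ge y\}=\PP\{u_j\le1\}=2^{j+1}\sum_{m\ge0}(-1)^{m}\binom{m+j-1}{m}\big(1-\Phi((2m+2j-1)y)\big),
\]
which is the asserted formula.

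The conceptual heart is the first paragraph: identifying $\{\h_j\ge y\}$ with the first-passage event $\{u_j\le1\}$ and, in particular, checking that the bookkeeping correctly counts the final incomplete meander as one additional excursion interval. The main technical point is the termwise Laplace inversion in the last step — the series is an alternating, not a positive, combination of hitting-time laws, so one must lean on the Gaussian decay of the tail terms to justify both the uniform convergence of the density series on $[0,1]$ and the interchange of summation with integration.
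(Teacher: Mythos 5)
The paper does not prove this statement; it is quoted from Csáki and Hu \cite{csaki2003lengths} and used as a black box, so there is no internal proof to compare against. Your argument is a correct, self-contained derivation, and it is essentially the natural one. The reduction $\{\h_j\geq y\}=\{u_j\leq 1\}$ is right, and it correctly accounts for the final meander: the excursion interval containing $u_j$ need not be complete, but it still contributes one of the ranked heights, and the returns to zero at the $\sigma_k$ guarantee that $u_1,\dots,u_j$ lie in $j$ distinct excursion intervals. The transform computation $\EE[e^{-\lambda u_j}]=e^{-(j-1)y\sqrt{2\lambda}}\cosh^{-j}(y\sqrt{2\lambda})$ and its expansion via $(1+e^{-2z})^{-j}=\sum_m(-1)^m\binom{m+j-1}{m}e^{-2mz}$ are correct, and the $j=1$ case reproduces the classical series $\PP\{\max_{[0,1]}|B|\geq y\}=4\sum_k(-1)^k(1-\Phi((2k+1)y))$, which is a useful sanity check. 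One technical point deserves a more careful statement than ``uniform convergence on $[0,1]$'': the candidate density series is not obviously absolutely integrable over all of $[0,\infty)$, since $\sum_m\binom{m+j-1}{m}\int_0^\infty p_{a_m}(t)\,dt$ diverges, so uniqueness of Laplace transforms cannot be invoked directly for the untilted measures. The clean fix is to note that for any fixed $\lambda_0>0$ one has $\sum_m\binom{m+j-1}{m}\int_0^\infty e^{-\lambda_0 t}p_{a_m}(t)\,dt=\sum_m\binom{m+j-1}{m}e^{-a_m\sqrt{2\lambda_0}}<\infty$, so Fubini applies to the exponentially tilted series, the tilted measures are finite and have matching transforms, and uniqueness then identifies the density almost everywhere; the final interchange $\int_0^1\sum_m=\sum_m\int_0^1$ is justified exactly as you say, by $\sum_m\binom{m+j-1}{m}(1-\Phi(a_m))<\infty$ from the Gaussian tail. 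With that refinement the proof is complete.
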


Accordingly, Theorem \ref{brownian_thm_process} and Lemma \ref{lemma:columnbyexcursionop} imply

\begin{theorem} 
\label{thm_criticality_order}
At criticality, we have that for each $x> 0$
\[
\lim_{n\rightarrow \infty} \PP\left\{ \lambda_{1}(n)\leq x\sqrt{n}  \right\} 
= 1 - 4\sum_{k=0}^{\infty} (-1)^{k} (1-\Phi[(2k+1)x]).
\]
Furthermore, 
\begin{align}\label{eq:critical_lambda_j}
\limsup_{n\rightarrow \infty}\PP\left\{ \lambda_{j}(n) \leq x\sqrt{n} \right\} 
\leq 1- 2^{j+1}\sum_{k=0}^{\infty} (-1)^{k} \binom{k+j-1}{k}(1-\Phi[(2k+2j-1)x]).
\end{align}
In particular, for any $j\geq 1$, $\lambda_{j}(n)=\Theta(\sqrt{n})$.	
\end{theorem}

\begin{remark}
\textup{One might wonder whether the top soliton lengths agree with the  top excursion heights as in the subcritical phase. This would imply that the 
right-hand side of \eqref{eq:critical_lambda_j} gives the limiting distribution of $\lambda_{j}(n)/\sqrt{n}$ for all $j\geq 1$. However, we conjecture that 
this is \emph{not} the case for $p=1/2$. This is because the random variable $Y$ appearing in the proof of Lemma 
\ref{lemma:soliton_lengths_excursion_heights} would then have distribution function $G(x)=1-1/(x+2)$ \cite[Ch.\ 4.1]{Durrett}, and one cannot find 
$x_{n}\in O(\sqrt{n})$, $r_{n}\in\Omega(\sqrt{n})$ with $1-\big(1-\frac{1}{x_{n}+2}\big)^{r_{n}}\rightarrow 0$.}
\end{remark}

\section{Top soliton lengths in the supercritical regime}
\label{section:Supercritical}
In this section, we fix $p\in (1/2,1)$ and prove Theorem \ref{theorem:columns} (iii). The intuition is the following. According to Proposition 
\ref{prop:GWF}, the top soliton lengths are encoded in the first $n$ nodes of a Galton-Watson forest 
$\F=(T_{i})_{i\geq 1}\sim \mathtt{GWF}(p)$. Since the offspring distribution has mean $p/(1-p)>1$ in the supercritical regime, 
the random index $I=\min\{i \st |T_{i}|=\infty\,\}$ is almost surely finite. For $n$ large, about $np$ nodes of the infinite component 
$T_{I}$ will be exposed by the Harris walk, which climbs up along the `leftmost' infinite branch in $T_{I}$. Hence $\lambda_{1}(n)$ should 
behave like the maximum of a random walk with positive drift, and $\lambda_{2}$ will be the maximum height of the first few finite components 
$T_{1},\ldots,T_{I-1}$ together with the `bushes' attached to the infinite branch in $T_{I}$. We prove the $\lambda_{1}(n)$ assertion by 
approximating $\{H_{k}\}$ by $\{S_{k}\}$. For subsequent soliton lengths, we appeal to a duality argument: A backward Harris walk started at the 
highest node will encounter a subcritical Galton-Watson forest, so $\lambda_{2}$ for density $p$ should behave as $\lambda_{1}$ for density $1-p$; 
see Figure \ref{pic:supercritical}. 

\begin{figure*}[h]
	\centering
	\includegraphics[width=1 \linewidth]{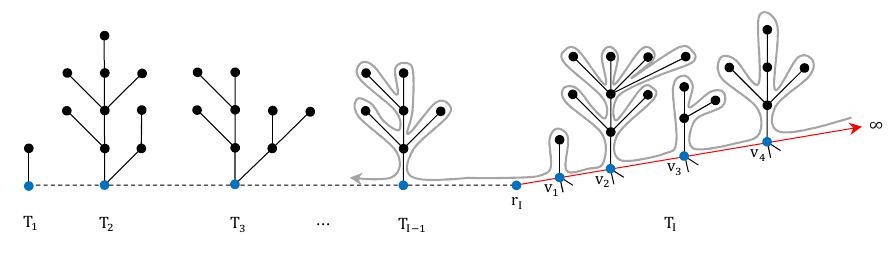}
	\vspace{-0.5cm}
	\caption{Supercritical Galton-Watson forest. $T_{I}$ is the first infinite component and the red ray is the leftmost infinite branch 
                     in $T_{I}$ on which the usual Harris walk climbs up. The grey contour is the backward Harris walk starting from the last 
                     vertex of level $N$, which encounters a subcritical Galton-Watson forest.}
	\label{pic:supercritical}
\end{figure*}

\subsection{Duality and proof of Theorem \ref{theorem:columns} (iii)}
To make the above sketch rigorous, we introduce the notion of flip and dual configurations, which will be used to provide a coupling between the random 
box-ball configurations $X^{n,p}$ and $X^{n,1-p}$. 

Given a random box-ball configuration $X^{n,p}=X^{p}\one_{[1,n]}$, define the associated box-ball configurations 
$\widetilde{X}^{n,p},\hat{X}^{n,p}:\N\rightarrow \{0,1\}$ (which we call the \emph{flip} and \emph{dual}) by
\begin{align}
\label{eq:associated_configs}
(\text{flip}) \qquad &\widetilde{X}^{n,p}(k) = 1-X^{n,p}(k), \\
(\text{dual}) \qquad&\widehat{X}^{n,p}(k) = \left(1-X^{n,p}(n-k+1)\right)\one\{1\leq k\leq n\}.
\end{align}
For each $j\geq 1$, denote $\lambda_{j}(n)=\lambda_{j}(X^{n,p})$ and $\widehat{\lambda}_{j}(n)=\lambda_{j}(\widehat{X}^{n,p})$.

\begin{figure*}[h]
	\centering
	\includegraphics[width=1 \linewidth]{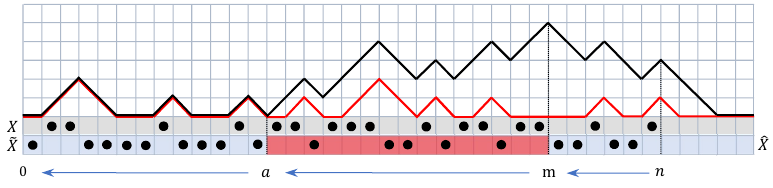}
	\caption{Box-ball configuration $X$, its flip $\widetilde{X}$, and its dual $\widehat{X}$, the latter being read from right to left. 
	             The black path is $\Gamma(X)$ and the red is $\E(\Gamma(X))$. Note that if one reads the Motzkin path $\E(\Gamma(X))$ from 
		     the righmost maximum $\mathtt{m}$ to the beginning of the associated excursion, $\mathtt{a}$, the corresponding box-ball
                     configuration coincides with the dual $\bar{X}$.}
	\label{fig:dual_config}
\end{figure*}

For $1/2<p<1$, it is easy to see from the postive drift that 
$\lambda_{1}(n)= \max_{1\leq k \leq n} H_{k} \approx \max_{1\leq k \leq n} S_{k} \approx S_{n}$, 
where $\{S_{k}\}_{k\geq 0}$ and $\{H_{k}\}_{k\geq 0}$ denote the random walk and Harris walk associated with $X^{p}$. 
For the subsequent soliton lengths, we establish a duality with corresponding soliton lengths in an appropriate subcritical configuration. 
 
\begin{lemma}
\label{lemma:weak_duality}
Fix $\eps>0$, $j\in\N$, and $\theta=(1-p)/p<1$. Then there exists a constant $c=c(p)>1$ such that for each $n,x\geq 1$, 
\[
\PP\left\{ \left| \lambda_{1}(n) - S_{n} \right| \geq x \right\} \leq c\theta^{x/2}
\]
and 
\[
\PP\left\{\left| \lambda_{j+1}(n) - \widehat{\lambda}_{j}(n) \right| \geq x\right\} \leq c\theta^{x/4}.
\]
\end{lemma}

It is straightforward to deduce Theorem \ref{theorem:columns} from the above lemma.

\begin{proof}[\textbf{Proof of Theorem} \ref{theorem:columns} \textbf{\textup{(iii)}}.]
First, we may write 
\[
\frac{\lambda_{1}(n) - (2p-1)n}{2\sqrt{p(1-p)n}}	=\frac{\lambda_{1}(n) - S_{n}}{2\sqrt{p(1-p)n}}+ \frac{S_{n}-(2p-1)n}{2\sqrt{p(1-p)n}}.
\]
The first term on the right-hand side converges in probability to zero by Lemma \ref{lemma:weak_duality}, and the second term 
converges in distribution to a standard normal by the usual central limit theorem, so the first part of the assertion follows from Slutsky's 
theorem. 
	
The concentration inequality for $\lambda_{1}(n)$ is a consequence of Lemma \ref{lemma:weak_duality} and Hoeffding's inequality applied to the 
associated random walk $S_{n}$: 
\begin{align}
\PP\big\{|\lambda_{1}(n)-(2p-1)n|\geq x\big\} & = \PP\big\{|\lambda_{1}(n)-(2p-1)n|\geq x,\,|\lambda_{1}(n)-S_{n}|\geq x/2\big\} \\
 & \qquad +\PP\big\{|\lambda_{1}(n)-(2p-1)n|\geq x,\,|\lambda_{1}(n)-S_{n}|< x/2\big\}\\
 & \leq \PP\big\{|\lambda_{1}(n)-S_{n}|\geq x/2\big\}+\PP\big\{|S_{n}-(2p-1)n|\geq x/2\big\}\\
 &  \leq c\theta^{x/4} + 2e^{-\frac{x^{2}}{8n}}\leq  Ce^{-\frac{x^{2}}{8n}}
\end{align}
for a suitable constant $C$.

Now let $\hat{\mu}_{n}=\log_{\theta^{-1}}\left(\frac{(1-2p)^{2}}{p}n \right)$. (This is the $\mu_{n}$ term from Section \ref{section:Subcritical} but 
with $p$ and  $1-p$ switched since we are now working in the supercritical regime.) Then for $j\geq 1$ fixed, Lemma \ref{lemma:weak_duality} implies
\begin{align}
\PP\big\{\lambda_{j+1}(n)\leq x+ \hat{\mu}_{n}\big\} 
 &= \PP\Big\{\lambda_{j+1}(n)\leq x+\hat{\mu}_{n},\,|\lambda_{j+1}(n)-\hat{\lambda}_{j}(n)|\leq x/2\Big\}\\
 & \qquad +\PP\Big\{\lambda_{j+1}(n)\leq x+\hat{\mu}_{n},\, |\lambda_{j+1}(n)-\hat{\lambda}_{j}(n)| > x/2\Big\}\\
 & \leq \PP\Big\{\hat{\lambda}_{j}(n)\leq 3x/2+\hat{\mu}_{n}\Big\} + c\theta^{x/8}.
\end{align}
The lower bound is established similarly:
\begin{align}
\PP\big\{\hat{\lambda}_{j}(n)\leq (x/2)+ \hat{\mu}_{n}\big\} &= \PP\Big\{\lambda_{j+1}(n)\leq (x/2)+\hat{\mu}_{n},\,  
|\lambda_{j+1}(n)-\hat{\lambda}_{j}(n)| 
\leq x/2\Big\}  \\
& \qquad +\PP\Big\{\hat{\lambda}_{j}(n)\leq x+\hat{\mu}_{n}, \,|\lambda_{j+1}(n)-\hat{\lambda}_{j}(n)| > x/2\big\} \\
&\leq \PP\Big\{\lambda_{j+1}(n)\leq x+\hat{\mu}_{n} \Big\}  + c\theta^{x/8},
\end{align}
and the assertion then follows from Theorem \ref{theorem:columns} (i).
\end{proof}

\subsection{Proof of Lemma \ref{lemma:weak_duality}}

We now prove Lemma \ref{lemma:weak_duality}, establishing a duality principle between the super- and sub-critical box ball systems. Positive drift ensures 
that $S$ and $H$ are not too different, so the first claim seems reasonable since $S$ should attain its maximum over $[0,n]$ near $n$. To explain why the 
second claim is true, let $\widehat{H}\in C_{0}^{+}(\R^{+})$ be the Harris walk for the dual configuration so that 
$\widehat{\lambda}_{1}(n)=\max \widehat{H}$. 
Now $H$ and $\widehat{H}$ are coupled in such a way that the latter is a time-reversal of $\E_{n}(S)$, which is approximated by $\E_{n}(H)$. Thus it 
all boils down to showing that the path $\E_{n}(H)$ pivoted at $n$ is close to $\E(H)=\E_{\m}(H)$, pivoted at the actual location $\m=\m(H)$ of the 
rightmost maximum of $H$. But again positive drift ensures that $H$ attains its maximum near the end. Continuity of the column length functionals can then 
be used to show that the two paths must be close to each other in an appropriate sense.

We begin by introducing the following random variable:
\begin{equation}
\label{eq:def_R}
R=\sup_{k\in \N}\bigg| \min_{1\leq i \leq k }S_{i}\bigg| = -\inf_{k\in \N}  S_{k}.
\end{equation}  
Also, let $\widetilde{S}_{k}$ and $\widetilde{H}_{k}$ be the random walk and Harris walk associated with the flip $\widetilde{X}^{n,p}$. Observe that 
$\widetilde{X}^{n,p}$ has the same law as $X^{n,1-p}$, and for each $1\leq k \leq n$, we have $\widetilde{S}_{k}=(-\xi_{1})+\cdots +(- \xi_{k})=-S_{k}$ 
and  
\begin{equation}
\label{eq:s_max_S_bd}
\widetilde{H}_{k}=\widetilde{S}_{k}-\min_{1\leq i \leq k}\widetilde{S}_{i}=\max_{1\leq i \leq k}S_{i}-S_{k}. 
\end{equation}	

In the following proposition, we show that the maximum of the Harris walk $\{H_{k}\}_{0\leq k\leq n}$ on the interval $[0,n]$ is exponentially concentrated 
around its last value $H_{n}$.  

\begin{prop} 
\label{prop:max_location}
Fix $1/2<p<1$ and let $\widehat{\theta}=p/(1-p)$. Then for any $n,x\geq 1$, 
\begin{equation} 
\label{p_n eq}
\PP\big\{ \max_{0\leq k \leq n}H_{k} - H_{n} \geq x  \big\} = \PP\big\{ \widetilde{H}_{n} \geq x \big\} 
\leq \frac{2p-1}{\widehat{\theta}^{\left\lfloor x\right\rfloor }-1}.
\end{equation}
\end{prop}

\begin{proof}
To show the first inequality, let $\mathtt{a}=\mathtt{a}(X^{n,p})$ be the location of the leftmost global minimum of the random walk 
$\{ S_{k} \}_{0\leq k \leq n}$. Then for any $k\geq \mathtt{a}$, 
\[
H_{k} = S_{k} - \min_{0\leq j \leq k} S_{j} = S_{k} - S_{\mathtt{a}}.
\]
It follows that 
\[
\max_{0\leq k \leq n} H_{k} - H_{n}  = \max_{0\leq k\leq n} \left( S_{k} - S_{\mathtt{a}} \right) - (S_{n}-S_{\mathtt{a}}) 
= \max_{0\leq k\leq n} S_{k} - S_{n}=\widetilde{H}_{n}. 
\]
Now $\widetilde{H}_{k}$ gives the height of the subcritical Harris walk which moves up with probability $1-p$, so writing $\mathtt{b}$ for the beginning 
of the excursion interval containing $n$, Equation \eqref{eq:CDF_height_sub_excursion} shows that 
\begin{align}
\PP\big\{ \max_{0\leq k \leq n}H_{k} - H_{n} \geq x  \big\} & = \PP\big\{ \widetilde{H}_{n} \geq x  \big\}
 \leq \PP\big\{ \max_{\mathtt{b}\leq k\leq n}\widetilde{H}_{k} > x-1 \big\}\\
 & \leq 1-\left( 1-\frac{1-2(1-p)}{\widehat{\theta}^{\left\lfloor x-1\right\rfloor +1}-1}\right)
 \leq \frac{2p-1}{\widehat{\theta}^{\left\lfloor x\right\rfloor }-1}.\qedhere
\end{align}
\end{proof}

\begin{prop}
\label{prop:exp_bound_R_H}
Fix $1/2<p<1$ and $\widehat{\theta}=p/(1-p)>1$. Let $R$ and $\widetilde{H}_{n}$ be as defined at \eqref{eq:def_R} and \eqref{eq:s_max_S_bd}. 
Then there exists a constant $c=c(p)>0$ such that for all $n,x\geq 1$,
\[
\PP\left\{ R+2\widetilde{H}_{n} \geq x  \right\}\leq c\theta^{x/2}.
\]
\end{prop}

\begin{proof}
Casing out according to the value of $\xi_{1}=\one\big\{X^{p}(1)=1\big\}-\one\big\{X^{p}(1)=0\big\}$ shows that for any integer $k\geq 1$, 
$\PP\big\{R\leq k\big\}=p\PP\big\{R\leq k+1\big\}+(1-p)\PP\big\{R\leq k-1\big\}$, hence 
\[
\PP\big\{R=k\big\}=\PP\big\{R\leq k\big\}-\PP\big\{R\leq k-1\big\}=p\big(\PP\big\{R\leq k+1\big\} - \PP\big\{R\leq k-1\big\}\big)
=p\big(\PP\big\{R=k\big\}+\PP\big\{R=k+1\big\}\big),
\]
so $\PP\big\{R=k+1\big\}=\widehat{\theta}^{-1}\PP\big\{R=k\big\}$. It follows that 
\[
\PP\big\{R\geq x\big\}=\sum_{k=x}^{\infty}\PP\big\{R=1\big\}\widehat{\theta}^{1-k}=\frac{\widehat{\theta}^{1-x}}{1-\widehat{\theta}^{-1}}
\PP\big\{R=1\big\}
\]
for each $x\in\N$. Thus Proposition \ref{prop:max_location} implies that there is a $c=c(p)>0$ such that
\[
\PP\left\{ R+2\widetilde{H}_{n} \geq x  \right\} \leq \PP\{R\geq x/2\}+\PP\{H\geq x/2\}
 \leq \frac{\widehat{\theta}^{1-\lfloor x/2 \rfloor}}{1-\widehat{\theta}^{-1}} + \frac{2p-1}{\widehat{\theta}^{\left\lfloor x/2\right\rfloor }-1}
\leq c\widehat{\theta}^{-x/2}
\]
for all $x\geq 2$.
\end{proof}

We are now ready to prove Lemma \ref{lemma:weak_duality}.

\begin{proof}[\textbf{Proof of Lemma} \ref{lemma:weak_duality}.]
Fix $n\geq j$ and let $R$ and $\widetilde{H}_{n}$ be as defined at \eqref{eq:def_R} and \eqref{eq:s_max_S_bd}, respectively. According to the 
exponential bound in Proposition \ref{prop:exp_bound_R_H}, it suffices to show the following inequalities: 
\begin{equation}
\label{RQ eq}
\bigg|\lambda_{1}(n) - S_{n}\bigg| \leq R+2\widetilde{H}_{n} \quad \text{and}\quad 
\bigg|\lambda_{j+1}(n)-\widehat{\lambda}_{j}(n)\bigg| \leq 2R+4\widetilde{H}_{n}.
\end{equation}
Note that the first inequality in \eqref{RQ eq} follows from Lemma \ref{lemma:columnbyexcursionop} and the triangle inequality upon observing that 
\[
\Big|\max_{1\leq k \leq n}H_{k} - \max_{1\leq k \leq n}S_{n}\Big| 
\leq \max_{1\leq k \leq n} \Big|H_{k} - S_{k}\Big|=\max_{1\leq k \leq n}\Big| \min_{1\leq i \leq k }S_{i}\Big|
\leq \sup_{k\in \N} \Big| \min_{1\leq i \leq k }S_{i}\Big|=R.
\]

To establish the second inequality, let $n^{\ast}:=\m(S \one_{[0,n]})$ denote the rightmost maximum of $S$ on $[0,n]$, and 
define the sequence of random variables $\{\check{S}_{k}\}_{0\leq k\leq n}$ by $\check{S}_{k}=S_{k}$ for all $k\neq n$ and 
$\check{S}_{n}=S_{n^{\ast}}$. As usual, let $\check{S}$ denote the linear interpolation of $\{\check{S}_{k}\}$. By construction,
$\lVert \check{S} - S \rVert_{\infty} =\widetilde{H}_{n}$. Also, observe that $\E_{n}(S)(n)=0=\E(\check{S})(n)$, and for $0\leq j <n$, writing 
$m_{j}=\min(S_{j},\ldots,S_{n-1})$, we have $\E_{n}(S)(j)=S_{j}-\min(m_{j},S_{n})$ and 
$\E(\check{S})(j)=S_{j}-\min(m_{j},S_{n^{\ast}})=S_{j}-m_{j}$. If $\min(m_{j},S_{n})=S_{n}$, then $m_{j}=S_{n}+\one\{S_{n}<m_{j}\}$. 
It follows that 
\[
\E_{n}(S)(j)=\E(\check{S})(j)+m_{j}-\min(m_{j},S_{n})=\E(\check{S})(j)+\one\{S_{n}<m_{j}\}. 
\]

Writing $\widehat{S}_{k}=-(S_{n}-S_{n-k})$ for the random walk associated with the dual configuration, we see that the Harris walk 
$\widehat{H}_{k}$ can be written as  
\[
\widehat{H}_{k} = (S_{n-k}-S_{n}) - \min_{0\leq j \leq k} (S_{n-j}-S_{n}) = S_{n-k}  -\min_{n-k\leq i \leq n}S_{i}
 = \E(\check{S})(n-k)+\one\{S_{n}<m_{n-k}\}
\]
for all $0\leq k \leq n$. As $S_{n}<m_{n-k}$ implies $\widetilde{H}_{n}=\lVert \check{S} - S \rVert_{\infty}\geq 1$, we have 
\[
\left| \widehat{H}_{k} - \E(\check{S})(n-k) \right| \leq \widetilde{H}_{n}.
\]
for all $k\geq 1$.
Since the functional $\max \E^{j-1}$ is invariant under time reversal, the above observation together with  Lemmas \ref{lemma:columnbyexcursionop} 
and \ref{lemma:Lipschitz} yields
\[
\bigg|\widehat{\lambda}_{j}(n) - \max \E^{j}(\check{S})\bigg|
 = \bigg|\max \E^{j-1}(\widehat{H}) - \max \E^{j}(\check{S})\bigg| \leq  2\widetilde{H}_{n}.
\]
Finally, the triangle inequality, Lemma \ref{lemma:columnbyexcursionop}, and Lemma \ref{lemma:Lipschitz} give 
\begin{align*}
\bigg|\lambda_{j+1}(n)-\widehat{\lambda}_{j}(n)\bigg| & \leq \left| \max \E^{j}(H) - \max \E^{j}(S)\right|
 + \left| \max \E^{j}(S) - \max \E^{j}(\check{S})\right|+2\widetilde{H}_{n} \\
 & \leq 2\lVert  H -  S\rVert_{\infty} + 2\lVert S-\check{S} \rVert_{\infty} +2\widetilde{H}_{n} \leq 2R+4\widetilde{H}_{n}.\qedhere
\end{align*}
\end{proof}

\section{Random 312-avoiding permutations}
\label{section:Permutation}

In this section, we discuss some relations between box-ball systems and 312-avoiding permutations and prove Theorem \ref{theorem:permutations}. 

Recall that for a given permutation $\sigma$, one can use the Robinson-Schensted algorithm (see \cite[Ch. 3.1]{Sagan}) to obtain a pair of standard Young 
tableaux with common shape $\mathtt{RS}(\sigma)$. Greene's theorem \cite{greene1982extension} relates the sum of the lengths of the first $k$ rows 
(resp. columns) of the Young diagram $\mathtt{RS}(\sigma)$ to the length of a longest subsequence in $\sigma$ which can be obtained by taking the union 
of $k$ increasing (resp. decreasing) subsequences. In Proposition \ref{prop:permutation_Greene}, we show that if $\sigma$ is $312$-avoiding, then a 
`naive' version of Greene's theorem holds: We can subsequently delete longest increasing/decreasing subsequences to obtain subsequent row/column 
lengths of $\mathtt{RS}(\sigma)$. Hence, roughly speaking, Theorem \ref{theorem:permutations} gives the asymptotics of the `$k^{\text{th}}$ longest' 
increasing/decreasing subsequences of a random $312$- (or $231$-) avoiding permutation. 

For a precise statement, we introduce some notation. Given two finite sequences $\alpha$, $\beta$ of positive integers, denote by $\alpha\setminus\beta$ 
the sequence obtained by deleting all elements in $\beta$ from $\alpha$. Denote by $\alpha_{+}$ (resp. $\alpha_{-}$) an arbitrary longest increasing 
(resp. decreasing) subsequence of $\alpha$. Furthermore, let $\alpha_{+}^{\ast}$ (resp. $\alpha_{-}^{\ast}$) be the unique longest increasing 
(resp. decreasing) subsequence in $\alpha$ such that the sum of all numbers used in $(\alpha_{+}^{\ast})^{-1}$ (resp. $(\alpha_{-}^{\ast})^{-1}$) is as 
small (resp. large) as possible. This ensures that $\sigma_{+}^{\ast}$ (resp. $\sigma_{-}^{\ast}$) is the `leftmost' (resp. `rightmost') longest increasing 
(resp. decreasing) subsequence in $\sigma$. For instance, if $\sigma = 146532$, then both $146$ and $145$ are longest increasing subsequences, where 
the former is $\sigma_{+}^{\ast}$. The following proposition is proved in Appendix \ref{subsection:statistics of 312-avoiding permutations}.

\begin{prop}
\label{prop:permutation_Greene}
Let $\tau$ be a 312-avoiding permutation and fix arbitrary $\tau_{-}$. Then $\mathtt{RS}(\tau\setminus \tau_{-})$ is obtained from $\mathtt{RS}(\tau)$ 
by deleting its first column. Moreover, $\mathtt{RS}(\tau\setminus \tau_{+}^{\ast})$ is obtained from $\mathtt{RS}(\tau)$ by deleting its first row.
\end{prop}

In order to prove Theorem \ref{theorem:permutations}, we begin by explaining (an equivalent version of) the construction of the time-invariant Young diagram 
introduced in \cite{torii1996combinatorial}, which was built upon a connection between box-ball configurations and $312$-avoiding permutations. 
The first step is to map a box-ball configuration $X_{0}$ of $m$ balls to a $312$-avoiding permutation $\sigma=\sigma(X_{0})\in \ess_{m}^{312}$ using 
the pushing and popping stack operations from \cite[Ch.\ 2.2.1]{Knuth}. To do so, label the balls $1,\ldots,m$ from left to right so that the $i^{\text{th}}$ 
ball gets label $i$. Then the one-line notation for $\sigma$ gives the left to right labels of the balls after a single update $X_{0}\mapsto X_{1}$. That is, 
we push the symbol $1$ onto an empty stack at the first ball and then, advancing to the right, pop the top of the stack off for storage at each empty box and 
push $k$ onto the stack upon encountering the $k^{\text{th}}$ ball. See Figure \ref{fig:stack} for an illustration. 

\begin{figure*}[h]
	\centering
	\includegraphics[width=0.95 \linewidth]{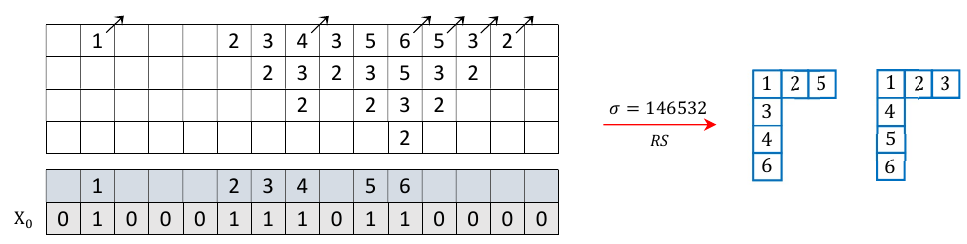}
	\caption{Construction of the $312$-avoiding permutation corresponding to the box-ball environment in the bottom row 
		     via push-pop operations. The second row from the bottom indicates the labels of the balls, and the columns in the upper 
		     table give the contents of the right-sweeping stack. The resulting permutation is $\sigma=1 \, 4 \, 6 \, 5 \, 3 \, 2$.}
	\label{fig:stack}
\end{figure*}

To get a Young diagram from this stack-representable permutation $\sigma(X_{0})$, one applies the Robinson-Schensted algorithm 
to obtain a pair of standard Young tableaux, and records their common shape as $\mathtt{RS}(\sigma(X_{0}))$. 
It was shown in \cite{torii1996combinatorial} that $\mathtt{RS}(\sigma(X_{s}))$ is invariant in $s\geq 0$ and its $j^{\text{th}}$ column length is 
the $j^{\text{th}}$ longest soliton length in the system. Thus, by Lemma \ref{lemma:invariance_Youngdiagram}, this construction gives the same 
Young diagram which was obtained by hill-flattening operations applied to the Motzkin path. 

\begin{prop}
\label{prop:permutation_Motzkin}
Let $X_{0}:\N_{0}\rightarrow \{0,1\}$ be a finitely supported box-ball configuration. Then
\[
\mathtt{RS}(\sigma(X_{0})) = \Lambda(X_{0})=  \Lambda(\Gamma(X_{0})).
\]
\end{prop}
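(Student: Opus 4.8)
The plan is to obtain Proposition \ref{prop:permutation_Motzkin} by assembling two ingredients: the invariance statement proved in this paper, Lemma \ref{lemma:invariance_Youngdiagram}, and the classical theorem of Torii, Takahashi, and Satsuma \cite{torii1996combinatorial} identifying the RS shape of the stack-representable permutation attached to a box-ball state with the soliton content of the system. Note first that the second equality $\Lambda(X_{0}) = \Lambda(\Gamma(X_{0}))$ is precisely the ``moreover'' clause of Lemma \ref{lemma:invariance_Youngdiagram}, so there is nothing further to do there. Hence the entire content is the first equality $\mathtt{RS}(\sigma(X_{0})) = \Lambda(\Gamma(X_{0}))$; and since $\Lambda(X_{0})$ is \emph{by definition} the Young diagram whose $j^{\text{th}}$ column has length equal to the $j^{\text{th}}$ longest soliton length, it is equivalent to show that the $j^{\text{th}}$ column of $\mathtt{RS}(\sigma(X_{0}))$ has this same length.

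First I would spell out that the permutation $\sigma(X_{0})$ produced here by the push--pop procedure is the permutation to which \cite{torii1996combinatorial} applies the RS algorithm. This is bookkeeping: labelling the $m$ balls $1,\ldots,m$ from left to right, the single update $X_{0}\mapsto X_{1}$ carries ball $j$ --- picked up at the $j^{\text{th}}$ upstroke of $\Gamma(X_{0})$ --- to the first downstroke on its right, and reading the new left-to-right order of the labels is exactly the output stream of a stack that is fed $1,\ldots,m$ and pops at every empty site (cf.\ \cite[Ch.\ 2.2.1]{Knuth}); in particular $\sigma(X_{0})\in\ess_{m}$ is well defined and $312$-avoiding. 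One then checks, by tracing the two constructions, that this $\sigma(X_{0})$ coincides (after the standard reindexing) with the stack-representable permutation used in \cite{torii1996combinatorial}.

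Next I would invoke the theorem of \cite{torii1996combinatorial}: the common shape $\mathtt{RS}(\sigma(X_{s}))$ of the two RS tableaux does not depend on the sweep index $s$, and its $j^{\text{th}}$ column length is the length of the $j^{\text{th}}$ longest soliton in the system $(X_{s})_{s\ge 0}$. Since a Young diagram is determined by its column lengths, this is exactly the statement $\mathtt{RS}(\sigma(X_{0})) = \Lambda(X_{0})$, and Lemma \ref{lemma:invariance_Youngdiagram} then closes the chain $\mathtt{RS}(\sigma(X_{0})) = \Lambda(X_{0}) = \Lambda(\Gamma(X_{0}))$.

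I expect the main obstacle to be the identification in the second step: reconciling the conventions of \cite{torii1996combinatorial} --- which builds its Young diagram via RS and proves time-invariance through the combinatorial-$R$/crystal machinery --- with the elementary push--pop description and the hill-flattening construction of Section \ref{section:Young Diagram}. Once the permutations are matched, the rest is immediate. A fully self-contained alternative, avoiding \cite{torii1996combinatorial} altogether, would be to read off from the arc (nesting) diagram of $\Gamma(X_{0})$ that increasing subsequences of $\sigma(X_{0})$ correspond to antichains and decreasing subsequences to chains in the edge-ancestor poset of the forest $\F(\Gamma(X_{0}))$, and then to invoke Greene's theorem together with the lopping description in Proposition \ref{prop:Youngdiagram_forest}(iv) to match the columns of $\mathtt{RS}(\sigma(X_{0}))$ with those of $\Lambda(\Gamma(X_{0}))$; there the delicate point would be verifying that the greedy ``remove a longest root-to-leaf path'' operator $\L$ computes the maximal union of $k$ chains in the forest poset, which is what the column side of Greene's theorem demands.
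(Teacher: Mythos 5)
Your proof is correct and follows essentially the same route as the paper: the paper justifies Proposition \ref{prop:permutation_Motzkin} in the paragraph preceding it by citing the Torii--Takahashi--Satsuma result that $\mathtt{RS}(\sigma(X_{s}))$ is time-invariant with columns given by the soliton lengths, and then invoking Lemma \ref{lemma:invariance_Youngdiagram} to identify this with the hill-flattening diagram $\Lambda(\Gamma(X_{0}))=\Lambda(X_{0})$. Your additional remarks on matching the push--pop permutation with the conventions of \cite{torii1996combinatorial}, and the sketched Greene's-theorem alternative, go beyond what the paper writes but do not change the argument.
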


The following proposition (proved in Appendix \ref{subsection:statistics of 312-avoiding permutations}) shows that there is a bijection between 
$312$-avoiding  permutations of length $n$ and Dyck paths of length $2n$ which `factors through' box-ball configurations in a natural way. Let 
$\ess_{n}^{312}$ be the set of all $312$-avoiding permutations of length $n$ and let $\textup{Dyck}_{2n}$ be the set of all Dyck paths of length 
$2n$---that is, lattice paths from $(0,0)$ to $(2n,0)$ consisting only of upstrokes and downstrokes and never dipping below the horizontal axis. 

\begin{prop}
\label{prop:bijection_312-Dyck}
${}$
\begin{description}[leftmargin=!,labelwidth=\widthof{\bfseries [(ii)]}]
	\item[(i)] There exists a bijection $\varphi:\textup{Dyck}_{2n}\rightarrow \ess_{n}^{312}$.
	\item[(ii)] For each $\tau\in \ess_{n}^{312}$ and $\digamma\in \textup{Dyck}_{2n}$ such that $\varphi(\digamma)=\tau$, there is a box-ball 
	configuration $X_{0}$ such that $\tau=\sigma(X_{0})$ and $\digamma=\Gamma(X_{0})$.
\end{description}
\end{prop}

We now prove Theorem \ref{theorem:permutations} using similar ideas from the proof of Theorem \ref{theorem:rows} 
together with some known results on random Dyck paths and random walk excursions.

\begin{proof}[\textbf{Proof of Theorem} \ref{theorem:permutations}]
Recall that $\sigma\in \ess_{n}^{312}$ if and only if $\sigma^{-1}\in \ess_{n}^{231}$, so the map $\sigma \mapsto \sigma^{-1}$ preserves the uniform 
distribution on the sets $\ess_{n}^{312}$ and $\ess_{n}^{231}$. Moreover, $\mathtt{RS}(\sigma)=\mathtt{RS}(\sigma^{-1})$. Hence it suffices to prove 
the assertion only for the 312-avoiding permutations. 	
		
Let $\digamma$ be a Dyck path of length $2n$ and let $\tau=\varphi(\digamma)$ be the corresponding $312$-avoiding permutation. Proposition 
\ref{prop:bijection_312-Dyck} enables us to choose a box-ball configuration $X_{0}$ such that $\tau=\sigma(X_{0})$ and $\digamma=\Gamma(X_{0})$, 
and Proposition \ref{prop:permutation_Motzkin} implies that $\mathtt{RS}(\tau)=\Lambda(\digamma)$. If we denote by $\Sigma^{n}$ and 
$\digamma^{n}$ uniformly random elements of $\ess^{312}_{n}$ and $\textup{Dyck}_{2n}$, this yields 
\begin{equation}
\label{eq:pf_corollary_permutation_1}
\mathtt{RS}(\Sigma^{n}) =_{d} \Lambda(\digamma^{n}).
\end{equation}
Now the contour process described in Subsection \ref{subsection:Trees} gives a bijection between Dyck paths of length $2n$ and rooted plane trees with 
$n+1$ nodes, so part (i) of Theorem \ref{theorem:permutations} follows from \eqref{eq:pf_corollary_permutation_1} and Proposition 
\ref{prop:Youngdiagram_forest}. 
	
Part (iii) also follows easily from known results. Indeed, it is well known that under diffusive scaling the random walk excursion converges weakly to a 
standard Brownian excursion \cite{aldous1993continuum}. Moreover, by Theorem \ref{thm:drmota}, the convergence is polynomial. 
Thus (iii) follows from \eqref{eq:pf_corollary_permutation_1} and Lemmas \ref{lemma:columnbyexcursionop} and \ref{lemma:Lipschitz}.	
	
Lastly, we establish the strong law for $\rho_{i}(\digamma^{n})$ stated in part (ii). To begin, fix $i\geq 1$, and let $\{S_{k}\}_{k\geq 0}$ be a 
simple symmetric random walk with $S_{0}=0$. We may view the uniformly random Dyck path $\digamma^{n}$ of length $2n$ as the trajectory 
of $S_{k}$ over the interval $[0,2n]$ conditioned to stay non-negative and satisfy $S_{2n}=0$. By (\ref{eq:pf_corollary_permutation_1}) 
and the hill-flattening procedure, $\rho_{i}(\digamma^{n})$ equals the number of subexcursions of $\digamma^{n}$ of height $i$. Recall the definitions of 
$\mu_{i}$ and $J_{\ell}^{i}$ given in Lemma \ref{lemma:chernoff} and above the same lemma, respectively. 
Let $N_{i}(n) = \sum_{\ell=0}^{n} J_{\ell}^{i}$. 
Then $N_{i}(2n) = \rho_{i}(\digamma^{n}) $, so for all $n\geq 1$ and $\eps<1/2n$, 
\begin{eqnarray*}
\PP\left\{\left|\frac{\rho_{i}(\digamma^{n})}{2n} - \mu_{i} \right| > \eps \right\} & \leq
 & \PP\left\{\left| \frac{N_{i}(2n)}{2n} - \mu_{i} \right|>\eps \,\bigg| \, \text{$S_{k}$ is a Dyck path over $[0,2n]$ } \right\}\\
 &\leq & \frac{\PP\{|N_{i}(2n)/2n - \mu_{i}|>\eps\} }{\PP\left\{ \text{$S_{k}$ is a Dyck path over $[0,2n]$ } \right\}}.
\end{eqnarray*}

It is well known that the number of Dyck paths of length $2n$ is the $n^{\text{th}}$ Catalan number $\frac{1}{n+1} \binom{2n}{n}$, so by Stirling's 
approximation, $\PP\left\{\text{$S_{k}$ is a Dyck path over $[0,2n]$ } \right\}\sim n^{-3/2}/\sqrt{\pi}$. Now by Lemma \ref{lemma:chernoff} with 
$m=4$ and $\eps=\eps(n)= 1/\log n \searrow 0$, we get 
\[
\PP\left\{\left| \frac{\rho_{i}(\digamma^{n})}{2n} - \mu_{i} \right| > \eps(n) \right\} = O((\log n)^{8} n^{-3/2}).
\]
In particular, these probabilities are summable, so the first Borel-Cantelli lemma implies $\rho_{i}(\digamma^{n})/2n \rightarrow \mu_{i}$ a.s. as 
$n\rightarrow \infty$. 
\end{proof}

\pagebreak

\appendix
\renewcommand{\appendixname}{}
\numberwithin{equation}{section}
\numberwithin{figure}{section}
\numberwithin{table}{section}

\section{Proofs of combinatorial lemmas}
\label{Section:proof of combinatorial Lemmas}

In this appendix, we provide proofs of Lemmas \ref{lemma:invariance_Youngdiagram}, \ref{lemma:columnbyexcursionop}, and 
\ref{lemma:Lipschitz}, and Propositions \ref{prop:permutation_Greene} and \ref{prop:bijection_312-Dyck}.

\subsection{Time invariance of the Young diagram} 
\label{Subsection:time invariance of the Young diagram}

Our proof of Lemma \ref{lemma:invariance_Youngdiagram} is similar to the argument from \cite{torii1996combinatorial}, 
which is formulated in terms of Dyck words intead of Motzkin paths. The argument is simplified by Proposition \ref{prop:readbackward}.

To begin, recall that given a box-ball configuration $X_{s}$ of finite support, the associated lattice path $\Gamma(X_{s})$ is 
constructed by reading $X_{s}$ from left to right: Starting at height $0$, increase by $1$ every time a $1$ is encountered, 
decrease by $1$ whenever a $0$ is encountered at positive height, and remain at height $0$ otherwise. A simple but useful 
observation is that reading $X_{s}$ from right to left produces the lattice path $\Gamma(X_{s-1})$. More precisely, let 
$(X_{s})_{s\geq 0}$ be a box-ball system started from a finitely supported configuration $X_{0}$. For each $s\geq 0$, let 
$r_{s}=\max\{k\geq 0 \st X_{s}(k)=1\}$ be the location of the rightmost 1 at time $s$. Construct a (backward) lattice path 
$\cev{\Gamma}(X_{s}):\N_{0}\rightarrow \N_{0}$ by $\cev{\Gamma}(X_{s})_{k}=0$ for $k\geq r_{s}$ and 
\[
\cev{\Gamma}(X_{s})_{k} = 
\begin{cases}
\cev{\Gamma}(X_{s})_{k+1} +1 & \text{if $X_{s}(k+1)=1$} \\ 
\cev{\Gamma}(X_{s})_{k+1} -1 & \text{if $X_{s}(k+1)=0$ and $\cev{\Gamma}(X_{s})_{k+1}\geq 1$} \\
0 & \text{if $\cev{\Gamma}(X_{s})_{k+1}=X_{s}(k+1)=0$} \\ 
\end{cases}
\]
for $0\leq k < r_{s}$. See Figure \ref{invariance_1_pic} for an illustration. 
In this appendix, we denote the ordinary lattice path $\Gamma$ by $\vec{\Gamma}$ to emphasize the reading direction. 

\begin{figure*}[h]
\centering
\includegraphics[width=.95 \linewidth]{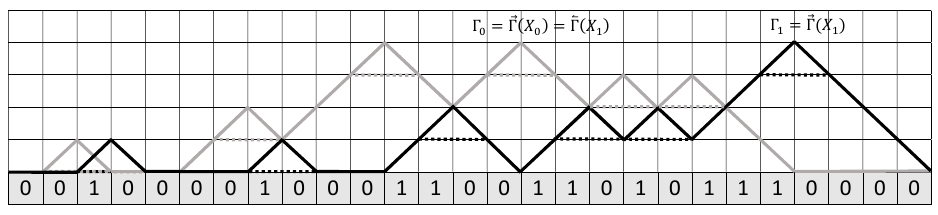}
	\caption{The environment is $X_{1}$ where $X_{0}$ is the environment given in Figure \ref{Young diagram}. 
	The black path is $\vec{\Gamma}(X_{1})$ and the grey path is $\cev{\Gamma}(X_{1})$. Notice that the latter 
	coincides with the black path in Figure \ref{Young diagram}.}
\label{invariance_1_pic}
\end{figure*}

\begin{prop}
\label{prop:readbackward}
$\displaystyle{\cev{\Gamma}(X_{s+1}) = \vec{\Gamma}(X_{s})}$ for all $s\geq 0$.
\end{prop}

\begin{proof}
Fix $s\geq 0$, and observe that both paths are $0$ on $[r_{s+1},\infty)$, so the assertion holds on this interval. 
Now suppose the paths agree on $[k+1,\infty)$ for some $k<r_{s+1}$. We must show that 
$\cev{\Gamma}(X_{s+1})_{k} = \vec{\Gamma}(X_{s})_{k}$. 
	
The definition of the box-ball dynamics shows that $X_{s+1}(k+1)=1$ if and only if 
$\vec{\Gamma}(X_{s})_{k}-1=\vec{\Gamma}(X_{s})_{k+1}$, hence  
\[
\vec{\Gamma}(X_{s})_{k} - \vec{\Gamma}(X_{s})_{k+1} = 1 
 \Longleftrightarrow  X_{s+1}(k+1) = 1
 \Longleftrightarrow \cev{\Gamma}(X_{s+1})_{k} - \cev{\Gamma}(X_{s+1})_{k+1} = 1.
\]
The inductive hypothesis implies
\begin{align*}
\vec{\Gamma}(X_{s})_{k} = \vec{\Gamma}(X_{s})_{k+1}  
& \Longleftrightarrow  \text{$X_{s+1}(k+1) = 0$ and $\vec{\Gamma}(X_{s})_{k+1}=0$} \\
& \Longleftrightarrow  \text{$X_{s+1}(k+1) = 0$ and $\cev{\Gamma}(X_{s+1})_{k+1}=0$} \\
& \Longleftrightarrow  \cev{\Gamma}(X_{s+1})_{k} = \cev{\Gamma}(X_{s+1})_{k+1}
\end{align*}
and 
\begin{align*}
\vec{\Gamma}(X_{s})_{k} - \vec{\Gamma}(X_{s})_{k+1}=-1  
& \Longleftrightarrow  \text{$X_{s+1}(k+1) = 0$ and $\vec{\Gamma}(X_{s})_{k+1}\geq 1$} \\
& \Longleftrightarrow  \text{$X_{s+1}(k+1) = 0$ and $\cev{\Gamma}(X_{s+1})_{k+1}\geq 1$} \\
& \Longleftrightarrow  \cev{\Gamma}(X_{s+1})_{k} - \cev{\Gamma}(X_{s+1})_{k+1}=-1.\qedhere
\end{align*}
\end{proof}

To facilitate the proof of Lemma  \ref{lemma:invariance_Youngdiagram}, it is convenient to reformulate the procedure for building 
Young diagrams row by row: Rather than flatten hills, we \emph{contract} peaks by deleting the upstroke-downstroke 
pair and then identifying the endpoints so that the path remains connected. The number of hills after flattening is the same as 
the number of peaks after contracting, so everything is exactly same as before. The advantage here is that if one begins with an 
$h$-restricted Motzkin path, then the hills are always peaks and the Motzkin paths are always $h$-restricted. Moreover, the 
contraction operation can be understood in terms of the environment as deleting $1 \, 0$ patterns.

\begin{proof}[\textbf{Proof of Lemma} \ref{lemma:invariance_Youngdiagram}.] 
The second part of the assertion clearly holds for all stable box-ball configurations $X_{0}:\N\rightarrow \{0,1\}$ of finite support. 
Since the system always stabilizes, the second part follows from the time invariance as stated in the first part. 
	
Now let $(X_{s})_{s\geq 0}$ be as before. To show the time invariance of $\Lambda(X_{s})$, recall that the construction 
of $\Lambda(X_{s})$ begins by counting the  number of peaks in the path corresponding $X_{s}=X_{s}^{(0)}$. This is equal to the 
number of $1 \, 0$ patterns, which is equal to the number of $1$-strings, which is equal to the number of $0 \, 1$ patterns. 
The length of the first row of $\Lambda(X_{s})$ is given by this number. The peaks are then contracted by deleting the $1 \, 0$ 
patterns from $X_{s}$ to obtain $X_{s}^{(1)}$ and the process is repeated with $\Gamma(X_{s}^{(1)})$. At each step, the 
$1$-strings are counted, the diagram is updated, and the $1 \, 0$ patterns are deleted, continuing until the path consists only of 
$h$-strokes.
	
The key insights are that the number of $1$ strings is the same regardless of whether the environment is read from left to right 
or conversely, and that the number of $1$-strings after $1 \, 0$ patterns are deleted is the same as the number of $1$ strings 
after $0 \, 1$ patterns are deleted. In the first case, each $1$ string either decreases in length by $1$ (possibly disappearing), 
or it merges with the string on its right. In the second, each string either decreases in length by $1$ or merges with the string 
on its left.
	
\begin{figure*}[h]
\centering
\includegraphics[width=0.7 \linewidth]{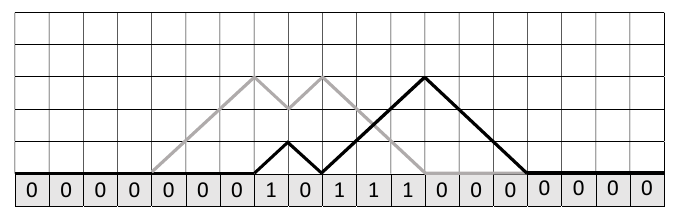}
	\caption{The environment is formed by deleting either $1 \, 0$ patterns or $0 \, 1$ patterns from the environment in 
	Figure \ref{invariance_1_pic}. The corresponding left-right (black) and right-left (grey) lattice paths have the same number 
	of hills as the flattened paths in Figure \ref{invariance_1_pic}.}
\label{invariance_2_pic}
\end{figure*}
\vspace{0.2cm}
	
Now for any fixed $s\geq 0$, $\vec{\Gamma}(X_{s})$ and $\vec{\Gamma}(X_{s+1})$ can be read off from $X_{s+1}$ by 
proceeding from right to left and from left to right, respectively. The update rule for the former is to count $1$-strings and then 
delete $1 \, 0$ patterns, and the update rule for the latter is to count $1$-strings and then delete $0 \, 1$ patterns. 
By the previous observations, both result in the same final Young diagram. 

At this point, it remains only to show that soliton lengths are given by the column lengths of the Young diagram 
$\Lambda(X_{0})$. To see that this is so, observe that the path $\Gamma(X_{\tau})$, which corresponds to the first 
stable configuration, consists of a series of single peaks of nondecreasing height, each as tall as the length of the associated soliton. 
As each flattening step reduces the height of the peaks by $1$, we see that the number of rows of $\Lambda(X_{\tau})$ 
having length at least $\ell$ corresponds to the number of solitons of length at least $\ell$. Therefore, the columns of 
$\Lambda(X_{\tau})$ encode the soliton lengths, so the same is true of $\Lambda(X_{0})$ by invariance.
\end{proof}

\subsection{Extracting column lengths with excursion operators}
\label{subsection:column lengths from excursion operators}

In this subsection, we prove Lemma \ref{lemma:columnbyexcursionop}. The key observation is that the hill-flattening and excursion operators 
commute on the space of Motzkin paths.

To begin, we need to establish a couple of technical results. First, for any interval $I\subseteq \R$, recall that $C_{0}^{+}(I)$ denotes the space of 
continuous functions $I\rightarrow [0,\infty)$ with compact support. For any $f\in C_{0}^{+}(I)$, we denote by $\textup{supp}^{+}(f)$ the open set 
$\{x\in I\,:\, f(x)>0 \}$, which is a finite disjoint union of open intervals. Accordingly, we may write 
$\textup{supp}^{+}(f)=\bigsqcup_{i=1}^{n} (c_{i},d_{i})$, where $d_{i}<c_{j}$ if $i<j$. 
We call $J_{i}:=(c_{i},d_{i})$ the $i^{\text{th}}$ \textit{excursion interval} of $f$. Recall that $\I(\Gamma)$ denotes the set of hill intervals 
of $\Gamma$ (see the beginning of Subsection \ref{subsection:Youngdiagram_construction}).

\begin{prop}
\label{prop:numberofhills}
Fix a Motzkin path $\Gamma$ and let $x\in\N$ be contained in a hill interval $I_{x}$ of $\,\Gamma$. 
Denote $\textup{supp}^{+}(\E_{x}(\Gamma))=\bigsqcup_{i=1}^{n} J_{i}$ as above. Then $\Gamma-\E_{x}(\Gamma)$ 
is constant on each $J_{i}$. In addition, $\I(\E_{x}(\Gamma))=\I(\Gamma) \setminus\{I_{x}\}$ and  
$\max\E^{j-1}(\Gamma)\geq 1$ for all $1\leq j \leq \rho(\Gamma)$. 
\end{prop}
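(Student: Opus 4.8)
The plan is to pin down an explicit description of $\E_x(\Gamma)$ near the hill $I_x$, and then read off all three assertions from it. Write $I_x=[a,b]$ for the hill interval containing $x$ and let $H:=\Gamma_{a-1}=\Gamma_{b+1}$ be its base, so $\Gamma\equiv H+1$ on $[a,b]$ and in particular $\Gamma_x=H+1$. Because $x\in[a,b]$, the running minimum of $\Gamma$ between $x$ and $t$ already reaches the value $H=\Gamma_{a-1}$ as soon as $t$ passes $a-1$ (and $\Gamma_{b+1}$ as soon as $t$ passes $b+1$). From this and the definition of $\E_x$ I would first verify, by a short direct computation, that
\begin{equation*}
\E_x(\Gamma)|_{[0,a-1]}=\E_{a-1}(\Gamma)|_{[0,a-1]},\qquad \E_x(\Gamma)|_{[a-1,b+1]}\equiv 0,\qquad \E_x(\Gamma)|_{[b+1,\infty)}=\E_{b+1}(\Gamma)|_{[b+1,\infty)}.
\end{equation*}
In particular $a-1$ and $b+1$ are zeros of $\E_x(\Gamma)$, so every excursion interval $J_i$ of $\E_x(\Gamma)$ lies entirely in $[0,a-1]$ or entirely in $[b+1,\infty)$.

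Next I would prove the constancy statement. By the left-right symmetry in the display it suffices to treat $J_i=[c_i,d_i]\subseteq[0,a-1]$, where $\E_x(\Gamma)_t=\Gamma_t-M_t$ with $M_t:=\min_{t\le s\le a-1}\Gamma_s$. Here $M$ is nondecreasing (with steps $0$ or $1$), so $\Gamma-\E_x(\Gamma)=M$ is constant on $J_i$ as soon as $M$ is constant there; and the key observation is that $M_{t+1}>M_t$ forces $M_t=\Gamma_t$, i.e. $\E_x(\Gamma)_t=0$. Since the only zeros of $\E_x(\Gamma)$ in $[c_i,d_i]$ are $c_i$ and $d_i$, the only candidate jump of $M$ inside $J_i$ is at $c_i+1$; but a jump there would give $M_{c_i+1}\ge\Gamma_{c_i}+1$ together with $M_{c_i+1}\le\Gamma_{c_i+1}\le\Gamma_{c_i}+1$, hence $\E_x(\Gamma)_{c_i+1}=\Gamma_{c_i+1}-M_{c_i+1}=0$, contradicting $\E_x(\Gamma)>0$ on the (length $\ge 2$) open interval $(c_i,d_i)$. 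Thus $M$ is constant on $J_i$, $\Gamma-\E_x(\Gamma)$ equals $\Gamma_{c_i}=\Gamma_{d_i}$ there, and $\E_x(\Gamma)|_{J_i}$ is exactly $\Gamma|_{J_i}$ shifted down onto the $x$-axis.

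For the hill-interval identity I would argue that the hill intervals of $\Gamma$ are $I_x$ together with those lying inside some $J_i$. Indeed, any hill interval $[a',b']\ne I_x$ has, within the relevant half-line, a strictly lower point to its right (namely $b'+1$, which avoids the saturated region $[a-1,b+1]$), so $\E_x(\Gamma)>0$ on $[a',b']$ and hence $[a',b']$ is contained in some $J_i$; conversely $\E_x(\Gamma)\equiv 0$ on $[a-1,b+1]$ and on the complement of $\bigcup_i J_i$, so no hill of $\E_x(\Gamma)$ can sit there. Since on each $J_i$ the operator merely shifts $\Gamma$ vertically (which preserves hill intervals), this yields $\mathcal{I}(\E_x(\Gamma))=\mathcal{I}(\Gamma)\setminus\{I_x\}$, and in particular $\rho(\E_x(\Gamma))=\rho(\Gamma)-1$ whenever $x$ lies in a hill interval. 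Finally, for $\max\E^{j-1}(\Gamma)\ge 1$: whenever $\max\gamma\ge1$ the rightmost maximum $\m(\gamma)$ is the right end of a hill interval of $\gamma$, so $\E=\E_{\m}$ is of the type just analyzed; iterating the count gives $\rho(\E^{k}(\Gamma))=\rho(\Gamma)-k$ for $0\le k\le\rho(\Gamma)$, so for $1\le j\le\rho(\Gamma)$ the path $\E^{j-1}(\Gamma)$ has at least one hill and therefore height at least $1$ (note this argument does not invoke Lemma~\ref{lemma:columnbyexcursionop}, avoiding circularity).

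I expect the only real content to be the argument in the second paragraph — explaining precisely why the running-minimum $M$ cannot jump at the left endpoint of an excursion interval — everything else being assembly of the pieces. A secondary nuisance is the boundary bookkeeping in the third paragraph (hill intervals abutting $\partial J_i$ or the points $a-1$, $b+1$), which must be dispatched but is routine once the explicit decomposition above is in hand.
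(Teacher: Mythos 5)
Your overall architecture is sound and, for the second and third assertions, essentially matches the paper's: you show $\E_{x}(\Gamma)$ vanishes on $[a-1,b+1]$ and acts as a vertical translate of $\Gamma$ on each remaining piece, so that every hill interval other than $I_{x}$ survives and none is created; the height bound then follows by iterating the count $\rho(\E^{k}(\Gamma))=\rho(\Gamma)-k$, exactly as in the paper. The difference is in how you get the constancy of $\Gamma-\E_{x}(\Gamma)$ on the $J_{i}$: the paper writes down an explicit formula for $\Gamma-\E_{x}(\Gamma)$ in terms of the last- and first-passage times $a_{i}=\max\{k\leq x\st \Gamma_{k}=i\}$, $b_{i}=\min\{k\geq x\st\Gamma_{k}=i\}$, exhibiting intervals of constancy that contain the $J_{i}$, whereas you argue via monotonicity of the running minimum $M$ and try to rule out jumps of $M$ inside a $J_{i}$.

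That is where there is a genuine gap. Your argument hinges on the claim that ``the only zeros of $\E_{x}(\Gamma)$ in $[c_{i},d_{i}]$ are $c_{i}$ and $d_{i}$,'' which is unjustified and in fact false: since $\mathrm{supp}$ is defined as the \emph{closure} of the positivity set, a component $J_{i}$ can contain interior zeros. For example, take $\Gamma$ with heights $0,1,0,1,0,1,0,0,\ldots$ and $x=5$, so $I_{x}=[5,5]$; then $\E_{x}(\Gamma)$ has heights $0,1,0,1,0,0,\ldots$ and its support is the single interval $J_{1}=[0,4]$, which contains the interior zero $t=2$. (The proposition's conclusion still holds there, but your intermediate claim does not, so as written your argument only excludes a jump of $M$ at $c_{i}$ and says nothing about jumps at interior zeros.) The repair is short and uses only the computation you already made at $c_{i}+1$: if $M_{t+1}>M_{t}$ then $M_{t}=\Gamma_{t}$, hence $M_{t+1}=\Gamma_{t}+1\leq\Gamma_{t+1}\leq\Gamma_{t}+1$ forces $\E_{x}(\Gamma)_{t}=\E_{x}(\Gamma)_{t+1}=0$; by linearity $\E_{x}(\Gamma)\equiv 0$ on $[t,t+1]$, so the open interval $(t,t+1)$ is disjoint from $\{\E_{x}(\Gamma)>0\}$ and hence from its closure, and therefore $t$ and $t+1$ cannot both lie in the same $J_{i}$. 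This shows $M$ is constant on (the integers of, hence all of) each $J_{i}$ without any claim about where the zeros sit. With that substitution your proof is complete; a second, purely cosmetic point is that for components to the right of the hill the strictly lower witness is $a'-1$ (to the \emph{left} of the hill interval $[a',b']$), not $b'+1$, though your appeal to symmetry covers this.
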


\begin{proof}
To establish the first part, write $M=\Gamma_{x}\geq 0$, and define integers 
$a_{0} < a_{1}<\cdots<a_{M-1}<a_{M}= x =b_{M}<b_{M-1}<\cdots<b_{1}<b_{0}$ by 
\[
a_{i}=\max\{k\leq x \st \Gamma_{k}=i\},\quad b_{i}=\min\{k\geq x \st \Gamma_{k}=i\}
\]
for each $0\leq i \leq M$. In words, they are the first locations where $\Gamma$ has height $i$ when moving to the left and 
right from $x$; see Figure \ref{Young diagram:column}. To simplify notation, we set $a_{-1}=0$ and  $b_{-1}=\infty$. 
Now $\Gamma_{y}-\E_{x}(\Gamma)_{y}=\min\big\{\Gamma_{z} \st x\wedge y \leq z \leq x \vee y\big\}$, so on $\N_{0}$
\[
\Gamma - \E_{x}(\Gamma) = \sum_{i=0}^{M-1} i \big(\one_{(a_{i-1},a_{i}]} + \one_{[b_{i},b_{i-1})}\big) 
+ M \one_{(a_{M-1},b_{M-1})}.
\]
It follows that $\E_{x}(\Gamma)$ vanishes at the $a_{i}$'s and $b_{i}$'s, and differs from $\Gamma$ by a constant on 
$(a_{M-1},b_{M-1})$ and each interval of the form $(a_{k-1},a_{k}]$ or $[b_{k},b_{k-1})$, $0\leq k \leq M-1$. $J_{i}$ is the 
$i^{\text{th}}$ such interval (from left to right) where $\E_{x}(\Gamma)$ is not constant. This shows the first part of the assertion. 

The preceding argument also implies that $\I(\E_{x}(\Gamma))\subseteq \I(\Gamma)$. In addition, 
$\E_{x}(\Gamma)=0$ on $[a_{M-1},b_{M-1}]$ and $I_{x}\subseteq (a_{M-1},b_{M-1})$, so $I_{x}$ is not a hill interval of 
$\E_{x}(\Gamma)$. Finally, the definition of the $a$ and $b$ terms ensures that if $J\in \I(\Gamma)\setminus \{I_{x}\}$, 
then either $J\subseteq (a_{i-1},a_{i}]$ or $J\subseteq [b_{i},b_{i-1})$ for some $0\leq i \leq M-1$. Since $\E_{x}(\Gamma)$ is a 
vertical translate of $\Gamma$ on these intervals, $J$ must be a hill interval of $\E_{x}(\Gamma)$. This shows 
$\I(\E_{x}(\Gamma)) = \I(\Gamma)\setminus\{I_{x}\}$.
	
Lastly, taking $x=\m$ in the first part gives $\I(\E(\Gamma))= \I(\Gamma) \setminus\{I_{\m}\}$, and the second 
part of the second assertion follows from the first since each application of $\E$ removes a single hill interval and the height of a Motzkin 
path is at least one while hill intervals remain.
\end{proof}

\begin{prop}
\label{prop:pivotonconstantinterval}
For any interval $I\subseteq \R^{+}$, $f\in C_{0}^{+}(I)$, $x,y\in I$, if $f$ is constant on the interval $[x,y]\subseteq I$, then 
$\E_{x}(f)=\E_{y}(f)$. 
\end{prop}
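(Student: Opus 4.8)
The plan is to unwind the definition of $\mathcal{E}_b$ and reduce the claim to an elementary statement about minima of $f$ over subintervals. Since $\mathcal{E}_b(f)(t) = f(t) - \min_{b\wedge t \le s \le b \vee t} f(s)$ and the term $f(t)$ does not involve the pivot, it suffices to show that for every $t \in I$,
\[
\min_{x \wedge t \le s \le x\vee t} f(s) = \min_{y\wedge t \le s \le y \vee t} f(s).
\]
Without loss of generality assume $x \le y$, and let $c$ denote the constant value of $f$ on $[x,y]$; in particular $f(x) = f(y) = c$.

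I would then split into three cases according to the position of $t$ relative to $[x,y]$. If $t \le x$, the pivot-$x$ interval is $[t,x]$ and the pivot-$y$ interval is $[t,y] = [t,x]\cup[x,y]$; since $f \equiv c = f(x)$ on $[x,y]$ and the value $f(x)$ is already attained on $[t,x]$ (at $s=x$), adjoining $[x,y]$ does not lower the minimum, so the two minima agree. If $x \le t \le y$, then $t\in[x,y]$, and both the pivot-$x$ interval $[x,t]$ and the pivot-$y$ interval $[t,y]$ are contained in $[x,y]$, on which $f$ is constant; hence both minima equal $c$. The case $t \ge y$ is symmetric to the first, using instead that $f(y)=c$ is attained at $s=y$, which belongs to $[y,t]$. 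In every case the two minima coincide, so $\mathcal{E}_x(f)(t) = \mathcal{E}_y(f)(t)$ for all $t \in I$, which is the assertion.

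There is essentially no real obstacle here; the only point requiring a little care is the bookkeeping of which of $x$, $y$, $t$ serves as the left and which as the right endpoint of each min-interval, together with the observation that the common value $c$ on the constant stretch $[x,y]$ is realized at an endpoint ($x$ or $y$) that already lies in the smaller of the two intervals, so adjoining or deleting that stretch never changes the minimum.
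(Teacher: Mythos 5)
Your proof is correct and follows exactly the same route as the paper's: the paper likewise reduces the claim to showing $\min_{t\wedge x\leq s \leq t\vee x} f(s) = \min_{t\wedge y\leq s \leq t\vee y} f(s)$ and cases out on whether $t<x$, $x\leq t\leq y$, or $t>y$. Your write-up simply supplies the details the paper leaves implicit.
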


\begin{proof}
Casing out according to whether $t<x$, $x\leq t\leq y$, or $t>y$ shows that 
\[
\min_{t\wedge x\leq s \leq t\vee x} f(s) = \min_{t\wedge y\leq s \leq t\vee y} f(s).\qedhere
\]
\end{proof}

\begin{prop}
\label{prop:hillandexcursioncommute}
For any Motzkin path $\Gamma$ and any $x\in\N$ contained in a hill interval of $\Gamma$, 
$\E_{x}\circ \HH (\Gamma)=\HH\circ \E_{x}(\Gamma)$. 
In particular, $\E\circ \HH (\Gamma)=\HH\circ \E(\Gamma)$.
\end{prop}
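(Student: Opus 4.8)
The plan is to establish the first identity $\E_{x}\circ\HH(\Gamma)=\HH\circ\E_{x}(\Gamma)$ pointwise on $\N_{0}$ (both sides being linear interpolations of integer height sequences, it suffices to compare heights at integer arguments), and then to deduce the ``in particular'' by shifting the pivot of $\E$ from $\m(\HH\Gamma)$ to $\m(\Gamma)$ via Proposition~\ref{prop:pivotonconstantinterval}. For the first identity, write $D_{x}(f)(k)=\min_{x\wedge k\le s\le x\vee k}f(s)$, so that $\E_{x}(f)=f-D_{x}(f)$ by definition, and note $\HH(g)=g-\one_{\bigcup\mathcal{I}(g)}$ since distinct hill intervals are disjoint. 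Let $I_{x}=[a,b]$ be the hill interval of $\Gamma$ containing $x$ and $M=\Gamma_{x}$, so $\Gamma\equiv M$ on $[a,b]$. Proposition~\ref{prop:numberofhills} gives $\mathcal{I}(\E_{x}(\Gamma))=\mathcal{I}(\Gamma)\setminus\{I_{x}\}$, hence
\[
\HH(\E_{x}\Gamma)=\Gamma-D_{x}(\Gamma)-\one_{\bigcup\mathcal{I}(\Gamma)}+\one_{I_{x}},\qquad \E_{x}(\HH\Gamma)=\Gamma-\one_{\bigcup\mathcal{I}(\Gamma)}-D_{x}(\HH\Gamma),
\]
and subtracting, the identity is equivalent to $D_{x}(\Gamma)(k)-D_{x}(\HH\Gamma)(k)=\one_{I_{x}}(k)$ for every integer $k$; writing $[u,v]=[x\wedge k,\,x\vee k]$, this reads $\min_{[u,v]}\Gamma-\min_{[u,v]}\HH\Gamma=\one\{k\in[a,b]\}$.

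The engine for this is the observation that for any integer interval $[c,d]$ one has $\min_{[c,d]}\HH\Gamma=\min_{[c,d]}\Gamma$ unless $[c,d]$ is contained in a single hill interval of $\Gamma$: since $\HH\Gamma\le\Gamma$, if the two minima differ then a minimizing point $s^{*}$ of $\HH\Gamma$ over $[c,d]$ satisfies $\HH\Gamma_{s^{*}}<\Gamma_{s^{*}}$ and so lies in a hill interval $[a',b']$; if $[c,d]\not\subseteq[a',b']$ then $[c,d]$ contains $a'-1$ or $b'+1$, and these boundary points lie in no hill interval and exactly one step below the hill, so (say) $\HH\Gamma_{a'-1}=\Gamma_{a'-1}=\Gamma_{s^{*}}-1=\HH\Gamma_{s^{*}}=\min_{[c,d]}\HH\Gamma$, forcing $\min_{[c,d]}\Gamma\le\min_{[c,d]}\HH\Gamma$, a contradiction. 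Applying this with $[c,d]=[u,v]$: if $k\in[a,b]$ then $x,k\in[a,b]$, so $[u,v]\subseteq I_{x}$, where $\Gamma\equiv M$ and $\HH\Gamma\equiv M-1$, giving difference $1$; if $k\notin[a,b]$ then $[u,v]$ contains $x\in I_{x}$, so by disjointness it could be contained in a hill interval only if that interval were $I_{x}$, which would force $k\in[a,b]$; hence $[u,v]$ lies in no hill interval, the minima agree, and the difference is $0$. This proves the first assertion.

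For the ``in particular'' assume $M:=\max\Gamma\ge1$ (if $M=0$ then $\HH\Gamma=\Gamma=\E(\Gamma)$ and both sides equal $\Gamma$). The maximal plateau of $\Gamma$ at height $M$ containing $\m:=\m(\Gamma)$ is a hill interval with right endpoint $\m$ ($\Gamma$ rises into it from the left and, $\m$ being the rightmost maximum, falls out on the right), so the first part gives $\E_{\m}(\HH\Gamma)=\HH(\E_{\m}\Gamma)=\HH(\E(\Gamma))$, and it remains to see $\E_{\m}(\HH\Gamma)=\E_{\m(\HH\Gamma)}(\HH\Gamma)=\E(\HH\Gamma)$; by Proposition~\ref{prop:pivotonconstantinterval} it suffices that $\HH\Gamma$ be constant on $[\m,\m(\HH\Gamma)]$. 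Since $\HH\Gamma_{\m}=M-1=\max\HH\Gamma$ we have $\m(\HH\Gamma)\ge\m$; if $\HH\Gamma$ were not identically $M-1$ on $[\m,\m(\HH\Gamma)]$ it would dip below $M-1$ and return, producing $j\in(\m,\m(\HH\Gamma)]$ with $\HH\Gamma_{j}=M-1$ and $\HH\Gamma_{j-1}=M-2$; then $\Gamma_{j}=M-1$ (it cannot equal $M$ since $j>\m$), and as $\HH\Gamma_{j}=\Gamma_{j}$ the point $j$ lies in no hill interval of $\Gamma$, yet the maximal height-$(M-1)$ plateau of $\Gamma$ through $j$ lies strictly right of $\m$, so it either abuts $\m$ --- its left neighbour then at height $M$, so it is not a hill interval, hence no point of it (and no point flattened to its height) was lowered, giving $\HH\Gamma_{j-1}=M-1$, a contradiction --- or has both outer neighbours at height $M-2$, so it is itself a hill interval, contradicting that $j$ lies in none. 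This contradiction finishes the proof. I expect this last step to be the crux, namely ruling out that hill-flattening creates a spurious plateau of maximal height strictly between $\m(\Gamma)$ and $\m(\HH\Gamma)$; everything else is bookkeeping with Propositions~\ref{prop:numberofhills} and~\ref{prop:pivotonconstantinterval}.
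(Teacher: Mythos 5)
Your proof is correct and follows essentially the same route as the paper's: reduce the first identity to the claim $\min_{[x\wedge k,\,x\vee k]}\Gamma-\min_{[x\wedge k,\,x\vee k]}\HH(\Gamma)=\one\{k\in I_{x}\}$ and combine it with Proposition~\ref{prop:numberofhills}, then deduce the ``in particular'' from Proposition~\ref{prop:pivotonconstantinterval} via the constancy of $\HH(\Gamma)$ on $[\m(\Gamma),\m(\HH(\Gamma))]$. The only notable difference is that you prove that constancy carefully (and argue the min-claim by contradiction from a minimizer of $\HH(\Gamma)$ rather than from the leftmost minimizer of $\Gamma$), whereas the paper simply asserts it.
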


\begin{proof}
Let $\m=\m(\Gamma)$ and $\m^{\ast}=\m(\HH(\Gamma))$. 
Note that $\m< \m^{\ast}$ and that $\HH(\Gamma)$ is constant on $[\m,\m^{\ast}]$. This holds for any Motzkin path $\Gamma$. Thus by Proposition 
\ref{prop:pivotonconstantinterval} with $I=\R^{+}$, it suffices to prove the first part.
To this end, we first note that for any $k\in \N_{0}$, 
\[
\min_{k\wedge x \leq y \leq k\vee x }\Gamma_{y}-\min_{k\wedge x \leq y \leq k\vee x }\HH(\Gamma)_{y}  
= \one\{k\in I_{x}\},
\]
where $I_{x}$ denotes the hill interval of $\Gamma$ containing $x$. Indeed, $\HH(\Gamma)=\Gamma-1$ on $I_{x}$, so the left-hand side is $1$ for all 
$k\in I_{x}$. 
Now fix $k\notin I_{x}$, and let $x_{\ast}$ be the location of the leftmost minimum of $\Gamma$ over the interval $[k\wedge x , k\vee x]$. 
Then $x_{\ast}$ is an integer which is not contained in any hill interval of $\Gamma$, so 
$\HH(\Gamma)_{x_{\ast}}=\Gamma_{x_{\ast}}$. Moreover, $x_{\ast}$ minimizes $\HH(\Gamma)$ on $[k\wedge x , k\vee x]$ 
since the only integer points with $\HH(\Gamma)_{y}<\Gamma_{y}$ are those contained in a hill interval of $\Gamma$, in which case 
$\Gamma_{y}\geq \Gamma_{x_{\ast}}+1$. This shows that the left-hand side is $0$ for $k\notin I_{x}$ as desired.  

In conjunction with Proposition \ref{prop:numberofhills}, we have  
\begin{eqnarray*}
	\E_{x}(\HH(\Gamma))_{k}
	&=& \HH(\Gamma)_{k} - \min_{k\wedge x \leq y \leq k\vee x} \HH(\Gamma)_{y}\\
	&=& \HH(\Gamma)_{k} - \min_{k\wedge x \leq y \leq k\vee x} \Gamma_{y} + \one\{k\in I_{x}\}\\
	&=& \begin{cases}
		\E_{x}(\Gamma)_{k}-1 & \text{if $k\in \bigcup \I(\Gamma)\setminus \{I_{x}\}$ } \\
		\E_{x}(\Gamma)_{k} & \text{otherwise}
	\end{cases}\\
	&=& \HH(\E_{x}(\Gamma))_{k}.
\end{eqnarray*}
\end{proof}

Now we prove Lemma \ref{lemma:columnbyexcursionop}.

\begin{proof}
[\textbf{Proof of Lemma} \ref{lemma:columnbyexcursionop}]
Let $\Gamma$ be a Motzkin path and write $\lambda_{j}$ for the length of the $j^{\text{th}}$ column of 
$\Lambda(\Gamma)$ for each $1\leq j \leq \rho(\Gamma)$. We show 
\[
\lambda_{j} = \max \E^{j-1}(\Gamma)
\]
by induction on $\max \, \Gamma$. If the maximum is zero, then the assertion is trivial, so we may assume that 
it holds for all Motzkin paths with maximum less than $M\in \N$. Now fix a path $\Gamma$ with $\max \, \Gamma=M$. 
The inductive hypothesis implies that the assertion holds for $\HH(\Gamma)$ since it has maximum $M-1\geq 0$. 
Moreover, $\Lambda(\HH(\Gamma))$ is obtained by deleting the first row of 
$\Lambda(\Gamma)$. Thus by Proposition \ref{prop:hillandexcursioncommute}, we have  
\begin{align*}
\lambda_{j}-1 &= \max \E^{j-1}(\HH(\Gamma)) \\
&= \max \HH(\E^{j-1}(\Gamma))\\
&= \max \E^{j-1}(\Gamma) -1,
\end{align*}   
where the final equality used the second part of Proposition \ref{prop:numberofhills} to ensure 
$\max \E^{j-1}(\Gamma)\geq 1$ for any $1\leq j \leq \rho(\Gamma)$. 
\end{proof}

\begin{remark}\label{remark:lemma_excursion}
\textup{An easy modification of Proposition \ref{prop:hillandexcursioncommute} and applying the same proof of Lemma \ref{lemma:columnbyexcursionop} 
shows that the excursion operator $\E=\E_{\m}$ in the statement of Lemma \ref{lemma:columnbyexcursionop} could be replaced by $\E_{\m^{\ast}}$, 
where the pivot $\m^{\ast}=\m^{\ast}(\Gamma)$ is chosen to be an arbitrary element in the set $\{ x\geq 0\,:\, \Gamma(x) = \max \Gamma \}$ where 
the Motzkin path $\Gamma$ achieves its maximum. }
\end{remark}

\subsection{Regularity of the column length functionals}
\label{subsection:regularity of column length functionals}

In this subsection we prove Lemma \ref{lemma:Lipschitz}, establishing Lipschitz continuity of the `column length functionals' 
$\max \E^{j-1}(\cdot)$. The general strategy is to show that the column length functionals satisfy a Lipschitz condition on Motzkin 
paths and then extend the result to arbitrary functions in $C_{0}^{+}(\R^{+})$ by an approximation argument. We begin by establishing 
some preparatory results. 

\begin{prop}
\label{prop:Lipschitz}
$ $
\begin{description}[leftmargin=!,labelwidth=\widthof{\bfseries [(ii)]}]
	\item[(i)] Fix an interval $I\subseteq \R^{+}$, a point $b\in I$, and functions $f,g\in C_{0}^{+}(I)$. Then
	\[
	\lVert \E_{b}(f) - \E_{b}(g) \rVert_{\infty} \leq 2\lVert f-g \rVert_{\infty}.
	\]
	\item[(ii)] For any Motzkin paths $f,g\in C_{0}^{+}(\R^{+})$,
	\[
	\lVert \HH(f)-\HH(g) \rVert_{\infty}\leq \lVert f-g \rVert_{\infty}.
	\]
\end{description} 
\end{prop}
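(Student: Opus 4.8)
The statement to prove is Proposition~\ref{prop:Lipschitz}; its two parts are independent. Part~(i) is a soft sup-norm estimate, whereas part~(ii) calls for an analysis of how the hill-flattening operator reacts to a perturbation of a Motzkin path, and that is where I expect the real work to be. For (i), the plan is to write $\phi_{f}(t)=\min_{b\wedge t\leq s\leq b\vee t}f(s)$, so that $\E_{b}(f)=f-\phi_{f}$ and hence $\E_{b}(f)-\E_{b}(g)=(f-g)-(\phi_{f}-\phi_{g})$. Since the minimum of a continuous function over a fixed compact interval is $1$-Lipschitz with respect to the sup-norm, one has $|\phi_{f}(t)-\phi_{g}(t)|\leq\sup_{b\wedge t\leq s\leq b\vee t}|f(s)-g(s)|\leq\lVert f-g\rVert_{\infty}$ for every $t\in I$, and the triangle inequality then gives $\lVert\E_{b}(f)-\E_{b}(g)\rVert_{\infty}\leq\lVert f-g\rVert_{\infty}+\lVert\phi_{f}-\phi_{g}\rVert_{\infty}\leq2\lVert f-g\rVert_{\infty}$. (The same formula also shows $\E_{b}(cf)=c\,\E_{b}(f)$ for $c\geq0$, the homogeneity used in Section~\ref{section:Critical}.)

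For (ii), I would first reduce to integer arguments. Writing $f_{k}\in\Z$ for the heights of the underlying Motzkin path and $\delta_{f}(k)=\mathbf{1}\{k\text{ lies in a hill interval of }f\}\in\{0,1\}$, the function $\HH(f)$ is the linear interpolation of $k\mapsto f_{k}-\delta_{f}(k)$; thus $\HH(f)-\HH(g)$ is piecewise linear with integer breakpoints, and so $\lVert\HH(f)-\HH(g)\rVert_{\infty}=|\HH(f)(k)-\HH(g)(k)|$ for some $k\in\N_{0}$, where $\HH(f)(k)-\HH(g)(k)=(f_{k}-g_{k})-\delta_{f}(k)+\delta_{g}(k)$. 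Set $L=\lVert f-g\rVert_{\infty}$, a non-negative integer. If $\delta_{f}(k)=\delta_{g}(k)$ the right side equals $f_{k}-g_{k}$ and there is nothing to prove, so by symmetry it remains to handle $\delta_{f}(k)=1$, $\delta_{g}(k)=0$, where the only value to be excluded is $g_{k}-f_{k}=L$. This is taken care of by the key claim: \emph{if $g_{k}-f_{k}=L$ and $k$ lies in a hill interval $[a,b]$ of $f$, then $k$ lies in a hill interval of $g$}, which contradicts $\delta_{g}(k)=0$. Granting the claim one gets $g_{k}-f_{k}\leq L-1$, hence $-L\leq(f_{k}-g_{k})-1\leq L-1\leq L$, which is the desired bound at $k$.

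To prove the claim I would exploit the rigid geometry of the hill interval of $f$: $f\equiv f_{k}$ on $[a,b]$ and $f_{a-1}=f_{b+1}=f_{k}-1$. Combined with $|f_{j}-g_{j}|\leq L$ and $f_{k}=g_{k}-L$, this yields $g_{a-1},g_{b+1}\leq g_{k}-1$ and $g_{j}\leq g_{k}$ for every $a\leq j\leq b$. Let $[a',b']\subseteq[a,b]$ be the maximal block of consecutive integers containing $k$ on which $g$ is constant equal to $g_{k}$; since the increments of $g$ lie in $\{-1,0,1\}$ we have $g_{a'-1}\geq g_{k}-1$ and $g_{b'+1}\geq g_{k}-1$, while maximality of the block together with the inequalities just obtained forces $g_{a'-1},g_{b'+1}\leq g_{k}-1$. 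Hence $g_{a'-1}=g_{b'+1}=g_{k}-1$ and $g\equiv g_{k}$ on $[a',b']$, i.e.\ $[a',b']$ is a hill interval of $g$ containing $k$. The main obstacle is precisely this claim: the hypothesis that $k$ is \emph{not} in a hill interval of $g$ is very weak, so one has to manufacture a hill interval of $g$ through $k$ out of nothing but the sup-norm bound $L$ and the shape of $f$'s hill interval; once the problem has been localized to the maximal flat block $[a',b']$, the nearest-neighbour slope constraint closes the argument. Everything else — the reduction to integer points, the case split, and part~(i) — is routine bookkeeping.
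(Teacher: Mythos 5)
Your proof is correct. Part (i) is essentially the paper's own argument: write $\E_b(f)=f-\phi_f$ with $\phi_f(t)=\min_{b\wedge t\le s\le b\vee t}f$, note that the minimum over a fixed interval is $1$-Lipschitz for the sup-norm, and apply the triangle inequality.

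For part (ii) you and the paper share the same skeleton---reduce to integer points, note $\HH(f)(k)-\HH(g)(k)=(f_k-g_k)-\delta_f(k)+\delta_g(k)$, and observe that the only danger is the mixed case where exactly one of $f,g$ has a hill interval through $k$---but you resolve the mixed case by a genuinely different device. The paper localizes at the \emph{rightmost maximizer} $x^{\ast}$ of $|\HH(f)-\HH(g)|$ and kills the bad configuration by a somewhat delicate contradiction about the local shape of the unflattened path near $x^{\ast}$ (forcing $f(x^{\ast}+1)=f(x^{\ast})-1$ and then arguing $x^{\ast}$ must after all lie in a hill interval of $f$). You instead localize at any integer where the \emph{pre-flattening} difference is extremal with the dangerous sign, $g_k-f_k=\lVert f-g\rVert_\infty$, and show that the hill interval of $f$ through $k$, the sup-norm constraint, and the nearest-neighbour slope condition together manufacture a hill interval of $g$ through $k$ (your maximal-flat-block argument), so the dangerous configuration cannot occur at all. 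Stating the key claim in terms of $f$, $g$, and $L=\lVert f-g\rVert_\infty$ rather than in terms of the flattened paths makes the extremality easier to exploit and avoids the paper's bookkeeping about which maximizer has been chosen; both routes are valid, and yours is arguably the more transparent. Two small points to make explicit in a write-up: the maximal flat block $[a',b']$ satisfies $a'\ge a\ge 1$, so $g_{a'-1}$ is defined; and $g_k=f_k+L\ge 1$, so the hill interval of $g$ you construct sits at positive height, as the definition requires.
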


\begin{proof}
For (i), the triangle inequality gives  
\[
\lVert \E_{b}(f) - \E_{b}(g) \rVert_{\infty} 
\leq \lVert f-g \rVert_{\infty} + \sup_{t\in I}\bigg| \min_{[t\wedge b, t\vee b]}f -\min_{[t\wedge b, t\vee b]}g \bigg| 
\leq 2\lVert f-g \rVert_{\infty}
\]
since the minima of two functions over a given interval can differ by no more than their maximum difference over the interval.

For (ii), observe that the maximum distance between Motzkin paths is necessarily $\N_{0}$-valued and the claim is 
clearly true if $f=g$, so we may assume that $\left\Vert \HH(f)-\HH(g)\right\Vert _{\infty}\geq1$. Let 
\[
x^{\ast}=\max\big\{ x\in\N\st \big|\HH(f)_{x}-\HH(g)_{x}\big|=\big\Vert \HH(f)-\HH(g)\big\Vert _{\infty}\big\} ,
\]
and assume without loss of generality that $\HH(f)_{x^{\ast}}>\HH(g)_{x^{\ast}}$.
If $x^{\ast}$ is not in a hill interval of $g$, then $g(x^{\ast})=\HH(g)_{x^{\ast}}<\HH(f)_{x^{\ast}}\leq f(x^{\ast})$,
so 
\[
\left\Vert \HH(f)-\HH(g)\right\Vert _{\infty}=\left|\HH(f)_{x^{\ast}}-\HH(g)_{x^{\ast}}\right|
\leq\left|f(x^{\ast})-g(x^{\ast})\right|\leq\left\Vert f-g\right\Vert _{\infty}.
\]
If $x^{\ast}$ is in a hill interval of both $f$ and $g$, then 
\[
\left\Vert \HH(f)-\HH(g)\right\Vert _{\infty}=\left|\HH(f)_{x^{\ast}}-\HH(g)_{x^{\ast}}\right|
=\left|\left(f(x^{\ast})-1\right)-\left(g(x^{\ast})-1\right)\right|\leq\left\Vert f-g\right\Vert _{\infty}.
\]
Finally, suppose that $x^{\ast}$ is in a hill interval $[a,b]$ of $g$ but is not in any hill interval of $f$. Then $g$ is constant 
on $[a,b]$, so our choice of $x^{\ast}$ implies that $f(x^{\ast})\geq f(y)$ for all $y\in[a,b]$. By considering whether or not $
x^{\ast}<b$, we see that we must have $f(x^{\ast}+1)=f(x^{\ast})-1$. A similar consideration of whether $f(x^{\ast})=f(y)$ 
for all $a\leq y\leq x^{\ast}$ leads to the contradiction that $x^{\ast}$ is in a hill interval of $f$. 
\end{proof}	

To state our next result, we say that a function $\varphi:\R\rightarrow\R$ is an \emph{affine scaling} if $\varphi(x)=ax+b$ 
for some $a>0$, $b\in \R$. The set of all affine scalings forms a group under composition. Given $f\in C_{0}^{+}(\R)$ and 
an affine scaling $\varphi$, we write $\varphi^{\ast}(f)$ for the function $f\circ \varphi^{-1}$. 
A function $\Gamma:\R\rightarrow \R^{+}$ is an \emph{extended Motzkin path} if $\Gamma(n)=0$ for all 
$n\leq 0$ and $\Gamma|_{[0,\infty)}$ is a Motzkin path. 

\begin{prop}
\label{prop:Motzkin_approx}
For any $f_{1},f_{2}\in C_{0}^{+}(\R)$ which are not identically zero and any $\eps>0$, there exist affine scalings $\varphi,\psi$ 
and extended Motzkin paths $\Gamma_{1},\Gamma_{2}$ such that $\psi(0)=0$ and for $i=1,2$, the function 
$\bar{f_{i}} = \psi\circ \varphi^{\ast}(\Gamma_{i})\in C_{0}^{+}(\R)$ satisfies
\[
\lVert f_{i}- \bar{f}_{i} \rVert_{\infty}<\eps\quad \text{and}\quad \m(\bar{f}_{i})=\m(f_{i}).
\]
\end{prop}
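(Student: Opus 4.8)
The plan is to approximate each $f_i$ by a piecewise-linear function with vertices on a fine lattice, rescale so that the vertex heights and horizontal spacings become integers, and then repair the resulting lattice path into an honest (extended) Motzkin path without moving its rightmost global maximum. First I would handle the horizontal rescaling: since $f_1,f_2\in C_0(\R)$ are compactly supported and continuous, hence uniformly continuous, for a given $\eps>0$ there is a $\delta>0$ such that $|f_i(x)-f_i(y)|<\eps/4$ whenever $|x-y|<\delta$. Choose an integer $N$ so large that $1/N<\delta$, and let $\varphi(x)=x/N$ be the affine scaling that turns the lattice $\frac1N\Z$ into $\Z$. Then $\varphi^*(f_i)=f_i\circ\varphi$ is, up to an error $<\eps/4$ on each unit interval, well-approximated by its piecewise-linear interpolation through the points $(k,f_i(k/N))$, $k\in\Z$.

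Next I would address the vertical rescaling and the integrality/Motzkin repair. Let $\psi(y)=y/M + b$... actually since we need $\psi(0)=0$ we take $\psi(y)=y/M$ for a large integer $M$; note $\psi$ is a legitimate affine scaling with $\psi(0)=0$, and $\psi\circ\varphi^*(\Gamma_i) = \varphi^*(\Gamma_i)/M$, i.e. the extended Motzkin path read at horizontal spacing $1/N$ and vertical spacing $1/M$. The idea is: after horizontal interpolation, round the heights $f_i(k/N)$ to the nearest multiple of $1/M$ with $M$ large; the rounded piecewise-linear path has vertices in $\frac1N\Z\times\frac1M\Z$ and differs from $f_i$ by at most $\eps/4+1/(2M)$ in sup norm. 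The only remaining issue is that consecutive rounded heights may differ by more than one vertical unit or by a non-unit amount, so the path is not yet a Motzkin path. Here I would subdivide: between two lattice $x$-values where the (rescaled) height changes by an integer amount $d$, insert extra lattice points so that the path ascends or descends in unit steps and then stays flat — this is always possible on a fine enough horizontal lattice, and it only requires a further horizontal rescaling by an integer factor, i.e. replacing $\varphi$ by a composite affine scaling (the group of affine scalings is closed under composition, as noted in the paragraph preceding the proposition). Nonnegativity of $f_i$ on $(-\infty,0]$ being zero guarantees the repaired path is genuinely an \emph{extended} Motzkin path: $\Gamma_i(n)=0$ for $n\le 0$ and $\Gamma_i|_{\N_0}$ a Motzkin path.

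The final point — and the one I expect to be the main obstacle — is ensuring $\m(\bar f_i)=\m(f_i)$, i.e. that the rounding/repair procedure does not shift the location of the rightmost global maximum. The potential danger is that rounding creates a new tie or even a new strictly higher maximum somewhere to the right of the true maximum of $f_i$. To control this I would argue as follows: let $m_i=\m(f_i)$ and let $\Delta_i>0$ be the gap between $\max f_i=f_i(m_i)$ and $\sup\{f_i(x): x > m_i,\ x \notin (m_i-\eta, m_i+\eta)\}$ for a small fixed $\eta$ — more precisely, exploit that $m_i$ is the \emph{rightmost} maximizer so that on any closed interval $[m_i+\eta, \sup\mathrm{supp}(f_i)]$ the function is strictly below $\max f_i$ by some positive amount. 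Taking $M$ large enough that the total rounding error $1/(2M)$ plus the interpolation error is smaller than half this gap forces the repaired path to attain its maximum only near $m_i$; then, by additionally choosing the rounding convention (round the value at the grid point closest to $m_i$ \emph{up}, or simply insert a grid point exactly at $N m_i$ after a harmless further rescaling so that $m_i$ is a lattice point) one pins the rightmost maximum of $\bar f_i$ to be exactly $\m(f_i)$. A symmetric argument rules out a new maximum appearing strictly to the right; and since the same affine scalings $\varphi,\psi$ must work for both $f_1$ and $f_2$ simultaneously, I would simply take the common refinement — the larger of the two $N$'s and $M$'s — at the end. The bookkeeping of choosing $N,M$ and the insertion points consistently for both functions while preserving both $\m$-locations is the delicate part; everything else is routine uniform-continuity estimation.
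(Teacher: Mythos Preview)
Your plan is in the right spirit but misses the two simplifications that make the paper's construction work cleanly, and your handling of the common lattice has a real gap.

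First, the paper avoids any ``repair'' step. It couples the vertical and horizontal scales by taking the vertical stretch factor to be $a = 2\Delta/\eps$, where $\Delta$ is the horizontal mesh size; then $af_i$ varies by at most $a\cdot\eps/4 = \Delta/2$ over each $\Delta$-interval, so after rounding heights via $\ell\mapsto\Delta\lceil af_i(\ell)/\Delta\rceil$ consecutive lattice values automatically differ by $0$ or $\pm\Delta$, and the result is already a Motzkin path. Your subdivide-and-flatten repair would require different subdivision factors at different mesh intervals and for the two functions separately, so packaging it into a \emph{single} affine scaling $\varphi$ common to both $f_1$ and $f_2$ is not as harmless as you suggest.

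Second, and this is the genuine gap: the paper builds the horizontal lattice to contain both $\m(f_1)$ and $\m(f_2)$ from the outset, taking $\mathfrak{L}=\{\m(f_1)+k\Delta\}_{k\in\Z}$ with $\Delta=|\m(f_1)-\m(f_2)|/2^N$ (or $1/2^N$ when the maxima coincide). Your proposal to ``insert a grid point exactly at $Nm_i$ after a harmless further rescaling'' and then ``take the common refinement --- the larger of the two $N$'s'' cannot work in general: if $\m(f_1)$ and $\m(f_2)$ are incommensurable, no lattice of the form $\tfrac{1}{N}\Z$ contains both, and no integer dilation fixes this. An affine \emph{shift}, not just a dilation, is required. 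Once both $\m(f_i)$ are lattice points, monotonicity of the ceiling map guarantees that each $\m(f_i)$ is a maximizer of $\bar f_i$; your gap argument via $\Delta_i$ is then unnecessary (and as written it also fails to control ties created by rounding at lattice points in $(m_i,m_i+\eta)$, where $f_i$ can be arbitrarily close to its maximum).
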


\begin{proof}
By hypothesis, $\m(f_{1}),\m(f_{2})\in (0,\infty)$. Also, the $f_{i}$'s are uniformly continuous, so there is some 
$\delta>0$ such that $|x-y|<\delta$ implies $|f_{1}(x)-f_{1}(y)|, |f_{2}(x)-f_{2}(y)|<\eps/4$. Set 
$\mathtt{s}=|\m(f_{1})-\m(f_{2})|+\one\{\m(f_{1})=\m(f_{2})\}$ and choose $N$ large enough that 
$\Delta:=\mathtt{s}/2^{N}<\delta$. Define the lattice 
\[
\LL=\{\m(f_{1})+k\Delta\}_{k\in \Z}.
\]
Note that $\m(f_{1}),\m(f_{2})\in \LL$. Set $a=2\Delta/\eps$, $\LL^{+}=\LL\cap [0,\infty)$,  
and let $\ell_{0}$ denote the smallest element of $\LL^{+}$. Observe that $0\leq \ell_{0}<\Delta$ by construction.

For $i=1,2$, define the function $\gamma_{i}:\LL\rightarrow \LL^{+}$ by 
\[
\gamma_{i}(\ell) = 
\begin{cases}
\ell_{0} & \text{if $\ell\leq \ell_{0}$} \\
\Delta\lceil (af_{i}(\ell))/\Delta \rceil + \ell_{0} & \text{otherwise}.
\end{cases}
\]
Note that $af_{i}$ changes by no more than $\Delta/2$ when the argument changes by no more than $\Delta$. In conjunction with 
the fact that $f_{i}\equiv 0$ on $(-\infty,0]$, $f_{i}\geq 0$, and $\ell_{0}\in [0,\Delta)$, this implies that $\gamma_{i}$ is an 
extended Motzkin path on $\LL$. That is, $\gamma_{i}(\ell)=\ell_{0}$ for all $\ell\in \LL\cap (-\infty,\ell_{0}]$ 
and for each $\ell,\ell'\in \LL$ with $|\ell-\ell'|= \Delta$, we have $\gamma_{i}(\ell)\geq \ell_{0}$ and 
$|\gamma_{i}(\ell)-\gamma_{i}(\ell')|\in \{0,\Delta\}$. 

Let $\varphi(x)=\Delta \cdot x+\ell_{0}$. Then $\varphi$ is an affine scaling which maps $\Z$ bijectively to $\LL$. Also 
define the affine scaling $\sigma(x)=(x-\ell_{0})/a$. By a slight abuse of notation, we will henceforth let $\gamma_{i}$ denote its 
extension to $\R$ by linear interpolation. Let $\Gamma_{i}\in C_{0}^{+}(\R)$ be the extended Motzkin path defined by 
$\Gamma_{i}=\varphi^{-1}\circ \gamma_{i}\circ \varphi$. 
Now define
\[
\bar{f_{i}}=(\gamma_{i}-\ell_{0})/a = \sigma\circ \varphi\circ \Gamma_{i} \circ \varphi^{-1} = \psi\circ \varphi^{\ast}(\Gamma_{i})
\]
where $\psi(x)=\sigma\circ \varphi(x)=\frac{\eps}{2}x$. Then $\psi(0)=\sigma(\ell_{0})=0$ and $\m(\bar{f}_{i})=\m(\gamma_{i})=\m(f_{i})$. 
For $x\in\LL$, a direct computation gives $|f_{i}(x)-\bar{f}_{i}(x)|<\eps/2$. For $x\notin\LL$, writing $\ell_{x}$ 
for the nearest lattice point to $x$ gives
\[
|f_{i}(x)-\bar{f}_{i}(x)|\leq |f_{i}(x)-f_{i}(\ell_{x})|+|f_{i}(\ell_{x})-\bar{f}_{i}(\ell_{x})|+|\bar{f}_{i}(\ell_{x})-\bar{f}_{i}(x)|
<\frac{\eps}{4} + \frac{\eps}{2} + \frac{\Delta}{2a} = \eps,
\]
hence $\lVert f_{i}- \bar{f}_{i} \rVert_{\infty}<\eps$ as desired.
\end{proof}

We are now ready to prove Lemma \ref{lemma:Lipschitz}.

\begin{proof}[\textbf{Proof of Lemma \ref{lemma:Lipschitz}}.]
Fix $j\geq 1$. To begin, we observe that it is enough to show the assertion for $I=\R$. Indeed, for any $I\subseteq\R$ and any 
$h\in C_{0}^{+}(I)$, we can define a function $\tilde{h}\in C_{0}^{+}(\R)$ which equals $h$ on $I$ and drops linearly to zero on $[b,b+1]$ 
where $b$ is the rightmost boundary point of $I$. This construction ensures that $\max\E^{j-1}(h)=\max\E^{j-1}(\tilde{h})$ and 
$\lVert h_{1}-h_{2} \rVert_{\infty}=\lVert \tilde{h}_{1}-\tilde{h}_{2} \rVert_{\infty}$.

Next we show that the result holds if the graphs of $f$ and $g$ are (extended) Motzkin paths by induction on 
$m=\max \E^{j-1}(f)+\max \E^{j-1}(g)$. The assertion is trivial when $j=1$ or $m=0$. If $\max \E^{j-1}(f)\geq 1$ and 
$\max \E^{j-1}(g)=0$, write $\m_{j}:=\m(\E^{j-1}(f))$. Let $J=[a,b]$ be the excursion interval of $\E^{j-1}(\Gamma)$ 
which contains $\m_{j}$. By Proposition \ref{prop:numberofhills}, $\Gamma-\E^{j-1}(\Gamma)$ is 
constant on the excursion intervals of $\E^{j-1}(\Gamma)$. 
Hence we get 
\[
\max \E^{j-1}(f) = \E^{j-1}(f)(\m_{j}) - \E^{j-1}(f)(a) = f(\m_{j}) - f(a).
\]
As $f,g\geq 0$, consideration of whether or not $g(\m_{j})\geq g(a)$ shows that
\begin{align*}
\max \E^{j-1}(f) & =f(\m_{j}) - f(a) \leq   f(\m_{j}) - f(a) + \left|g(\m_{j})-g(a)\right| \\
&  \leq \left|f(\m_{j})-g(\m_{j})\right| +  \left|f(a)-g(a)\right| \leq 2\lVert f-g \rVert_{\infty},
\end{align*}
By symmetry, the result also holds when $m\geq 1$ and $\max \E^{j-1}(f)=0$, so we may assume that both 
$\max \E^{j-1}(f)$ and $\max \E^{j-1}(g)$ are at least 1. As the maxima are necessarily 
attained on hill intervals, Proposition \ref{prop:hillandexcursioncommute}, the inductive hypothesis, and part (ii) of Proposition 
\ref{prop:Lipschitz} imply 
\begin{align*}
\left|\max\E^{j-1}(f)-\max\E^{j-1}(g)\right| & =\left|\max\HH\circ\E^{j-1}(f)-\max\HH\circ\E^{j-1}(g)\right|\\
& =\left|\max\E^{j-1}\circ\HH(f)-\max\E^{j-1}\circ\HH(g)\right|\\
& \leq 2\left\Vert \HH(f)-\HH(g)\right\Vert _{\infty}\leq 2\left\Vert f-g\right\Vert _{\infty}.
\end{align*}
This completes the proof for Motzkin paths. 

Now we show the assertion for $f,g\in C_{0}^{+}(\R)$ by induction on $j\geq 1$. The base case is tautological. For the inductive step, 
choose $\psi,\varphi,\Gamma_{1},\Gamma_{2},\bar{f}$, $\bar{g}$ as in Proposition \ref{prop:Motzkin_approx} with 
$f_{1}=f,f_{2}=g$. Then by the choice of $\bar{f}$, Proposition \ref{prop:hillandexcursioncommute}, the inductive hypothesis, 
and Proposition \ref{prop:Lipschitz} (i), we have
\begin{align*}
\left| \max\E^{k}(f) - \max\E^{k}(\bar{f}) \right|
& = \left| \max\E^{k-1}\circ \E_{\m(f)}(f) - \max\E^{k-1}\circ \E_{\m(f)}(\bar{f}) \right| \\
& \leq \lVert\E_{\m(f)}(f)-\E_{\m(f)}(\bar{f})  \rVert_{\infty} \\
& \leq  2\lVert f-\bar{f}  \rVert_{\infty} < 2\eps,
\end{align*}
and similarly for $g$.
Also, since $\psi(0)=0$, the triangle inequality gives
\begin{align*}
\psi( \lVert \Gamma_{1} - \Gamma_{2}\rVert_{\infty})
& = \psi(\lVert \varphi^{\ast}\Gamma_{1}-\varphi^{\ast}\Gamma_{2}\rVert_{\infty}) \\
& = \lVert \psi\circ\varphi^{\ast}\Gamma_{1}-\psi\circ\varphi^{\ast}\Gamma_{2}\rVert_{\infty} \\
& =\lVert \bar{f}-\bar{g}\rVert_{\infty} <  4\eps + \lVert f-g \rVert_{\infty}.
\end{align*}

Lastly, observe that the functional $\max \E^{k}$ satisfies
\[
\max  \E^{k}(\bar{f}_{i}) = \psi\circ \max \E^{k}(\Gamma_{i}). 
\]
Thus in conjunction with the assertion for the Motzkin paths, we obtain 
\begin{align*}
\left| \max\E^{k}(f) - \max\E^{k}(g) \right| 
& < 4\eps +  \left| \max\E^{k}(\bar{f}) - \max\E^{k}(\bar{g}) \right|   \\
& \leq 4\eps +  \psi\Big( \Big| \max\E^{k}(\Gamma_{1}) - \max\E^{k}(\Gamma_{2}) \Big| \Big)  \\ 
& \leq 4\eps + \psi\left(\lVert \Gamma_{1} - \Gamma_{2} \rVert_{\infty}\right) < 8\eps + \lVert f - g \rVert_{\infty}.
\end{align*} 
Letting $\eps \searrow 0$ completes the inductive step and the proof.
\end{proof}

\subsection{Statistics of 312-avoiding permutations}
\label{subsection:statistics of 312-avoiding permutations}

In this subsection, we provide proofs of Propositions \ref{prop:bijection_312-Dyck} and \ref{prop:permutation_Greene}. 

Recall that for each $n\geq 1$ and permutation $\tau\in \ess_{3}$ of length 3, we denote by $\ess_{n}^{\tau}$ the set of all $\tau$-avoiding 
permutations of length $n$. Also recall that $\text{Dyck}_{2n}$ denotes the set of all Dyck paths of length $2n$. Note that a permutation 
$\sigma$ is $312$-avoiding iff its inverse $\sigma^{-1}$ is $231$-avoiding. Also, if we denote by $\cev{\sigma}$ the \textit{reversal} of 
$\sigma$ obtained by reading $\sigma$ from right to left, then $\sigma$ is $231$-avoiding iff its reversal $\cev{\sigma}$ is $132$-avoiding. 

There are a number of bijections between $\tau$-avoiding permutations and Dyck paths in the literature. For instance, Krattenthaler 
\cite{krattenthaler2001permutations} obtained a bijection $\ess_{n}^{132}\rightarrow \text{Dyck}_{2n}$, and later Hoffman, Rizzolo, and Silvken 
\cite{hoffman2015pattern} used a bijection $\text{Dyck}_{2n}\rightarrow \ess_{n}^{231}$ to study random $231$-avoiding permutations in terms 
of random walks and Brownian excursions. In fact, the inverse of the latter bijection is the conjugation of the former by reversals of permutations 
and Dyck paths, where the reversal of a Dyck path is its left-right mirror image. In the forthcoming proof of Proposition \ref{prop:bijection_312-Dyck}, 
we will make use of the bijection $\text{Dyck}_{2n}\rightarrow \ess_{n}^{231}$ mentioned above, which we give below in a slightly more general version. 

For a given $h$-restricted Motzkin path $\Gamma$, we define a permutation $\sigma(\Gamma)$ as follows: Let $v_{k}$ be the location of the 
$k^{\text{th}}$ upstroke of $\Gamma$. (Thus if $\Gamma=\Gamma(X_{0})$, then $v_{k}$ is the location of the $k^{\text{th}}$ ball in $X_{0}$). 
Then we define a $231$-avoiding permutation $\sigma(\Gamma)$ by 
\begin{equation}
\label{eq:permutation_Motzkin}
\begin{aligned}
\sigma(\Gamma)(k)&= k + \frac{1}{2}\inf\big\{ r\geq 0 \st \Gamma_{v_{k}+r}  = \Gamma_{v_{k}-1} \big\} - \Gamma_{v_{k}} \\
&= k + \frac{1}{2}\sup\big\{ r\geq 0 \st \Gamma_{v_{k}+r}  \geq \Gamma_{v_{k}} \big\} +1 - \Gamma_{v_{k}}. 
\end{aligned} 
\end{equation}
When restricted to Dyck paths, this map $\digamma\mapsto \sigma(\digamma)$ is shown to be a bijection between $\textup{Dyck}_{2n}$ and 
$\ess^{231}_{n}$ in \cite[Thm. 4.3]{hoffman2015pattern}. 

\begin{figure*}[h]
	\centering
	\includegraphics[width=.95 \linewidth]{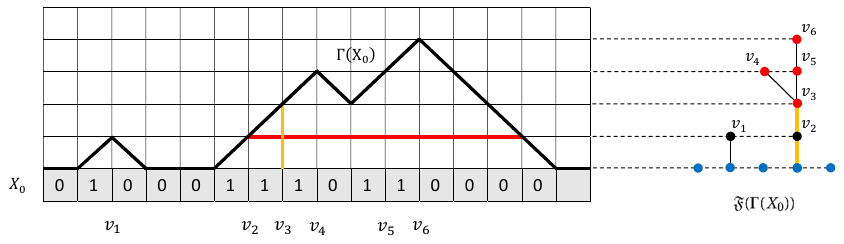}
	\caption{Construction of the inverse $1\,6\,5\,2\,4\,3$ of the 312-avoiding permutation $\sigma(X_{0})=1\,4\,6\,5\,3\,2$ directly from the 
		    corresponding Motzkin path $\Gamma(X_{0})$ and rooted forest $\F(\Gamma(X_{0}))$. On the left, the lengths of the red and orange 
		    paths correspond to the supremum and $\Gamma_{v_{k}}$ terms in \eqref{eq:permutation_Motzkin} for $k=3$. On the right, the subtree 
		    rooted at $v_{3}$ consists of the four red nodes and the level of $v_{3}$ is the number of edges in the orange path. Thus 
		    \eqref{eq:permutation_Motzkin} and \eqref{eq:permutation_Forest} each show that $\sigma^{-1}(3)=5$.}
	\label{fig:permutation}
\end{figure*}

\begin{remark}
\textup{
For a given rooted forest $\F$, a permutation $\sigma(\F)$ can be defined similarly: Let $v_{k}$ be the $k^{\text{th}}$ non-root node in $\F$ according 
to the depth-first order and define  
\begin{equation}
\label{eq:permutation_Forest}
\sigma(\F)(k) := k + \#{\{ \text{nodes in the subtree of $\F$ rooted at $v_{k}$} \} } - \text{(level of $v_{k}$ in $\F$)}.
\end{equation}
Note that the maps \eqref{eq:permutation_Forest} and \eqref{eq:permutation_Motzkin} yield the same permutation for corresponding rooted forest and 
its contour process. Namely, let $\Gamma$ be the $h$-restricted Motzkin path which is a contour process of $\F$. Then $\sigma(\Gamma) = \sigma(\F)$; 
see Figure \ref{fig:permutation} for an illustration. 
}
\end{remark}

\begin{proof}
[\textbf{Proof of Proposition \ref{prop:bijection_312-Dyck}}.] 
We define a map $\varphi:\textup{Dyck}_{2n}\rightarrow \ess^{312}_{n}$ by
\[
\digamma \mapsto \sigma(\digamma) \mapsto \sigma(\digamma)^{-1},
\]
where the first map is given by \eqref{eq:permutation_Motzkin}. As a composition of two bijections, $\varphi$ is a bijection from 
$\textup{Dyck}_{2n}$ to $\ess^{312}_{n}$. This shows (i). 

To show (ii), fix $\digamma\in \textup{Dyck}_{2n}$ and let $X_{0}$ be the box-ball configuration obtained from $\digamma$ by 
\[
X_{0}(i) = \one\big\{\digamma(i+1)-\digamma(i)=1\big\}
\]
for all $i\geq 0$. It then suffices to show that
\begin{equation}
\label{eq:pf_permutation_appendix}
\sigma(X_{0})= \sigma(\digamma)^{-1}. 
\end{equation} 	
To this end, label the balls $1,\ldots,n$ from left to right, and recall the push-pop stack construction $X_{0}\mapsto \sigma(X_{0})$ described in Section 
\ref{section:Permutation}. Fix a label $1\leq k \leq n$. We are going to track the trajectory of ball $k$ during the push-pop stack construction. 
Using the notation from Equation \eqref{eq:permutation_Motzkin}, let ball $k$ be at site $v_{k}$. Note that $\digamma_{v_{k}}$ equals the number of balls 
in the stack after ball $k$ is pushed onto it. Hence the number of balls which have been popped off in previous steps equals $k-\digamma_{v_{k}}$. 
Next, while the stack sweeps sites to the right of $v_{k}$, balls with larger labels will be pushed on and popped off until ball $k$ is finally deposited. This 
happens precisely when $\digamma$ first hits height $\digamma_{v_{k}}-1$ after location $v_{k}$. Accordingly, the number of balls that are deposited 
during the period when ball $k$ is in the stack equals the height of the subexcursion of $\digamma$ started at $v_{k}$, which equals to half of the duration 
of this excursion. Thus 
\[
\text{$\#$ balls popped out before ball $k$} = k - \digamma_{v_{k}} + \frac{1}{2}\sup\big\{ r\geq 0 \st \digamma_{v_{k}+r}  
\geq \digamma_{v_{k}} \big\}. 
\]
Therefore, $\sigma(\digamma)(k)$, which is one more than the above quantity, is the position of $k$ in $\sigma(X_{0})$ as desired. 
\end{proof} 

\begin{proof}[\textbf{Proof of Proposition \ref{prop:permutation_Greene}.}]	
Before we begin, recall the definition of the longest `leftmost'  increasing and `rightmost' decreasing subsequences $\tau_{+}^{\ast}$ and $\tau_{-}^{\ast}$ 
given above the statement of Proposition \ref{prop:permutation_Greene}.
	
We first show the assertion for $\tau_{+}^{\ast}$. By induction on the length of the permutation, we suppose that the assertion holds for all $312$-avoiding 
permutations of length less than $n$ for some $n\geq 3$, and fix a $312$-avoiding permutation $\tau$ of length $n$. (The result is true by inspection when 
$n=3$.) Using Proposition \ref{prop:bijection_312-Dyck}, choose a box-ball configuration $X_{0}$ and a Dyck path $\digamma$ such that $\tau=\sigma(X_{0})$ 
and $\digamma=\Gamma(X_{0})$.   

By Greene's theorem (\cite{greene1982extension}), we know that the length of the first row of $\mathtt{RS}(\tau)$ equals the length of any longest 
increasing subsequence in $\tau$. Since  $\mathtt{RS}(\tau)=\Lambda(\digamma)$, we see that the length of the longest increasing subsequence of 
$\tau$ equals the number of peaks in $\digamma$.    

Let $X_{0}'$ be the box-ball configuration obtained from $X_{0}$ by deleting all $1\,0$ patterns from $X_{0}$, as in the proof of Lemma 
\ref{lemma:invariance_Youngdiagram}, and let $\Gamma'=\Gamma(X_{0}')$ and $\tau'=\sigma(X_{0}')$ be the $h$-restricted Motzkin path and 
$312$-avoiding permutation constructed from $X_{0}'$ (see the commutative diagram \eqref{diagram}). It is easy to see that $\Gamma'$ can be directly 
obtained from $\Gamma$ by first applying the hill-flattening operator $\HH$ and then contracting new $h$-strokes which are not at height $0$. 

On the other hand, let $L$ be the number of $1\,0$ patterns in $X_{0}$, which is the same as the number of peaks in $\Gamma$. When reading $X_{0}$ 
from left to right, let $\ell_{i}$ be the label of the ball that corresponds to the `1' in the $i^{\text{th}}$ $1\,0$ pattern. 
Then $\tilde{\tau}:=\ell_{1}\ell_{2}\cdots\ell_{L}$ is an increasing subsequence in $\sigma$ satisfying $\tau'=\tau\setminus \tilde{\tau}$. Moreover, 
Greene's theorem shows that this is a longest increasing subsequence. 
By an easy induction argument, one sees that $\ell_{i+1}$ is the first number to the right in $\sigma$ that exceeds $\ell_{i}$. Thus by definition, 
$\tilde{\tau}=\tau_{+}^{\ast}$, is the `leftmost' longest increasing subsequence in $\sigma$. From the construction it is clear that 
$\tau' = \tau\setminus \tau^{\ast}_{+}$. 

\vspace{-0.2cm}
\begin{equation}
\label{diagram}
\begin{gathered}
\xymatrix{   
	\tau \ar@{..>}[d] & \ar[l] X_{0} \ar[r] \ar[d] & \Gamma \ar@{..>}[d] \\
	\tau' & \ar[l] X_{0}' \ar[r]  & \Gamma' 
}
\end{gathered}
\end{equation}	

To complete the argument, recall that $\Lambda(\Gamma')$ is obtained from $\Lambda(\Gamma)$ by deleting the first row. Since 
$\mathtt{RS}(\tau)=\Lambda(\Gamma)$ and $\mathtt{RS}(\tau')=\Lambda(\Gamma')$ by Proposition \ref{prop:permutation_Motzkin}, we have that 
$\mathtt{RS}(\tau')$ is obtained from $\mathtt{RS}(\tau)$ by deleting its first row. Since $\tau'$ can be obtained from $\tau$ by deleting a longest 
increasing subsequence, the inductive hypothesis applied to $\tau'$ completes the proof.  

Next, we show the assertion for the columns. Let $\tau$, $X_{0}$, $\Gamma$ be as before. To begin, observe that in the stack construction of $\tau$ 
from $X_{0}$, every decreasing subsequence in $\tau$ is generated by the balls that occupy the stack at the same time. (For instance, in Figure 
\ref{fig:stack}, the decreasing subsequence $432$ in $\sigma=14632$ is generated by the balls in the stack on top of the ball of label 4.) Thus, every 
longest decreasing subsequence in $\tau$ is generated by the balls in the stack where $\Gamma$ achieves its maximum. 

Let $\m^{\ast}$ be any location where $\Gamma$ attains its global maximum. During the stack operation to construct $\tau$ from $X_{0}$, let 
$\bar{\tau} = \ell_{1}\ell_{2}\cdots\ell_{M}$ be the decreasing sequence consisting of the numbers in the stack after pushing all the balls over the interval 
$[1,\m^{\ast}]$. This is a longest decreasing subsequence in $\tau$. Denote $\tau^{\dagger} = \tau\setminus \tau^{\ast}_{-}$. Let $X^{\dagger}_{0}$ 
be the box-ball configuration that is obtained by converting $1$'s that correspond to balls with labels in $\bar{\tau}$ to $0$'s. Then observe that 
$\sigma(X^{\dagger}_{0}) = \tau^{\dagger}$ and $\Gamma^{\dagger} = \E_{\m^{\ast}}(\Gamma)$, where $\E_{\m^{\ast}}$ is the excursion operator 
pivoted at location $\m^{\ast}$ instead of the rightmost one $\m$. According to Lemma \ref{lemma:columnbyexcursionop} and the following remark, 
$\Lambda(\E_{\m^{\ast}}(\Gamma))$ is obtained by deleting the first column of $\Lambda(\Gamma)$. 
Since $\Lambda(\E_{\m^{\ast}}(\Gamma)) = \mathtt{RS}(\tau)$ and $\Lambda(\Gamma)= \mathtt{RS}(\tau^{\dagger})$, the assertion follows.        
\end{proof}

\section*{Acknowledgments}

We thank Karthik Karnik, Thomas Lam, Yuval Peres, Pavlo Pylyavskyy, and Mikaeel Yunus for inspiring conversations.

\vspace{0.3cm}

\small{
	\bibliographystyle{plain}
	\bibliography{mybib}
}

\end{document}